\providecommand\@dotsep{5}
\def\listtodoname{List of Todos}
\def\listoftodos{\@starttoc{tdo}\listtodoname}
\numberwithin{equation}{section}
\newcommand{\h}{H^{s}_{\e}}
\newcommand{\R}{\mathbb{R}}
\newcommand{\2}{2^{*}_{s}}
\newcommand{\C}{\mathbb{C}}
\newcommand{\N}{\mathcal{N}}
\DeclareMathOperator{\dive}{div}
\DeclareMathOperator{\supp}{supp}
\DeclareMathOperator{\e}{\varepsilon}
\newtheorem{prop}{Proposition}[section]
\newtheorem{lem}{Lemma}[section]
\newtheorem{thm}{Theorem}[section]
\newtheorem{cor}{Corollary}[section]
\newtheorem{remark}{Remark}[section]
\keywords{Fractional magnetic operators, Schr\"odinger-Poisson equation, critical exponent, variational methods}
\subjclass[2010]{35A15, 35R11, 35S05, 58E05.}
\date{}
\begin{document}
\title[Critical fractional Schr\"odinger-Poisson equations with magnetic fields]{Multiplicity and concentration results for  fractional Schr\"odinger-Poisson equations with magnetic fields and critical growth}

\author[V. Ambrosio]{Vincenzo Ambrosio}
\address{Vincenzo Ambrosio\hfill\break\indent 
 Dipartimento di Scienze Matematiche, Informatiche e Fisiche \hfill\break\indent
 Universit\`a di Udine \hfill\break\indent
 via delle Scienze 206 \hfill\break\indent
 33100 Udine, Italy}
\email{vincenzo.ambrosio2@unina.it}

\begin{abstract}
We deal with the following fractional Schr\"odinger-Poisson equation with magnetic field
\begin{equation*}
\varepsilon^{2s}(-\Delta)_{A/\varepsilon}^{s}u+V(x)u+\e^{-2t}(|x|^{2t-3}*|u|^{2})u=f(|u|^{2})u+|u|^{\2-2}u \quad \mbox{ in } \mathbb{R}^{3},
\end{equation*}
where $\varepsilon>0$ is a small parameter, $s\in (\frac{3}{4}, 1)$, $t\in (0,1)$, $\2=\frac{6}{3-2s}$ is the fractional critical exponent, $(-\Delta)^{s}_{A}$ is the fractional magnetic Laplacian, $V:\R^{3}\rightarrow \R$ is a positive continuous potential, $A:\mathbb{R}^{3}\rightarrow \mathbb{R}^{3}$ is a smooth magnetic potential and $f:\mathbb{R}\rightarrow \mathbb{R}$ is a subcritical nonlinearity. 
Under a local condition on the potential $V$, we study the multiplicity and concentration of nontrivial solutions as $\e\rightarrow 0$.
In particular, we relate the number of nontrivial solutions with the topology of the set where the potential $V$ attains its minimum. 
\end{abstract}

\maketitle

\section{introduction}
In this paper we are concerned with the following fractional nonlinear Schr\"odinger-Poisson equation
\begin{equation}\label{P}
\varepsilon^{2s}(-\Delta)_{A/\varepsilon}^{s}u+V(x)u+\e^{-2t}(|x|^{2t-3}*|u|^{2})u=f(|u|^{2})u+|u|^{\2-2}u \quad \mbox{ in } \mathbb{R}^{3},
\end{equation}
where $\e>0$ is a parameter, $s\in (\frac{3}{4},1)$, $t\in (0, 1)$, $\2=\frac{6}{3-2s}$ is the fractional critical exponent, $V\in C(\R^{3}, \R)$ and $A\in C^{0,\alpha}(\R^{3},\R^{3})$, $\alpha\in(0,1]$,  are the electric and magnetic potentials respectively. Here the fractional magnetic Laplacian $(-\Delta)^{s}_{A}$ is defined, whenever $u\in C^{\infty}_{c}(\R^{3}, \C)$, as
\begin{equation}\label{operator}
(-\Delta)^{s}_{A}u(x)
:=c_{3,s} \lim_{r\rightarrow 0} \int_{B_{r}^{c}(x)} \frac{u(x)-e^{\imath (x-y)\cdot A(\frac{x+y}{2})} u(y)}{|x-y|^{3+2s}} dy,
\quad
c_{3,s}:=\frac{4^{s}\Gamma\left(\frac{3+2s}{2}\right)}{\pi^{3/2}|\Gamma(-s)|},
\end{equation}
and it has been recently considered in \cite{DS}.
The motivations that led to its introduction are mainly analyzed in \cite{DS, I10} and rely essentially on the L\'evy-Khintchine formula for the generator of a general L\'evy process.
As stated in \cite{SV}, this operator can be seen as the fractional counterpart of the magnetic Laplacian $-\Delta_{A}:=\left(\frac{1}{\imath}\nabla-A\right)^{2}$ given by
$$
-\Delta_{A} u= -\Delta u -\frac{2}{\imath} A(x) \cdot \nabla u + |A(x)|^{2} u -\frac{1}{\imath} u \dive(A(x));
$$ 
see \cite{LaL, LL, RS} for more details.
We recall that the magnetic Laplacian arises in the study of the following Schr\"odinger equation with magnetic field 
\begin{equation}\label{MSE}
-\Delta_{A} u+V(x)u=f(x, |u|^{2})u \quad \mbox{ in } \R^{N},
\end{equation}
for which a lot of interesting existence and multiplicity results have been established; see for instance \cite{AFF, AFY, AS, Cingolani, CS, DL, EL, K} and references therein.\\

In the nonlocal framework, only few and recent works deal with fractional magnetic Schr\"odinger equations like
\begin{equation}\label{MSE}
\e^{2s}(-\Delta)^{s}_{A}u+V(x)u=f(x, |u|^{2})u \quad \mbox{ in } \R^{N}.
\end{equation}
For instance, d'Avenia and Squassina \cite{DS} studied the existence of ground state to \eqref{MSE} when $\e=1$, $V$ is constant and $f$ is a subcritical or critical nonlinearity. Fiscella et al. \cite{FPV} proved the multiplicity of nontrivial solutions for a fractional magnetic problem with homogeneous boundary conditions. 
Zhang et al. \cite{ZSZ} obtained the existence of mountain pass solutions 
which tend to the trivial solution as $\e\rightarrow 0$ 
for a fractional magnetic Schr\"odinger equation involving critical frequency and critical growth.
In \cite{AD} the author and d'Avenia dealt with the existence and the multiplicity of solutions to \eqref{MSE} for small $\e>0$ when the potential $V$ satisfies \eqref{RVC} and $f$ has a subcritical growth. 

In absence of magnetic field (that is $A\equiv 0$), the fractional magnetic Laplacian $(-\Delta)^{s}_{A}$ reduces to the well-known fractional Laplacian $(-\Delta)^{s}$ which has achieved a tremendous popularity in these last twenty years due to its great applications in several contexts such as phase transitions, quasi-geostrophic flows, game theory, population dynamics, quantum mechanics and so on; see \cite{BuV, DPV, MBRS} for more details.
From a mathematical point of view,  several contributions \cite{A1, A0, A3, CN, FMV, FQT, Secchi1} have been given in the investigation of fractional Schr\"odinger equations like
\begin{equation}\label{FSE}
\e^{2s}(-\Delta)^{s}u+V(x)u=f(x,u) \mbox{ in } \R^{N},
\end{equation} 
which plays a crucial role in fractional quantum mechanics; see \cite{Laskin} and the appendix in \cite{DDPDV} for a more detailed physical interpretation.
In particular way, a special attention has been devoted to concentration phenomena of solutions to \eqref{FSE} as $\e\rightarrow 0$.
For instance, D\'avila et al. \cite{DDPW}, via a Lyapunov-Schmidt variational reduction, studied solutions to \eqref{FSE} with a spike pattern concentrating around a finite number of points in space as $\e\rightarrow 0$, when $V$ is a bounded sufficiently smooth potential and $f(u)=u^{p}$ with $p\in (1, \2-1)$.
Shang and Zhang \cite{SZ} dealt with the existence and multiplicity of solutions for a critical fractional Schr\"odinger equation requiring that the involved potential $V$ verifies the following condition due to Rabinowitz \cite{Rab}:
\begin{equation}\label{RVC}
\liminf_{|x|\rightarrow \infty} V(x)>\inf_{x\in \R^{N}} V(x).
\end{equation}
Dipierro et al. \cite{DMV} combined the Mountain Pass Theorem \cite{AR} and Concentration-Compactness Lemma to provide a multiplicity result for a fractional elliptic problem with critical growth. 
Alves and Miyagaki \cite{AM} (see also \cite{A1, A3, A-CPAA}) used a penalization argument to study the existence and concentration of positive solutions of \eqref{FSE} when $f$ has a subcritical growth and $V$ verifies the following assumptions due to del Pino and Felmer \cite{DF}:
\begin{compactenum}[$(V_1)$]
\item $\inf_{x\in \R^{3}} V(x)=V_{0}>0$;
\item  there exists a bounded domain $\Lambda\subset \R^{3}$ such that
\begin{equation}
V_{0}<\min_{\partial \Lambda} V \quad \mbox{ and } \quad M=\{x\in \Lambda: V(x)=V_{0}\}\neq \emptyset.
\end{equation}
\end{compactenum}
On the other hand, in these last years, some interesting papers appeared dealing with fractional Schr\"odinger-Poisson systems like
\begin{equation}\label{FSPS} 
\left\{
\begin{array}{ll}
\e^{2s}(-\Delta)^{s} u+V(x) \phi u = g(x, u) &\mbox{ in } \R^{3} \\
\e^{2t}(-\Delta)^{t} \phi= u^{2} &\mbox{ in } \R^{3},
\end{array}
\right.
\end{equation}
which can be considered as the nonlocal counterpart of the well-known Schr\"odinger-Poisson system which describes systems of identical charged particles interacting each other in the case that effects of magnetic field could be ignored and its solution represents, in particular, a standing wave for such a system; see \cite{BF}. In the classical case $s=t=1$, we refer to \cite{ASS, AdAP, ruiz, ZZ} and \cite{He, HL, WTXZ, Y} in which several results for unperturbed (i.e. $\e=1$)  and perturbed (i.e. $\e>0$ small) Schr\"odinger-Poisson systems and in absence of magnetic fields have been established, and \cite{BV, P, ZS} for some existence, uniqueness and multiplicity results when $A\not\equiv 0$.

Concerning \eqref{FSPS}, the first result is probably due to Giammetta \cite{G}, who studied the local and global well-posedness  of a fractional Schr\"odinger-Poisson system in which the fractional diffusion appears only in the second equation in \eqref{FSPS}.
In \cite{ZDS} Zhang et al.  used a perturbation approach to prove the existence of positive solutions to \eqref{FSPS} with $\e=1$, $V(x)=\mu>0$ and $g$ is a general nonlinearity having subcritical or critical growth.  
Murcia and Siciliano \cite{MS} showed that, for suitably small $\e$, the number of positive solutions to a doubly singularly perturbed fractional Schr\"odinger-Poisson system is estimated below by the Ljusternick-Schnirelmann category of the set of minima of the potential. 
Teng \cite{teng} investigated the existence of ground state solutions for a critical unperturbed fractional Schr\"odinger-Poisson system.
Liu and Zhang \cite{LZ} studied multiplicity and concentration of solutions to \eqref{FSPS} involving the fractional critical exponent and a potential $V$ satisfying global condition \eqref{RVC}.
To the best of our knowledge, fractional magnetic Schr\"odinger-Poisson equations like \eqref{P} have not ever been considered until now.
Particularly motivated by this fact and by the works \cite{AFF, AM, AD, LZ}, in the present paper we investigate the multiplicity and concentration of nontrivial solutions to \eqref{P} when $\e\rightarrow 0$, under assumptions $(V_1)$-$(V_2)$ on the continuous potential $V$, and 
 $f:\R\rightarrow \R$ is a $C^{1}$ function satisfying the following conditions:
\begin{compactenum}[$(f_1)$]
\item $f(t)=0$ for $t\leq 0$ and $\displaystyle{\lim_{t\rightarrow 0} \frac{f(t)}{t}=0}$;
\item there exist $q, \nu\in (4, 2^{*}_{s})$ and $\mu>0$ such that 
$$
f(t)\geq \mu t^{\frac{\nu-2}{2}} \quad \forall t>0 \, \mbox{ and } \lim_{t\rightarrow \infty} \frac{f(t)}{t^{\frac{q-2}{2}}}=0;
$$
\item there exists $\theta\in (4, q)$ such that $0<\frac{\theta}{2} F(t)\leq t f(t)$ for any $t>0$, where $F(t)=\int_{0}^{t} f(\tau)d\tau$;
\item $t\mapsto \frac{f(t)}{t}$ is increasing for $t>0$.
\end{compactenum} 
A typical example of function verifying $(f_1)$-$(f_4)$ is given by $f(t)=\sum_{i=1}^{k} \alpha_{i}(t^{+})^{\frac{q_{i}-2}{2}}$, with $\alpha_{i}\geq 0$ not all null and $q_{i}\in [\theta, 2^{*}_{s})$ for all $i\in \{1, \dots, k\}$.\\
Our main result can be stated as follows:
\begin{thm}\label{thm1}
Assume that $(V_1)$-$(V_2)$ and $(f_1)$-$(f_4)$ hold. Then, for any $\delta>0$ such that
$$
M_{\delta}=\{x\in \R^{3}: dist(x, M)\leq \delta\}\subset \Lambda,
$$ 
there exists $\e_{\delta}>0$ such that, for any $\e\in (0, \e_{\delta})$, problem \eqref{P} has at least $cat_{M_{\delta}}(M)$ nontrivial solutions. Moreover, if $u_{\e}$ denotes one of these solutions and $x_{\e}$ is a global maximum point of $|u_{\e}|$, then we have 
$$
\lim_{\e\rightarrow 0} V(x_{\e})=V_{0}
$$	
and
$$
|u_{\e}(x)|\leq \frac{C\e^{3+2s}}{C\e^{3+2s}+|x-x_{\e}|^{3+2s}} \quad \forall x\in \R^{3}.
$$
\end{thm}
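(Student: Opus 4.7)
The plan is to follow the variational framework originally due to del Pino and Felmer, combined with a Ljusternik-Schnirelmann category argument inspired by Benci-Cingolani and the adaptations in \cite{AM, AD, LZ}, while handling the magnetic phase as in \cite{AFF, DS}. First, I would introduce a penalized nonlinearity: fix $k>\frac{\theta}{\theta-2}$ and $a>0$ with $f(a)=\frac{V_0}{k}$, set $\tilde f(t)=f(t)$ for $t\le a$ and $\tilde f(t)=\frac{V_0}{k}$ for $t>a$, and define $g(x,t)=\chi_{\Lambda}(x)f(t)+(1-\chi_{\Lambda}(x))\tilde f(t)$. Then rescaling $x\mapsto \e x$, problem \eqref{P} becomes an autonomous-looking equation with potential $V_\e(x):=V(\e x)$, convolution term of Riesz type, critical growth, and penalized subcritical part. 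The associated functional $\J_\e$ on the magnetic Hilbert space $\h$ has a well-defined Nehari manifold $\N_\e$, and conditions $(f_1)$--$(f_4)$ together with $q,\nu>4$ and $s>\frac{3}{4}$ (which makes the nonlocal Coulomb term subcritical and compact on bounded sets) yield a mountain-pass geometry.

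Next, I would establish the Palais-Smale condition for $\J_\e$ below the critical threshold $c_\infty-\frac{s}{3}S_*^{3/(2s)}$, where $S_*$ is the best fractional Sobolev constant and $c_\infty$ is the mountain-pass level of the limit problem with constant potential $V_0$ and without magnetic field. The critical-growth part is handled by a Brezis-Nirenberg-type truncation argument, adapting the estimates on fractional Talenti bubbles centered in $M$; since $M_\delta\subset\Lambda$, one may place the concentration profile inside $\Lambda$ where the true $f$ is used, and the magnetic phase contributes only lower order thanks to the diamagnetic inequality. Simultaneously one derives the autonomous limit functional $\J_{V_0}$ at constant potential and shows that its ground-state level $c_{V_0}$ is attained and coincides with the least energy over the Nehari set; this is the ``limit problem'' the solutions will concentrate to.

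The multiplicity part goes through the classical photography/barycenter scheme. Pick a positive least-energy solution $w$ of the autonomous problem at level $c_{V_0}$, and for $y\in M$ define the test function $\Phi_\e(y)(x)=\psi(\e x-y)w\!\left(\frac{\e x-y}{\e}\right)e^{\imath\tau_y(\e x)}$, with $\psi$ a cutoff supported near $M$ and a suitable gauge phase $\tau_y$ (as in \cite{AFF, AD}) so that the magnetic energy of $\Phi_\e(y)$ converges to $\J_{V_0}(w)=c_{V_0}$ uniformly in $y$. Projecting onto $\N_\e$ yields a map $\Phi_\e:M\to\N_\e\cap\{\J_\e\le c_{V_0}+h(\e)\}$ with $h(\e)\to 0^+$. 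Dually, via the diamagnetic inequality one defines a barycenter map $\beta_\e$ on this sublevel, taking values in $M_\delta$ for $\e$ small, by a concentration-compactness argument that rules out vanishing and splitting at level $c_{V_0}+h(\e)$. The composition $\beta_\e\circ\Phi_\e$ is homotopic to the inclusion $M\hookrightarrow M_\delta$, and the abstract Ljusternik-Schnirelmann inequality then produces $\mathrm{cat}_{M_\delta}(M)$ critical points of $\J_\e$ on $\N_\e$.

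Finally, to pass from critical points of the penalized functional to genuine solutions of \eqref{P}, I would prove a uniform $L^\infty$ bound on $|u_\e|$ via Moser iteration adapted to the magnetic kernel (through the diamagnetic inequality $|(-\Delta)^{s/2}|u||\le |(-\Delta)^{s/2}_A u|$ in the Gagliardo sense), and then show that $|u_\e|$ decays uniformly outside a ball centered at its maximum point $x_\e$; this uses a comparison with the fractional Bessel-type kernel, giving the polynomial bound $|u_\e(x)|\le C\e^{3+2s}/(C\e^{3+2s}+|x-x_\e|^{3+2s})$. The decay forces $|u_\e|\le a$ outside $\Lambda/\e$ for small $\e$, so $g(x,|u_\e|^2)=f(|u_\e|^2)$ everywhere and $u_\e$ solves \eqref{P}. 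The same decay, combined with a concentration-compactness / translation argument on $|u_\e|(\cdot +x_\e)$ converging (up to a gauge) to a ground state of the autonomous problem at $V_0$, yields $V(\e x_\e)\to V_0$, which after reversing the scaling gives $V(x_\e)\to V_0$. The main obstacle I expect is the simultaneous control of the critical exponent and the magnetic phase in the PS analysis and in the photography map: one must choose the phase $\tau_y$ correctly to avoid energy loss and must keep the PS compactness threshold strictly above $c_{V_0}$ but below $c_{V_0}+\frac{s}{3}S_*^{3/(2s)}$, which is where the hypothesis $s>\frac{3}{4}$ becomes crucial.
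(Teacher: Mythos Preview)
Your overall architecture matches the paper's, but there are two genuine gaps that would make the argument fail as written.

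\textbf{The penalization must absorb the critical term.} You truncate only $f$, setting $g(x,t)=\chi_\Lambda(x)f(t)+(1-\chi_\Lambda(x))\tilde f(t)$, and treat $|u|^{2^*_s-2}u$ separately. The paper instead penalizes the full reaction $f(t)+t^{(2^*_s-2)/2}$: one fixes $a>0$ with $f(a)+a^{(2^*_s-2)/2}=V_0/k$ and builds $\tilde f$ from this combined quantity, so that outside $\Lambda$ one has $g(x,t)t\leq \frac{V(x)}{k}t$ with the critical part already absorbed. This is not cosmetic. In the tightness step of the Palais--Smale analysis (testing $J_\e'(u_n)$ against $\eta_R u_n$ for a cutoff at radius $R$), the estimate on $\R^3\setminus\Lambda_\e$ relies precisely on $(g_3)$-(ii). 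If the critical term sits outside the penalization, an uncontrolled $\int_{B_R^c}|u_n|^{2^*_s}\eta_R\,dx$ survives on the right-hand side and you cannot conclude tightness; the concentration--compactness step (showing the Dirac masses fall inside $\Lambda_\e$) likewise uses that the critical term is confined to $\Lambda_\e$. Related to this, your Palais--Smale threshold is misstated: the correct compactness window is simply $c<\frac{s}{3}S_*^{3/(2s)}$, and the key energy inequality is $c_{V_0}<\frac{s}{3}S_*^{3/(2s)}$ (obtained for the autonomous limit problem), not a shifted threshold of the form $c_{V_0}\pm\frac{s}{3}S_*^{3/(2s)}$.

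\textbf{The passage from $u_\e$ to a non-magnetic subsolution for $|u_\e|$ is not automatic.} The Moser iteration with the diamagnetic inequality does give a uniform $L^\infty$ bound, but the decay via comparison with the Bessel kernel requires that $|u_n|$ be a \emph{weak subsolution} of $(-\Delta)^s v+V_0 v=g_{\e_n}(x,v^2)v$ in $\R^3$. In the local case this is Kato's inequality; for the nonlocal magnetic operator \eqref{operator} no such distributional inequality is available off the shelf, and the pointwise diamagnetic inequality does not yield it directly. The paper's substitute is to test the modified equation with $\psi_{\delta,n}=\dfrac{u_n}{\sqrt{|u_n|^2+\delta^2}}\,\varphi$ for $\varphi\in C^\infty_c(\R^3)$, $\varphi\geq 0$, show by a careful algebraic manipulation that the real part of the Gagliardo form dominates $\iint \frac{(|u_n(x)|-|u_n(y)|)(\varphi(x)-\varphi(y))}{|x-y|^{3+2s}}\,dx\,dy$ up to a term that vanishes as $\delta\to 0$, and then pass to the limit. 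This ``Kato's inequality for the modified problem'' is singled out as one of the new ingredients and is what makes the Bessel--kernel comparison and the polynomial decay legitimate; your outline should include it explicitly.
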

\begin{remark}
Let us note that if $s, t \in (0, 1)$ are such that $4s+2t\geq 3$, then $H^{s}(\R^{3}, \R)\subset L^{\frac{12}{3+2t}}(\R^{3}, \R)$ and $\phi_{|u|}^{t}$ is well-defined; see Section \ref{sec2} below. Therefore, $s\in (\frac{3}{4}, 1)$ and $t\in (0, 1)$ are admissible exponents. Moreover, the restriction $s\in (\frac{3}{4}, 1)$ is related to the growth assumptions on $f$  (in fact, we have that $2^{*}_{s}>4$) which allow us to apply variational arguments, use the Nehari manifold and verify the Palais-Smale condition; see Sections \ref{sec3} and \ref{sec4} below. For what concerns the dimension $N=3$, we suspect that our results can be extended only in low dimensions such that $N\leq 4s+2t$ (see for instance \cite{MS}) and considering more general nonlinearities such that $\frac{F(u)}{u^{2}}\rightarrow \infty$ as $u\rightarrow \infty$ and that do not verify $(f_4)$. Anyway, the three dimensional case is relevant for the physical meaning of the fractional Schr\"odinger-Poisson system.
\end{remark}
The proof of Theorem \ref{thm1} relies on suitable variational methods and Ljusternik-Schnirelmann theory inspired by \cite{ADOS} and \cite{AFF} in which the authors dealt with classical Schr\"odinger equations with critical growth and $A\equiv 0$ and subcritical growth and $A\not\equiv 0$ respectively. First of all we note that, using the change of variable $x\mapsto \e x$, problem (\ref{P}) is equivalent to the following one
\begin{equation}\label{Pe}
(-\Delta)_{A_{\e}}^{s} u + V_{\e}( x)u +\phi_{|u|}^{t} u=  f(|u|^{2})u+|u|^{\2-2}u  \mbox{ in } \R^{3},
\end{equation}
where $A_{\e}(x)=A(\e x)$, $V_{\e}(x)=V(\e x)$ and $\phi_{|u|}^{t}=|x|^{2t-3}*|u|^{2}$. Since we do not have any information on the behavior of $V$ at infinity, we adapt the penalization argument developed by del Pino and Felmer in \cite{DF}, which consists in modifying the nonlinearity $f$ in a special way and to consider an auxiliary problem. More precisely, as in \cite{AFF}, we fix $k>\frac{\theta}{\theta-2}$ and $a>0$ such that $f(a)+a^{\frac{\2-2}{2}}=\frac{V_{0}}{k}$, and we consider the function
$$
\hat{f}(t):=
\begin{cases}
f(t)+(t^+)^{\frac{\2-2}{2}} & \text{ if $t \leq a$} \\
\frac{V_{0}}{k}    & \text{ if $t >a$}.
\end{cases}
$$ 
Let $t_{a}, T_{a}>0$ such that $t_{a}<a<T_{a}$ and take $\xi\in C^{\infty}_{c}(\R, \R)$ such that
\begin{compactenum}[$(\xi_1)$]
\item $\xi(t)\leq \hat{f}(t)$ for all $t\in [t_{a}, T_{a}]$,
\item $\xi(t_{a})=\hat{f}(t_{a})$, $\xi(T_{a})=\hat{f}(T_{a})$, $\xi'(t_{a})=\hat{f}'(t_{a})$ and $\xi'(T_{a})=\hat{f}'(T_{a})$, 
\item the map $t\mapsto \frac{\xi(t)}{t}$ is increasing for all $t\in [t_{a}, T_{a}]$.
\end{compactenum}
Then we define $\tilde{f}\in C^{1}(\R, \R)$ as follows:
$$
\tilde{f}(t):=
\begin{cases}
\hat{f}(t)& \text{ if $t\notin [t_{a}, T_{a}]$} \\
\xi(t)    & \text{ if $t\in [t_{a}, T_{a}]$}.
\end{cases}
$$ 
Finally, we introduce the following penalized nonlinearity $g: \R^{3}\times \R\rightarrow \R$ by setting
$$
g(x, t)=\chi_{\Lambda}(x)(f(t)+(t^+)^{\frac{\2-2}{2}})+(1-\chi_{\Lambda}(x))\tilde{f}(t),
$$
where $\chi_{\Lambda}$ is the characteristic function on $\Lambda$, and  we set $G(x, t)=\int_{0}^{t} g(x, \tau)\, d\tau$.
From assumptions $(f_1)$-$(f_4)$ and $(\xi_1)$-$(\xi_3)$, it follows that $g$ verifies the following properties:
\begin{compactenum}[($g_1$)]
\item $\displaystyle{\lim_{t\rightarrow 0} \frac{g(x, t)}{t}=0}$ uniformly in $x\in \R^{3}$;
\item $g(x, t)\leq f(t)+t^{\frac{\2-2}{2}}$ for any $x\in \R^{3}$ and $t>0$;
\item $(i)$ $0< \frac{\theta}{2} G(x, t)\leq g(x, t)t$ for any $x\in \Lambda$ and $t>0$, \\
$(ii)$ $0\leq  G(x, t)\leq g(x, t)t\leq \frac{V(x)}{k}t$ and $0\leq g(x,t)\leq \frac{V(x)}{k}$ for any $x\in \Lambda^{c}$ and $t>0$;
\item $t\mapsto \frac{g(x,t)}{t}$ is increasing for all $x\in \Lambda$ and $t>0$.
\end{compactenum}
Then we consider the following modified problem 
\begin{equation}\label{MPe}
(-\Delta)^{s}_{A_{\e}} u + V_{\e}(x)u +\phi_{|u|}^{t}u=  g_{\e}(x, |u|^{2})u \mbox{ in } \R^{3}, 
\end{equation}
where $g_{\e}(x,t):=g(\e x, t)$.
Let us note that if $u$ is a solution of (\ref{MPe}) such that 
\begin{equation}\label{ue}
|u(x)|\leq t_{a} \mbox{ for all } x\in  \Lambda_{\e}^{c},
\end{equation}
where $\Lambda_{\e}:=\{x\in \R^{3}: \e x\in \Lambda\}$, then $u$ is indeed a solution of the original problem  (\ref{Pe}).

Since we want to find nontrivial solutions  to \eqref{Pe}, we look for critical points of the following functional associated with \eqref{Pe}:
\begin{align*}
J_{\e}(u)&=\frac{c_{3,s}}{2}\iint_{\R^{6}} \frac{|u(x)-e^{\imath (x-y)\cdot A_{\e}(\frac{x+y}{2})} u(y)|^{2}}{|x-y|^{3+2s}} \, dxdy+\frac{1}{2}\int_{\R^{3}} V_{\e}(x) |u|^{2}\, dx\\
&\quad+\frac{1}{4}\int_{\R^{3}}\phi_{|u|}^{t}|u|^{2}dx -\frac{1}{2}\int_{\R^{3}} G_{\e}(x, |u|^{2})\, dx
\end{align*}
defined on the fractional Sobolev space 
$$
\h=\left\{u\in \mathcal{D}^{s}_{A_{\e}}(\R^{3}, \C): \int_{\R^{3}} V_{\e}(x) |u|^{2}\, dx<\infty\right\};
$$
see Section $2$ for more details.
The main difficulty in the study of $J_{\e}$ is related to verify a local Palais-Smale compactness condition at any level $c<c_{*}:=\frac{s}{3}S_{*}^{\frac{3}{2s}}$, where $S_{*}$ is the best Sobolev constant of the embedding $H^{s}(\R^{3}, \R)$ in $L^{\2}(\R^{3}, \R)$.
Indeed, the appearance of the magnetic field, the critical exponent, the convolution term $|x|^{2t-3}*|u|^{2}$ and the nonlocal nature of the fractional magnetic Laplacian, make our analysis much more complicated and delicate with respect to \cite{ADOS, AFF, AM, AD, LZ}. 
We circumvent these issues proving some careful estimates and using the Concentration-Compactness Lemma for the fractional Laplacian \cite{A-CPAA, DMV, PP}; see Lemma \ref{PSc}. The H\"older regularity assumption on the magnetic field $A$ and the fractional diamagnetic inequality established in \cite{DS} will be used to show that the mountain pass minimax level $c_{\e}$ of $J_{\e}$ is less than $c_{*}$ for $\e>0$ small enough.
In order to obtain multiple solutions for the modified problem, we use some techniques developed by Benci and Cerami in \cite{BC}, which are based on suitable comparisons between the category of some sublevel sets of the modified functional and the category of the set $M$.
After that, we need to prove that if $u_{\e}$ is a solution of modified problem \eqref{MPe}, then $|u_{\e}|$ satisfies \eqref{ue} for $\e$ small enough.
In order to achieve our goal, 
we aim to show that the (translated) sequence $(u_{n})$ verifies the property $|u_{n}(x)|\rightarrow 0$ as $|x|\rightarrow \infty$ uniformly with respect to $n\in \mathbb{N}$.
In the case $A=0$ (see for instance \cite{AM, A-CPAA}), this is proved by using some fundamental estimates established in \cite{FQT} concerning the Bessel operator. When $A\not\equiv 0$, we do not have similar informations for the following fractional equation
\begin{equation*}
(-\Delta)^{s}_{A}u+V_{0}u=h(|u|^{2})u \mbox{ in } \R^{3}.
\end{equation*}
To overcome this difficulty, we use a clever approximation argument which allows us to deduce that if $u$ is  a solution to \eqref{MPe}, then $|u|$ is a subsolution to 
\begin{equation*}
(-\Delta)^{s}u+V_{0}u=g_{\e}(x, |u|^{2})|u| \mbox{ in } \R^{3};
\end{equation*}
see Lemma \ref{moser}. We recall that in the case $s=1$, it is clear that if $u$ is a solution to 
$$
-\Delta_{A} u+V_{0}u=h(|u|^{2})u \mbox{ in } \R^{3},
$$
then $|u|$ is a subsolution to 
$$
-\Delta|u|+V_{0}|u|=h(|u|^{2})|u| \mbox{ in } \R^{3},
$$
in view of the Kato's inequality \cite{Kato} 
$$
-\Delta|u|\leq \Re(sign(u)(-\Delta_{A} u)),
$$
and then we can apply standard arguments to prove that $|u(x)|\rightarrow 0$ as $|x|\rightarrow \infty$ (the decay is exponential); see for instance \cite{K}.
Unfortunately, in our setting, 
even if we suspect that a distributional Kato's inequality for \eqref{operator} holds true (see for instance \cite{AD} in which a pointwise fractional magnetic Kato's inequality is used), we  are not able to prove it.
We point out that in \cite{HIL}, the authors obtained a Kato's inequality for magnetic relativistic Schr\"odinger operators $$H_{A, m}^{\beta}=[(-\imath \nabla-A(x))^{2}+m^{2}]^{\beta/2}$$ with $m\geq 0$ and $\beta\in (0, 1]$, which include  \eqref{operator} when $\beta=1$ and $m=0$, that is $H^{1}_{A, 0}=(-\Delta)^{1/2}_{A}$.
On the other hand, due to the nonlocal character of \eqref{operator}, we cannot adapt in our framework the arguments developed in \cite{AFF} to prove that $|u_{n}(x)|\rightarrow 0$ as $|x|\rightarrow \infty$ uniformly with respect to $n\in \mathbb{N}$. For the above reasons, in this work we develop some new ideas needed to achieve our claim. Roughly speaking, we will show that a Kato's inequality holds for the modified problem \eqref{MPe}.
More precisely, we first show that  each  $|u_{n}|$ is bounded in $L^{\infty}(\R^{3}, \R)$-norm uniformly in $n\in \mathbb{N}$, by means of a Moser iteration argument \cite{Moser}. At this point, we prove that each $|u_{n}|$ verifies
\begin{equation*}
(-\Delta)^{s}|u_{n}|+V_{0}|u_{n}|\leq g_{\e}(x, |u_{n}|^{2})|u_{n}| \mbox{ in } \R^{3},
\end{equation*}
by using $\displaystyle{\frac{u_{n}}{u_{\delta,n}}\varphi}$ as test function in the modified problem, 
where $u_{\delta,n}=\sqrt{|u_{n}|^{2}+\delta^{2}}$ and $\varphi$ is a real smooth nonnegative function with compact support in $\R^{3}$, and then we pass to the limit as $\delta\rightarrow 0$. 
This  fact combined with a comparison argument and the results in \cite{AM, FQT}, allows us to deduce that $|u_{n}(x)|\rightarrow 0$ as $|x|\rightarrow \infty$ uniformly with respect to $n\in \mathbb{N}$; see Lemma \ref{moser}.
Finally, we give a decay estimate of modulus $|u_{\e}|$ of solutions $u_{\e}$ to \eqref{P}. 

As far as we know,  this is the first time that penalization methods jointly with Ljusternik-Schnirelmann theory are used to obtain multiple solutions for a fractional magnetic Schr\"odinger-Poisson equation with critical growth. 
\noindent

The paper is structured as follows. In Section \ref{sec2} we recall some properties on the involved fractional Sobolev spaces. In Section \ref{sec3} we prove some compactness properties for the modified functional. In Section \ref{sec4} we introduce the barycenter map which will be a fundamental tool to obtain a multiplicity result for problem \eqref{MPe} via Ljusternick-Schnirelmann theory. In the last section we give the proof of Theorem \ref{thm1}.

\section{Preliminaries}\label{sec2}
In this section we collect some notations and technical lemmas which will be used along the paper.

We define $H^{s}(\R^{3}, \R)$ as the fractional Sobolev space 
$$
H^{s}(\R^{3}, \R)=\{u\in L^{2}(\R^{3}, \R): [u]<\infty\}
$$
where
$$
[u]^{2}=\iint_{\R^{6}} \frac{|u(x)-u(y)|^{2}}{|x-y|^{3+2s}} dxdy.
$$
We recall that the embedding $H^{s}(\R^{3}, \R)\subset L^{q}(\R^{3}, \R)$ is continuous for all $q\in [2, \2)$ and locally compact for all $q\in [1, \2)$; see \cite{DPV, MBRS} for more details on this topic.

Let $L^{2}(\R^{3}, \C)$ be the space of complex-valued functions such that $\int_{\R^{3}}|u|^{2}\, dx<\infty$ endowed with the inner product 
$\langle u, v\rangle_{L^{2}}=\Re\int_{\R^{3}} u\bar{v}\, dx$, where the bar denotes complex conjugation.

Let us denote by
$$
[u]^{2}_{A}:=\frac{c_{3,s}}{2} \iint_{\R^{6}} \frac{|u(x)-e^{\imath (x-y)\cdot A(\frac{x+y}{2})} u(y)|^{2}}{|x-y|^{3+2s}} \, dxdy,
$$
and consider
$$
D_A^s(\R^3,\C)
:=
\left\{
u\in L^{2_s^*}(\R^3,\C) : [u]^{2}_{A}<\infty
\right\}.
$$
Then we introduce the Hilbert space
$$
H^{s}_{\e}:=
\left\{
u\in D_{A_{\e}}^s(\R^3,\C): \int_{\R^{3}} V_{\e}(x) |u|^{2}\, dx <\infty
\right\}
$$ 
endowed with the scalar product
\begin{align*}
\langle u , v \rangle_{\e}&=
\Re	\int_{\R^{3}} V_{\e}(x) u \bar{v} dx\\
&
+ \frac{c_{3,s}}{2}\Re\iint_{\R^{6}} \frac{(u(x)-e^{\imath(x-y)\cdot A_{\e}(\frac{x+y}{2})} u(y))\overline{(v(x)-e^{\imath(x-y)\cdot A_{\e}(\frac{x+y}{2})}v(y))}}{|x-y|^{3+2s}} dx dy
\end{align*}
and let
$$
\|u\|_{\e}:=\sqrt{\langle u , u \rangle_{\e}}=\sqrt{[u]^{2}_{A_{\e}}+\| \sqrt{V_{\e}}|u| \|^{2}_{L^{2}(\R^{3})}}.
$$
The space $\h$ satisfies the following fundamental properties; see \cite{AD, DS} for more details.
\begin{lem}\cite{AD, DS}
The space $\h$ is complete and $C_c^\infty(\R^3,\C)$ is dense in $\h$. 
\end{lem}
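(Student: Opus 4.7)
The approach is to handle completeness and density separately, in both cases reducing to known facts about $H^{s}(\R^{3},\C)$ via the pointwise diamagnetic inequality of \cite{DS} and the positivity of $V$. For completeness, I would take a Cauchy sequence $(u_{n})\subset\h$ and note that $\|\cdot\|_{\e}^{2}$ splits as $[u]^{2}_{A_{\e}}+\|\sqrt{V_{\e}}|u|\|^{2}_{L^{2}(\R^{3})}$. Since $V_{\e}\geq V_{0}>0$ by $(V_{1})$, the weighted piece dominates the usual $L^{2}$ norm, so $u_{n}\to u$ in $L^{2}(\R^{3},\C)$ and, after passing to a subsequence, a.e. Simultaneously, the magnetic Gagliardo quotient
\[
D_{n}(x,y):=\frac{u_{n}(x)-e^{\imath(x-y)\cdot A_{\e}(\frac{x+y}{2})}u_{n}(y)}{|x-y|^{(3+2s)/2}}
\]
is Cauchy in $L^{2}(\R^{6},\C)$ and converges there to some $D$; pointwise a.e.\ convergence of $u_{n}$ forces $D_{n}\to D_{u}$ a.e., where $D_{u}$ is built from $u$, so $D=D_{u}$ by uniqueness, giving $[u_{n}-u]_{A_{\e}}\to 0$. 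The diamagnetic inequality of \cite{DS} then yields $|u|\in H^{s}(\R^{3},\R)\hookrightarrow L^{\2}(\R^{3},\R)$, hence $u\in D^{s}_{A_{\e}}(\R^{3},\C)$ and therefore $u\in\h$ with $\|u_{n}-u\|_{\e}\to 0$.

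For density I would use the classical two-step scheme. First, given $u\in\h$, I would truncate with a cutoff $\eta_{R}\in C^{\infty}_{c}(\R^{3},\R)$ equal to $1$ on $B_{R}$, supported in $B_{2R}$, with $|\nabla\eta_{R}|\leq C/R$. To show $\eta_{R}u\to u$ in $\h$ one decomposes the magnetic quotient of $\eta_{R}u-u$ as $(\eta_{R}(x)-1)$ times the quotient of $u$, plus a commutator $[\eta_{R}(x)-\eta_{R}(y)]e^{\imath(x-y)\cdot A_{\e}(\frac{x+y}{2})}u(y)$; the first piece goes to zero by dominated convergence, and the second by splitting $\{|x-y|\leq 1\}$ (use the Lipschitz bound on $\eta_{R}$ and $\int_{|z|\leq 1}|z|^{1-2s}\,dz<\infty$) from $\{|x-y|>1\}$ (use $u\in L^{2}$ and $\int_{|z|>1}|z|^{-(3+2s)}\,dz<\infty$). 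Having reduced to compactly supported $u$, I would convolve with a standard symmetric mollifier $\rho_{\delta}$ to obtain $u*\rho_{\delta}\in C^{\infty}_{c}(\R^{3},\C)$ and check that $u*\rho_{\delta}\to u$ in $\h$; convergence in the weighted $L^{2}$ piece is immediate since $V_{\e}$ is continuous and bounded on compact sets.

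The main obstacle is the last convergence, because convolution does not commute with the magnetic phase $e^{\imath(x-y)\cdot A_{\e}(\frac{x+y}{2})}$, so one cannot directly import $L^{2}(\R^{6})$ convergence of the unmagnetized Gagliardo quotients. I would handle it by adding and subtracting the phase applied to $(u*\rho_{\delta})(y)$, obtaining a phaseless Gagliardo quotient of $u-u*\rho_{\delta}$, which converges to $0$ by standard mollification estimates in $H^{s}(\R^{3})$, plus a phase-commutator term that, via $|1-e^{\imath z}|\leq |z|$ and the Hölder continuity of $A_{\e}$ (which is inherited from the $C^{0,\alpha}$ regularity of $A$), is bounded by $\delta^{\alpha}$ times a fixed $L^{2}(\R^{6})$ quantity depending only on $u$, and therefore vanishes as $\delta\to 0$. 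Since this detailed computation is precisely the one carried out in \cite{DS, AD}, I would invoke those references to close the density argument rather than repeat the estimates here.
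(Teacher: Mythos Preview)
The paper does not prove this lemma at all; it simply cites \cite{AD, DS}. Your sketch is therefore strictly more detailed than what the paper offers, and your completeness argument and truncation step are both correct and standard.

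There is one inaccuracy in your mollification discussion. After the decomposition you describe, the phase-commutator term is
\[
\iint_{\R^{6}}\frac{\bigl|1-e^{\imath(x-y)\cdot A_{\e}(\frac{x+y}{2})}\bigr|^{2}\,|(u-u*\rho_{\delta})(y)|^{2}}{|x-y|^{3+2s}}\,dx\,dy,
\]
and this vanishes as $\delta\to 0$ not because of any $\delta^{\alpha}$ factor coming from the H\"older regularity of $A$, but simply because $\|u-u*\rho_{\delta}\|_{L^{2}}\to 0$ while the inner $x$-integral is uniformly bounded: for $|x-y|>1$ use $|1-e^{\imath z}|\leq 2$, and for $|x-y|\leq 1$ use $|1-e^{\imath z}|\leq |z|$ together with the local boundedness of $A_{\e}$ (continuity suffices; H\"older regularity is not needed here). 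Since you ultimately defer to \cite{DS, AD} for this computation anyway, this does not affect the validity of your outline, but the heuristic you gave for why the term is small is not the right one.
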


\begin{lem}\label{DI}\cite{DS}
If $u\in H^{s}_{A}(\R^{3}, \C)$ then $|u|\in H^{s}(\R^{3}, \R)$ and we have
$$
[|u|]\leq [u]_{A}.
$$
\end{lem}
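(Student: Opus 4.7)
The plan is to establish the pointwise diamagnetic inequality at the integrand level and then integrate. Specifically, the core estimate is that for any complex numbers $z,w\in\C$ and any real phase $\theta\in\R$,
$$
\bigl||z|-|w|\bigr|\leq \bigl|z-e^{\imath\theta}w\bigr|.
$$
This follows by expanding squares:
$$
|z-e^{\imath\theta}w|^{2}=|z|^{2}+|w|^{2}-2\Re(\bar{z}\,e^{\imath\theta}w)\geq |z|^{2}+|w|^{2}-2|z||w|=(|z|-|w|)^{2},
$$
since $\Re(\zeta)\leq|\zeta|$ and $|\bar z e^{\imath\theta}w|=|z||w|$. No structure of $\theta$ is used, so it applies verbatim with $z=u(x)$, $w=u(y)$, $\theta=(x-y)\cdot A(\tfrac{x+y}{2})$.

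Next I would plug in this pointwise bound. For each $x\neq y$ in $\R^{3}$ and every $u\in H^{s}_{A}(\R^{3},\C)$, the inequality gives
$$
\frac{\bigl(|u(x)|-|u(y)|\bigr)^{2}}{|x-y|^{3+2s}}\leq \frac{\bigl|u(x)-e^{\imath(x-y)\cdot A(\frac{x+y}{2})}u(y)\bigr|^{2}}{|x-y|^{3+2s}}.
$$
Integrating over $\R^{6}$ and using the definitions of $[\,\cdot\,]$ and $[\,\cdot\,]_{A}$ yields $[|u|]^{2}\leq [u]^{2}_{A}$, hence the claimed seminorm inequality $[|u|]\leq [u]_{A}$.

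Finally, to conclude that $|u|$ genuinely belongs to $H^{s}(\R^{3},\R)$, I would observe that $|u|\in L^{2}(\R^{3},\R)$ since $\||u|\|_{L^{2}}=\|u\|_{L^{2}}$ and $u$ already lives in the relevant $L^{2}$-integrability class inherited from $H^{s}_{A}$; combined with finiteness of $[|u|]$ just established, this places $|u|$ in $H^{s}(\R^{3},\R)$.

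Honestly there is no real obstacle here: the only subtlety is recognizing the correct phase to insert in the elementary inequality so that the exponential factor $e^{\imath(x-y)\cdot A((x+y)/2)}$ in the magnetic Gagliardo seminorm can be absorbed. The measurability of $(x,y)\mapsto |u(x)-e^{\imath(x-y)\cdot A(\frac{x+y}{2})}u(y)|$ is routine given the continuity of $A$ and measurability of $u$, so Fubini/Tonelli apply without fuss and the integration step is immediate.
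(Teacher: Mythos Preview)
Your argument is correct: the pointwise inequality $\bigl||z|-|w|\bigr|\leq |z-e^{\imath\theta}w|$ is exactly the fractional diamagnetic inequality at the integrand level, and integrating it yields the claim. The paper does not actually prove this lemma but cites \cite{DS}, where the proof is precisely the one you wrote; so your approach coincides with the original source.
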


\begin{thm}\label{Sembedding}\cite{DS}
	The space $H^{s}_{\e}$ is continuously embedded in $L^{r}(\R^{3}, \C)$ for $r\in [2, 2^{*}_{s}]$, and compactly embedded in $L_{\rm loc}^{r}(\R^{3}, \C)$ for $r\in [1, 2^{*}_{s})$.\\
\end{thm}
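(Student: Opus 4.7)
The plan is to reduce both assertions to the corresponding embeddings of the standard real-valued fractional Sobolev space $H^{s}(\R^{3},\R)$, using Lemma~\ref{DI} for the continuous embedding and a cut-off argument for the local compactness.

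\emph{Continuous embedding.} Given $u\in H^{s}_{\e}$, Lemma~\ref{DI} provides $|u|\in H^{s}(\R^{3},\R)$ with $[|u|]\leq [u]_{A_{\e}}$, and $(V_1)$ yields $\||u|\|_{L^{2}}\leq V_{0}^{-1/2}\|\sqrt{V_{\e}}\,|u|\|_{L^{2}}$. Combining these gives $\||u|\|_{H^{s}(\R^{3},\R)}\leq C\|u\|_{\e}$. The classical fractional Sobolev embedding $H^{s}(\R^{3},\R)\hookrightarrow L^{r}(\R^{3},\R)$ for $r\in[2,2^{*}_{s}]$ then produces $\|u\|_{L^{r}(\R^{3},\C)}=\||u|\|_{L^{r}(\R^{3},\R)}\leq C\|u\|_{\e}$.

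\emph{Local compactness.} Let $(u_{n})$ be bounded in $H^{s}_{\e}$ and fix a compact $K\subset\R^{3}$. Pick $\eta\in C^{\infty}_{c}(\R^{3},\R)$ with $\eta\equiv 1$ on $K$ and $\supp\eta\subset B_{R}$. The key step is to show that $(\eta u_{n})$ is bounded in the standard space $H^{s}(\R^{3},\C)$; once this is achieved, Rellich--Kondrachov applied to $\Re(\eta u_{n})$ and $\Im(\eta u_{n})$ yields, up to a subsequence, convergence in $L^{r}(B_{R})$ for every $r\in[1,2^{*}_{s})$, and since $\eta\equiv 1$ on $K$ this gives the desired convergence in $L^{r}(K)$. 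To bound $[\eta u_{n}]$ in terms of $\|u_{n}\|_{\e}$, I decompose
\begin{align*}
\eta(x)u_{n}(x)-\eta(y)u_{n}(y) &=\eta(x)\bigl[u_{n}(x)-e^{\imath(x-y)\cdot A_{\e}(\tfrac{x+y}{2})}u_{n}(y)\bigr]\\
&\quad +\eta(x)\bigl[e^{\imath(x-y)\cdot A_{\e}(\tfrac{x+y}{2})}-1\bigr]u_{n}(y)+(\eta(x)-\eta(y))u_{n}(y);
\end{align*}
the squared $L^{2}(\R^{6};|x-y|^{-3-2s}dxdy)$-norm of the first piece is $\leq C\|\eta\|_{\infty}^{2}[u_{n}]_{A_{\e}}^{2}$, while the third piece is dominated by $C\|u_{n}\|_{L^{2}}^{2}$ via the standard Lipschitz/$L^{\infty}$ split $|x-y|\lessgtr 1$.

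\emph{Main obstacle.} The delicate piece is the middle one. Using $|e^{\imath\theta}-1|\leq|\theta|$, it reduces to controlling
$\iint \eta(x)^{2}|A_{\e}(\tfrac{x+y}{2})|^{2}|x-y|^{-1-2s}|u_{n}(y)|^{2}\,dx\,dy$
by $C\|u_{n}\|_{L^{2}}^{2}$. I split on $|y|\leq 2R$, where the continuity of $A$ makes $|A_{\e}(\tfrac{x+y}{2})|$ uniformly bounded on $\supp\eta$ and $\int_{B_{R}}|x-y|^{-1-2s}dx$ is uniformly bounded in $y$ (using $s<1$); and on $|y|>2R$, where $|x-y|\geq|y|/2$ and the H\"older bound $|A_{\e}(z)|\leq|A(0)|+C(\e|z|)^{\alpha}$ gives $|A_{\e}(\tfrac{x+y}{2})|^{2}\leq C(1+|y|^{2\alpha})$, so the integrand (in $y$) is $\leq C(1+|y|^{2\alpha})|y|^{-1-2s}$, which is bounded precisely because the standing hypotheses $s>3/4$ and $\alpha\leq 1$ force $2\alpha-1-2s\leq -1/2<0$. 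This is the structural place where both the H\"older regularity of $A$ and the restriction $s\in(3/4,1)$ enter.
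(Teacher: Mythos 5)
The paper does not prove this theorem; it is imported verbatim from \cite{DS}, so there is no internal proof to compare against. Your argument is correct and is essentially the standard one: the continuous embedding follows from the diamagnetic inequality (Lemma \ref{DI}), hypothesis $(V_1)$, and the scalar Sobolev embedding, while the local compactness is obtained by cutting off and transferring boundedness from $[\,\cdot\,]_{A_{\e}}$ to the ordinary Gagliardo seminorm; your three-term decomposition and the estimates of each piece are sound. One small remark: the claim that the restriction $s\in(\tfrac34,1)$ enters ``structurally'' at this point is an overstatement. On the region $|x-y|>1$ you may simply use $|e^{\imath\theta}-1|\le 2$, for which $\int_{|x-y|>1}|x-y|^{-3-2s}\,dx$ is finite for every $s\in(0,1)$, and reserve the bound $|e^{\imath\theta}-1|\le|\theta|$ together with the local boundedness of $A$ for the region $|x-y|\le 1$, where $y$ stays in a fixed compact set once $x\in\supp\eta$. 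This yields the embedding for all $s\in(0,1)$ and merely continuous $A$; in this paper the hypothesis $s>\tfrac34$ is needed elsewhere (so that $2^{*}_{s}>4$), not here. Finally, to conclude compactness in $L^{r}_{\rm loc}$ rather than in $L^{r}(K)$ for a single fixed $K$, you should close with a diagonal extraction over an exhaustion of $\R^{3}$ by compacta, which is routine.
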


\begin{lem}\label{aux}\cite{AD}
If $u\in H^{s}(\R^{3}, \R)$ and $u$ has compact support, then $w=e^{\imath A(0)\cdot x} u \in \h$.
\end{lem}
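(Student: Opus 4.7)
The plan is to verify the three conditions defining $H^s_\varepsilon$: the $L^{2^*_s}$-integrability, the weighted $L^2$-norm with potential $V_\varepsilon$, and the magnetic Gagliardo seminorm $[w]_{A_\varepsilon}$. Since $|w|=|u|$ and $u$ has compact support $K$, the first two are immediate: $w\in L^{2^*_s}$ from the embedding $H^s(\R^3,\R)\subset L^{2^*_s}(\R^3,\R)$, and $\int V_\varepsilon|w|^2\le (\max_{K}V_\varepsilon)\|u\|_{L^2}^2<\infty$ because $V_\varepsilon$ is continuous. So the real content is showing $[w]_{A_\varepsilon}<\infty$.

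The key algebraic step I would do first is to factor the pointwise magnetic increment. Writing $w(x)=e^{\imath A(0)\cdot x}u(x)$ and pulling $e^{\imath A(0)\cdot x}$ out,
\[
w(x)-e^{\imath(x-y)\cdot A_{\varepsilon}(\frac{x+y}{2})}w(y)=e^{\imath A(0)\cdot x}\Bigl[u(x)-e^{\imath(x-y)\cdot[A_{\varepsilon}(\frac{x+y}{2})-A(0)]}u(y)\Bigr],
\]
so moduli coincide and I effectively only have to estimate a ``shifted'' magnetic seminorm with magnetic potential $A_{\varepsilon}-A(0)$. Setting $\Phi(x,y):=(x-y)\cdot[A_{\varepsilon}(\frac{x+y}{2})-A(0)]$, the triangle inequality gives
\[
|u(x)-e^{\imath\Phi}u(y)|^{2}\le 2|u(x)-u(y)|^{2}+2|u(y)|^{2}|1-e^{\imath\Phi}|^{2}.
\]
The first summand, integrated against $|x-y|^{-3-2s}$, yields $[u]^{2}<\infty$ via Lemma~\ref{DI} and the assumption $u\in H^{s}(\R^{3},\R)$.

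For the second summand I would exploit the compact support of $u$: the $y$-integral is confined to $K$, so it suffices to bound
\[
I(y):=\int_{\R^{3}}\frac{|1-e^{\imath\Phi(x,y)}|^{2}}{|x-y|^{3+2s}}\,dx
\]
uniformly for $y\in K$, and then multiply by $\|u\|_{L^{2}}^{2}$. I split $I(y)$ at $|x-y|=1$. On the inner region, $|1-e^{\imath\Phi}|\le|\Phi|=|x-y|\,|A_{\varepsilon}(\frac{x+y}{2})-A(0)|$; since $y\in K$ and $|x-y|\le 1$ the midpoint $\frac{x+y}{2}$ lies in a fixed bounded set, so the H\"older regularity $A\in C^{0,\alpha}$ gives a constant bound on $|A_{\varepsilon}(\frac{x+y}{2})-A(0)|$, reducing the inner integrand to $C|x-y|^{2-3-2s}=C|x-y|^{-1-2s}$, which is integrable on $\{|x-y|\le 1\}\subset\R^{3}$ since $s<1$. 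On the outer region, I just use the trivial bound $|1-e^{\imath\Phi}|\le 2$, leaving $C|x-y|^{-3-2s}$ which integrates on $\{|x-y|>1\}$.

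The main obstacle, such as it is, lies in handling the midpoint argument $\frac{x+y}{2}$: a naive H\"older estimate gives a bound that grows with $|x|$, and one could worry about controlling it on the whole of $\R^{3}$. The fix is exactly the split above together with the compact support of $u$, which confines the potentially troublesome factor $|A_{\varepsilon}-A(0)|$ to the near-diagonal region where $\frac{x+y}{2}$ stays in a fixed bounded set. Combining the two pieces yields $I(y)\le C_{K,\varepsilon,A}$ uniformly in $y\in K$, hence $[w]_{A_{\varepsilon}}^{2}\le C\bigl([u]^{2}+\|u\|_{L^{2}}^{2}\bigr)<\infty$, and so $w\in H^{s}_{\varepsilon}$.
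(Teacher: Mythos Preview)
Your argument is correct. The paper does not supply its own proof of this lemma (it is quoted from \cite{AD}), but the decomposition you use is exactly the one the paper employs later in Lemma~\ref{AMlem1} when estimating $[w_\e]_{A_\e}^2$: the same factorisation of the phase reduces matters to $[u]^2$ plus the integral of $|u(y)|^2|e^{\imath\Phi}-1|^2/|x-y|^{3+2s}$, which you then control by the near/far splitting in $|x-y|$. Two small remarks: your appeal to Lemma~\ref{DI} for the finiteness of $[u]$ is superfluous, since $[u]<\infty$ is just the hypothesis $u\in H^s(\R^3,\R)$; and on the inner region you only need continuity of $A$ (not the H\"older modulus) to bound $|A_\varepsilon(\tfrac{x+y}{2})-A(0)|$ on the compact set where the midpoint ranges.
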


We also recall the following vanishing lemma \cite{FQT}:
\begin{lem}\label{Lions}\cite{FQT}
	Let $q\in [2, \2)$. If $(u_{n})$ is a bounded sequence in $H^{s}(\R^{3}, \R)$ and if 
	\begin{equation*}
	\lim_{n\rightarrow \infty} \sup_{y\in \R^{3}} \int_{B_{R}(y)} |u_{n}|^{q} dx=0
	\end{equation*}
	for some $R>0$, then $u_{n}\rightarrow 0$ in $L^{r}(\R^{3}, \R)$ for all $r\in (2, 2^{*}_{s})$.
\end{lem}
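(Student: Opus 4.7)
The plan is to combine a covering argument with local Hölder–Sobolev interpolation, adapting the classical proof of Lions' lemma to the nonlocal fractional seminorm. First I would fix a covering $\{B_{R}(y_{i})\}_{i\in\mathbb{N}}$ of $\R^{3}$ (for instance by choosing $y_{i}\in R\,\mathbb{Z}^{3}$) such that each point of $\R^{3}$ belongs to at most $M$ of the balls $B_{i}:=B_{R}(y_{i})$, where $M$ depends only on the dimension. Since $\sup_{y\in\R^{3}}\int_{B_{R}(y)}|u_{n}|^{q}\,dx\geq\sup_{i}\int_{B_{i}}|u_{n}|^{q}\,dx$, the hypothesis gives $\varepsilon_{n}:=\sup_{i}\|u_{n}\|_{L^{q}(B_{i})}\to 0$.

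Next I would select the intermediate exponent
\begin{equation*}
r_{0}:=q\Bigl(1-\tfrac{2}{\2}\Bigr)+2, \qquad \theta:=\tfrac{2}{r_{0}},
\end{equation*}
so that $\frac{1}{r_{0}}=\frac{1-\theta}{q}+\frac{\theta}{\2}$ and $r_{0}\theta=2$; an elementary check shows $r_{0}\in(q,\2)$, and in particular $r_{0}>2$. On each ball $B_{i}$, Hölder interpolation between $L^{q}(B_{i})$ and $L^{\2}(B_{i})$ followed by the fractional Sobolev embedding $H^{s}(B_{i})\hookrightarrow L^{\2}(B_{i})$ (whose constant is translation-invariant) yields
\begin{equation*}
\|u_{n}\|_{L^{r_{0}}(B_{i})}^{r_{0}}\leq C\,\|u_{n}\|_{L^{q}(B_{i})}^{r_{0}(1-\theta)}\|u_{n}\|_{H^{s}(B_{i})}^{r_{0}\theta}= C\,\|u_{n}\|_{L^{q}(B_{i})}^{r_{0}-2}\|u_{n}\|_{H^{s}(B_{i})}^{2}.
\end{equation*}

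I would then sum over $i$. By the bounded overlap, $\|u_{n}\|_{L^{r_{0}}(\R^{3})}^{r_{0}}\leq\sum_{i}\|u_{n}\|_{L^{r_{0}}(B_{i})}^{r_{0}}$, and factoring out the supremum of the local $L^{q}$-norms gives
\begin{equation*}
\|u_{n}\|_{L^{r_{0}}(\R^{3})}^{r_{0}}\leq C\,\varepsilon_{n}^{r_{0}-2}\sum_{i}\|u_{n}\|_{H^{s}(B_{i})}^{2}.
\end{equation*}
The $L^{2}$-part of the right-hand sum is bounded by $M\|u_{n}\|_{L^{2}(\R^{3})}^{2}$, while the local Gagliardo seminorms satisfy
\begin{equation*}
\sum_{i}\iint_{B_{i}\times B_{i}}\frac{|u_{n}(x)-u_{n}(y)|^{2}}{|x-y|^{3+2s}}\,dx\,dy\leq M'\,[u_{n}]^{2},
\end{equation*}
because on $B_{i}\times B_{i}$ one has $|x-y|<2R$, and for such $(x,y)$ only a dimensional number $M'$ of centers $y_{i}$ can lie in $B_{R}(x)\cap B_{R}(y)$. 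Hence $\sum_{i}\|u_{n}\|_{H^{s}(B_{i})}^{2}\leq C\|u_{n}\|_{H^{s}(\R^{3})}^{2}$, which is bounded by hypothesis; since $r_{0}-2>0$ and $\varepsilon_{n}\to 0$, we conclude $u_{n}\to 0$ in $L^{r_{0}}(\R^{3})$.

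Finally, to pass from $r_{0}$ to arbitrary $r\in(2,\2)$, I would use $L^{p}$-interpolation: $(u_{n})$ is uniformly bounded both in $L^{2}(\R^{3})$ and, by the fractional Sobolev embedding, in $L^{\2}(\R^{3})$; for $r\in(2,r_{0}]$ interpolation between $L^{2}$ (bounded) and $L^{r_{0}}$ (vanishing) gives $\|u_{n}\|_{L^{r}(\R^{3})}\to 0$, and for $r\in[r_{0},\2)$ interpolation between $L^{r_{0}}$ and $L^{\2}$ does the same. The main obstacle I foresee is bounding the sum of local fractional seminorms by the global one $[u_{n}]^{2}$: unlike the $W^{1,p}$ setting, where restricting to a ball automatically localizes the energy, here the nonlocal character forces us to exploit that the product cover $\{B_{i}\times B_{i}\}$ is supported in the diagonal strip $\{|x-y|<2R\}$ with dimensional multiplicity, which is what closes the estimate.
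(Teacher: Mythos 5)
Your proof is correct. The paper does not prove this lemma at all --- it is imported by citation from \cite{FQT} --- and your argument is the standard Lions-type covering-plus-interpolation proof adapted to the fractional setting: the exponent bookkeeping for $r_{0}$ checks out, and the one genuinely nonlocal point (that the localized Gagliardo seminorms $\iint_{B_{i}\times B_{i}}$ sum up to a constant times the global seminorm, because the products $B_{i}\times B_{i}$ lie in the strip $\{|x-y|<2R\}$ with bounded multiplicity) is identified and handled correctly.
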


Now, let $s, t\in (0, 1)$ such that $4s+2t\geq 3$. Using the embedding $H^{s}(\R^{3}, \R)\subset L^{q}(\R^{3}, \R)$ for all $q\in [2, \2)$, we can see that 
\begin{equation}\label{ruiz}
H^{s}(\R^{3}, \R)\subset L^{\frac{12}{3+2t}}(\R^{3}, \R).
\end{equation}
For any $u\in \h$, we get $|u|\in H^{s}(\R^{3}, \R)$ by Lemma \ref{DI}, and the linear functional $\mathcal{L}_{|u|}: D^{t, 2}(\R^{3}, \R)\rightarrow \R$ given by
$$
\mathcal{L}_{|u|}(v)=\int_{\R^{3}} |u|^{2} v\, dx 
$$
is well defined and continuous in view of H\"older inequality and \eqref{ruiz}. 
Indeed, we can see that
\begin{equation}
|\mathcal{L}_{|u|}(v)|\leq \left(\int_{\R^{3}} |u|^{\frac{12}{3+2t}}dx\right)^{\frac{3+2t}{6}}  \left(\int_{\R^{3}} |v|^{2^{*}_{t}}dx\right)^{\frac{1}{2^{*}_{t}}}\leq C\|u\|^{2}_{D^{s,2}}\|v\|_{D^{t,2}},
\end{equation}
where
$$
\|v\|^{2}_{D^{t,2}}=\iint_{\R^{6}} \frac{|v(x)-v(y)|^{2}}{|x-y|^{3+2t}}dxdy.
$$
Then, by the Lax-Milgram Theorem there exists a unique $\phi_{|u|}^{t}\in D^{t, 2}(\R^{3}, \R)$ such that 
\begin{equation}\label{FPE}
(-\Delta)^{t} \phi_{|u|}^{t}= |u|^{2} \mbox{ in } \R^{3}.
\end{equation}
Therefore we obtain the following $t$-Riesz formula
\begin{equation}\label{RF}
\phi_{|u|}^{t}(x)=c_{t}\int_{\R^{3}} \frac{|u(y)|^{2}}{|x-y|^{3-2t}} \, dy  \quad (x\in \R^{3}), \quad c_{t}=\pi^{-\frac{3}{2}}2^{-2t}\frac{\Gamma(3-2t)}{\Gamma(t)}.
\end{equation}
We note that the above integral is convergent at infinity since $|u|^{2}\in L^{\frac{6}{3+2t}}(\R^{3}, \R)$.\\
In the sequel, we will omit the constants $c_{3,s}$ and $c_{t}$ in order to lighten the notation. We conclude this section giving some properties on the convolution term.
\begin{lem}\label{poisson}
Let us assume that $4s+2t\geq 3$ and $u\in \h$.
Then we have:
\begin{enumerate}
\item 
$\phi_{|u|}^{t}: H^{s}(\R^{3},\R)\rightarrow D^{t,2}(\R^{3}, \R)$ is continuous and maps bounded sets into bounded sets,
\item if $u_{n}\rightharpoonup u$ in $\h$ then $\phi_{|u_{n}|}^{t}\rightharpoonup \phi_{|u|}^{t}$ in $D^{t,2}(\R^{3},\R)$,
\item $\phi^{t}_{|ru|}=r^{2}\phi^{t}_{|u|}$ for all $r\in \R$ and $\phi^{t}_{|u(\cdot+y)|}(x)=\phi^{t}_{|u|}(x+y)$,
\item $\phi_{|u|}^{t}\geq 0$ for all $u\in \h$, and we have
$$
\|\phi_{|u|}^{t}\|_{D^{t,2}}\leq C\|u\|_{L^{\frac{12}{3+2t}}(\R^{3})}^{2}\leq C\|u\|^{2}_{\e} \, \mbox{ and } \int_{\R^{3}} \phi_{|u|}^{t}|u|^{2}dx\leq C\|u\|^{4}_{L^{\frac{12}{3+2t}}(\R^{3})}\leq C\|u\|_{\e}^{4}.
$$
\end{enumerate} 
\end{lem}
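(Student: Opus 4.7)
The variational characterization of $\phi_{|u|}^{t}$ via Lax--Milgram yields, after testing \eqref{FPE} against $\phi_{|u|}^{t}$ itself, the identity
$$
\|\phi_{|u|}^{t}\|_{D^{t,2}}^{2}=\int_{\R^{3}}\phi_{|u|}^{t}|u|^{2}\,dx.
$$
Combining this with the H\"older estimate for $\mathcal{L}_{|u|}$ already displayed in the excerpt and the embedding $D^{t,2}(\R^{3},\R)\hookrightarrow L^{2^{*}_{t}}(\R^{3},\R)$, one obtains the fundamental bound $\|\phi_{|u|}^{t}\|_{D^{t,2}}\leq C\|u\|_{L^{12/(3+2t)}(\R^{3})}^{2}$, which via \eqref{ruiz} is in turn dominated by $C\|u\|_{\e}^{2}$. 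This yields the boundedness part of (1) and the first bound in (4); the estimate on $\int\phi_{|u|}^{t}|u|^{2}\,dx$ follows from a second H\"older with exponents $(12/(3+2t),12/(3+2t),2^{*}_{t})$, whose reciprocals sum to $1$. Non-negativity of $\phi_{|u|}^{t}$ in (4) is immediate from \eqref{RF}, whose kernel is positive and whose source $|u|^{2}$ is non-negative.

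For the continuity statement in (1), I would set $w_{n}:=\phi_{|u_{n}|}^{t}-\phi_{|u|}^{t}$. By linearity of the Poisson equation, $w_{n}$ solves $(-\Delta)^{t}w_{n}=|u_{n}|^{2}-|u|^{2}$, so testing with $w_{n}$, invoking the pointwise bound $\bigl||u_{n}|^{2}-|u|^{2}\bigr|\leq (|u_{n}|+|u|)\bigl||u_{n}|-|u|\bigr|$, and applying the same triple H\"older as above give
$$
\|w_{n}\|_{D^{t,2}}\leq C\bigl(\|u_{n}\|_{L^{12/(3+2t)}}+\|u\|_{L^{12/(3+2t)}}\bigr)\bigl\||u_{n}|-|u|\bigr\|_{L^{12/(3+2t)}}.
$$
Since $\bigl||u_{n}|-|u|\bigr|\leq |u_{n}-u|$ pointwise, strong convergence $u_{n}\to u$ in $H^{s}(\R^{3},\R)$ (or in $\h$) forces $w_{n}\to 0$ in $D^{t,2}(\R^{3},\R)$, settling the continuity claim.

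For (2), if $u_{n}\rightharpoonup u$ in $\h$ then $(u_{n})$ is bounded, and (1) gives boundedness of $(\phi_{|u_{n}|}^{t})$ in $D^{t,2}(\R^{3},\R)$. Up to a subsequence, $\phi_{|u_{n}|}^{t}\rightharpoonup\psi$ in $D^{t,2}(\R^{3},\R)$. To identify $\psi=\phi_{|u|}^{t}$, I would test against arbitrary $v\in C^{\infty}_{c}(\R^{3},\R)$: weak convergence handles the $D^{t,2}$-duality pairing on the left, while on the right $\int|u_{n}|^{2}v\,dx\to\int|u|^{2}v\,dx$ is supplied by the local compact embedding of $\h$ into $L^{12/(3+2t)}_{\rm loc}(\R^{3},\C)$ furnished by Theorem \ref{Sembedding} (which is applicable because $s\in(3/4,1)$ and $4s+2t\geq 3$ together imply $12/(3+2t)<\2$). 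Hence $\psi$ solves $(-\Delta)^{t}\psi=|u|^{2}$ distributionally and uniqueness in Lax--Milgram yields $\psi=\phi_{|u|}^{t}$; a standard subsequence principle promotes the conclusion to the whole sequence.

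Item (3) is a direct verification from \eqref{RF}: the factor $|ru|^{2}=r^{2}|u|^{2}$ comes out of the convolution, and the translation identity follows from the change of variables $z=y+y_{0}$ inside the integral. The only delicate step of the lemma is the limit passage in item (2), where one must transfer weak convergence in $\h$ to strong local convergence of the quadratic source $|u_{n}|^{2}$; the whole argument hinges on the subcritical integrability exponent $12/(3+2t)$, which is exactly what the compatibility condition $4s+2t\geq 3$ is designed to ensure.
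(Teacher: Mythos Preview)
Your proof is correct and follows essentially the same route as the paper: the same energy identity for $\|\phi_{|u|}^{t}\|_{D^{t,2}}$, the same H\"older/embedding chain for the bounds, and the same use of local compactness for (2). The only cosmetic difference is that for the bound on $\int_{\R^{3}}\phi_{|u|}^{t}|u|^{2}\,dx$ the paper invokes the Hardy--Littlewood--Sobolev inequality on the Riesz formula \eqref{RF} directly, whereas you reach the same conclusion via a triple H\"older together with the Sobolev embedding $D^{t,2}\hookrightarrow L^{2^{*}_{t}}$.
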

\begin{proof}
$(1)$ Since $\phi_{|u|}^{t}\in D^{t,2}(\R^{3}, \R)$ satisfies \eqref{FPE}, that is
\begin{align}\label{spput}
\int_{\R^{3}}(-\Delta)^{\frac{t}{2}}\phi_{|u|}^{t} (-\Delta)^{\frac{t}{2}}v \,dx=\int_{\R^{3}} |u|^{2}v \,dx
\end{align}
for all $v\in D^{t,2}(\R^{3}, \R)$, we can see that $\mathcal{L}_{|u|}$ is such that $\|\mathcal{L}_{|u|}\|_{\mathcal{L}(D^{t,2}, \R)}=\|\phi_{|u|}^{t}\|_{D^{t,2}}$ for all $u\in \h$. Hence, in order to prove the continuity of $\phi_{|u|}^{t}$, it is enough to show that the map $u\in \h\mapsto \mathcal{L}_{|u|}\in \mathcal{L}(D^{t,2}, \R)$ is continuous. Let $u_{n}\rightarrow u$ in $\h$. Using Lemma \ref{DI} and Theorem \ref{Sembedding} we deduce that $|u_{n}|\rightarrow |u|$ in $L^{\frac{12}{3+2t}}(\R^{3})$. Hence, for all $v\in D^{t,2}(\R^{3}, \R)$ we have
\begin{align*}
|\mathcal{L}_{|u_{n}|}(v)-\mathcal{L}_{|u|}(v)|&=\left|\int_{\R^{3}} (|u_{n}|^{2}-|u|^{2})v\, dx\right| \\
&\leq \left(\int_{\R^{3}} ||u_{n}|^{2}-|u|^{2}|^{\frac{6}{3+2t}} \, dx\right)^{\frac{3+2t}{6}} \|v\|_{L^{\frac{6}{3-2t}}(\R^{3})} \\
&\leq C\left[\left(\int_{\R^{3}} ||u_{n}|-|u||^{\frac{12}{3+2t}} \, dx\right)^{\frac{1}{2}} \left(\int_{\R^{3}} ||u_{n}|+|u||^{\frac{12}{3+2t}} \, dx\right)^{\frac{1}{2}}\right]^{\frac{3+2t}{6}} \|v\|_{D^{t,2}} \\
&\leq C  \||u_{n}|-|u|\|_{L^{\frac{12}{3+2t}}(\R^{3})}\|v\|_{D^{t,2}}
\end{align*}
which implies that $\|\phi_{|u_{n}|}^{t}-\phi_{|u|}^{t} \|_{D^{t,2}}=\|\mathcal{L}_{|u_{n}|}-\mathcal{L}_{|u|}\|_{\mathcal{L}(D^{t,2}, \R)}\rightarrow 0$ as $n\rightarrow \infty$.\\
$(2)$ If $u_{n}\rightharpoonup u$ in $\h$, then Lemma \ref{DI} and Theorem \ref{Sembedding} yield $|u_{n}|\rightarrow |u|$ in $L^{q}_{loc}(\R^{3}, \R)$ for all $q\in [1, \2)$. Hence, for all $v\in C^{\infty}_{c}(\R^{3}, \R)$ we get
\begin{align*}
\langle \phi_{|u_{n}|}^{t}-\phi_{|u|}^{t}, v\rangle&=\int_{\R^{3}} (|u_{n}|^{2}-|u|^{2})v\, dx \\
&\leq \left(\int_{supp(v)} ||u_{n}|-|u||^{2}\, dx\right)^{\frac{1}{2}} \left(\int_{\R^{3}} ||u_{n}|+|u||^{2}\, dx\right)^{\frac{1}{2}}  \|v\|_{L^{\infty}(\R^{3})}\\
&\leq C \||u_{n}|-|u|\|_{L^{2}(supp(v))} \|v\|_{L^{\infty}(\R^{3})}\rightarrow 0.
\end{align*}
$(3)$ is obtained by the definition of $\phi_{|u|}^{t}$.\\
$(4)$ It is clear that $\phi_{|u|}^{t}\geq 0$. 
Using \eqref{spput} with $v=\phi_{|u|}^{t}$, H\"older inequality and \eqref{ruiz} we have
\begin{align*}
\|\phi_{|u|}^{t}\|^{2}_{D^{t,2}}&\leq \|u\|^{2}_{L^{\frac{12}{3+2t}}(\R^{3})} \|\phi_{|u|}^{t}\|_{L^{2^{*}_{t}}(\R^{3})}
\leq C  \|u\|^{2}_{L^{\frac{12}{3+2t}}(\R^{3})} \|\phi_{|u|}^{t}\|_{D^{t,2}}\leq C \|u\|^{2}_{\e}  \|\phi_{|u|}^{t}\|_{D^{t,2}}.
\end{align*}
On the other hand, in view of  \eqref{RF}, Hardy-Littlewood-Sobolev inequality \cite{LL} and \eqref{ruiz} we get
\begin{align*}
\int_{\R^{3}} \phi_{|u|}^{t}|u|^{2}dx&\leq C \||u|^{2}\|^{2}_{L^{\frac{6}{3+2t}}(\R^{3})}=C \|u\|^{4}_{L^{\frac{12}{3+2t}}(\R^{3})}\leq C\|u\|_{\e}^{4}. 
\end{align*} 
\end{proof}

\section{Variational framework for the modified functional}\label{sec3}

It is standard to check that weak solutions to (\ref{MPe}) can be found as critical points of the Euler-Lagrange functional
\begin{align*}
J_{\e}(u)=\frac{1}{2}\|u\|^{2}_{\e}+\frac{1}{4}\int_{\R^{3}}\phi_{|u|}^{t}|u|^{2}dx-\frac{1}{2}\int_{\R^{3}} G_{\e}(x, |u|^{2})\, dx,
\end{align*}
We also consider the autonomous problem associated to \eqref{MPe}, that is
\begin{equation}\label{AP0}
(-\Delta)^{s} u + V_{0}u +\phi^{t}_{|u|}u=  f(u^{2})u+|u|^{\2-2}u \mbox{ in } \R^{3}, 
\end{equation}
and we introduce the  corresponding energy functional $J_{V_{0}}: H^{s}(\R^{3}, \R)\rightarrow \R$ given by
\begin{align*}
J_{V_{0}}(u)&=\frac{1}{2}\int_{\R^{3}}|(-\Delta)^{\frac{s}{2}}u|^{2}+V_{0} |u|^{2}\, dx+\frac{1}{4}\int_{\R^{3}}\phi^{t}_{|u|}u^{2}dx-\frac{1}{2}\int_{\R^{3}} F(u^{2})\, dx-\frac{1}{\2}\int_{\R^{3}}  |u|^{\2}\, dx\\
&=\frac{1}{2}\|u\|^{2}_{V_{0}}+\frac{1}{4}\int_{\R^{3}}\phi^{t}_{|u|}u^{2}dx-\frac{1}{2}\int_{\R^{3}} F(u^{2})\, dx-\frac{1}{\2} \int_{\R^{3}} |u|^{\2}\, dx
\end{align*}
where we used the notation $\|\cdot\|_{V_{0}}$ to denote the $H^{s}(\R^{3}, \R)$-norm (equivalent to the standard one). 
We also denote by $J_{\mu}$ the functional associated to the problem \eqref{AP0} replacing $V_{0}$ by $\mu$.

Now, let us introduce the Nehari manifold associated to (\ref{Pe}), that is
\begin{equation*}
\mathcal{N}_{\e}:= \{u\in \h \setminus \{0\} : \langle J_{\e}'(u), u \rangle =0\},
\end{equation*}
and we denote by $\mathcal{N}_{V_{0}}$ the Nehari manifold associated to \eqref{AP0}.
Using the growth conditions of $g$, we can show that there exists $r>0$ 
independent of  $u$ such that 
\begin{equation}\label{uNr}
\|u\|_{\e}\geq r \mbox{ for all } u\in \mathcal{N}_{\e}.
\end{equation}
Indeed, fixed $u\in \mathcal{N}_{\e}$, we get
\begin{align*}
0&=\|u\|_{\e}^{2}+\int_{\R^{3}} \phi^{t}_{|u|}|u|^{2}\, dx-\int_{\R^{3}} g_{\e}(x, |u|^{2})|u|^{2}\,dx\\
&\geq \|u\|_{\e}^{2}-\frac{1}{k} \int_{\R^{3}} V_{\e}(x)|u|^{2}\, dx-C\|u\|_{L^{\2}(\R^{3})}^{\2} \\
&\geq \frac{k-1}{k}\|u\|^{2}_{\e}-C\|u\|_{\e}^{\2}.
\end{align*}
In what follows, we show that $J_{\e}$ possesses a mountain pass geometry \cite{AR}. 
\begin{lem}\label{MPG}
\begin{compactenum}[$(i)$]
\item $J_{\e}(0)=0$;
\item there exists $\alpha, \rho>0$ such that $J_{\e}(u)\geq \alpha$ for any $u\in \h$ such that $\|u\|_{\e}=\rho$;
\item there exists $e\in \h$ with $\|e\|_{\e}>\rho$ such that $J_{\e}(e)<0$.
\end{compactenum}
\end{lem}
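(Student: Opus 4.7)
The plan is to verify the three items in order, with (i) being immediate and (ii)--(iii) reducing to standard estimates once the bounds on $g$ are in place.

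For (i), note that $J_{\e}(0)=0$ follows at once from the definition, since $\|0\|_{\e}=0$, $\phi^{t}_{|0|}\equiv 0$ and $G_{\e}(x,0)=0$.

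For (ii), I will derive a uniform bound of the form $G_{\e}(x,|u|^{2})\leq \xi|u|^{4}+C_{\xi}|u|^{\2}$. Combining $(g_{1})$, which gives $g(x,t)=o(t)$ at $t=0$, with $(g_{2})$ together with $(f_{2})$, which yields $g(x,t)=O(t^{(\2-2)/2})$ as $t\to\infty$, one obtains
$$
g(x,t)\leq \xi\,t+C_{\xi}\,t^{(\2-2)/2}\qquad\forall t\geq 0,\; x\in\R^{3},
$$
and integrating in $t$ produces $G(x,r)\leq \tfrac{\xi}{2}r^{2}+\tfrac{2C_{\xi}}{\2}r^{\2/2}$. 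Substituting $r=|u|^{2}$, applying the diamagnetic inequality (Lemma~\ref{DI}) and the embedding $H^{s}(\R^{3},\R)\hookrightarrow L^{q}(\R^{3},\R)$ for $q\in[2,\2]$ (which covers $q=4$ since $s>3/4$ gives $\2>4$), the two integrals are controlled by $\|u\|_{\e}^{4}$ and $\|u\|_{\e}^{\2}$ respectively. The Poisson term is non-negative, hence it only helps the lower bound, and I obtain
$$
J_{\e}(u)\geq \tfrac{1}{2}\|u\|_{\e}^{2}-C_{1}\xi\|u\|_{\e}^{4}-C_{2}\|u\|_{\e}^{\2}.
$$
Fixing $\xi$ small and taking $\|u\|_{\e}=\rho$ with $\rho$ small gives $J_{\e}(u)\geq\alpha:=\rho^{2}/4>0$.

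For (iii), the plan is to exhibit a test direction supported in $\Lambda_{\e}$ where the critical term in $g$ is fully active, so that its $t^{\2}$ contribution defeats the $t^{4}$ Poisson contribution. Since $\Lambda$ is open and nonempty, for the given (fixed) $\e$ the set $\Lambda_{\e}=\e^{-1}\Lambda$ contains balls, so I can pick $u_{0}\in C^{\infty}_{c}(\R^{3},\R)$, $u_{0}\not\equiv 0$, with $\supp u_{0}\subset\Lambda_{\e}$. Setting $u:=e^{\imath A_{\e}(0)\cdot x}u_{0}$, Lemma~\ref{aux} (applied with $A$ replaced by $A_{\e}$, which is still H\"older continuous) gives $u\in \h$ with $|u|=|u_{0}|$. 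Since $\supp u\subset\Lambda_{\e}$, on this support $g_{\e}(x,\tau)=f(\tau)+\tau^{(\2-2)/2}$, so $G_{\e}(x,r)\geq \tfrac{2}{\2}r^{\2/2}$ for $x\in\supp u$. Plugging $tu$ into $J_{\e}$ and using $F\geq 0$:
$$
J_{\e}(tu)\leq \tfrac{t^{2}}{2}\|u\|_{\e}^{2}+\tfrac{t^{4}}{4}\int_{\R^{3}}\phi^{t}_{|u|}|u|^{2}\,dx-\tfrac{t^{\2}}{\2}\int_{\R^{3}}|u_{0}|^{\2}\,dx.
$$
Because $s>3/4$ implies $\2>4$, the negative critical term dominates as $t\to\infty$, so $J_{\e}(tu)\to -\infty$; choosing $t$ large enough that both $\|tu\|_{\e}>\rho$ and $J_{\e}(tu)<0$, one sets $e:=tu$.

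The only mildly delicate point is confirming $u=e^{\imath A_{\e}(0)\cdot x}u_{0}\in\h$: Lemma~\ref{aux} is written for $A$ rather than $A_{\e}$, but its proof carries over verbatim using the H\"older continuity of $A$ and the compactness of $\supp u_{0}$. An alternative route avoiding this verification is to use $(g_{3})(i)$ to get $G_{\e}(x,r)\geq Cr^{\theta/2}$ on $\Lambda_{\e}$ and exploit $\theta>4$; this already suffices to beat the quartic Poisson term, so either estimate closes the argument.
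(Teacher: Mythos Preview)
Your argument is correct and essentially matches the paper's proof. For (ii) both you and the paper use $(g_1)$--$(g_2)$ to get the lower bound $J_{\e}(u)\geq \tfrac{1}{2}\|u\|_{\e}^{2}-C\delta\|u\|_{\e}^{4}-C_{\delta}\|u\|_{\e}^{\2}$; for (iii) the paper takes the $(f_3)$ route (the Ambrosetti--Rabinowitz growth $F(t)\geq Ct^{\theta/2}-C$ with $\theta>4$) that you mention as an alternative, while you primarily use the critical term $t^{\2}$---either works since both exponents beat the quartic Poisson term. Note also that your caveat about Lemma~\ref{aux} is unnecessary: $A_{\e}(0)=A(0)$, so the lemma applies as stated.
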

\begin{proof}
Using $(g_1)$, $(g_2)$, and Theorem \ref{Sembedding} we can see that for any $\delta>0$ there exists $C_{\delta}>0$ such that
$$
J_{\e}(u)\geq \frac{1}{2}\|u\|^{2}_{\e}-\delta C\|u\|^{4}_{\e}-C_{\delta} \|u\|^{\2}_{\e}.
$$
Choosing $\delta>0$ sufficiently small, we can see that $(i)$ holds.
Regarding $(ii)$, we can note that in view of $(f_3)$ and Lemma \ref{poisson}, we have for any $u\in \h\setminus\{0\}$ with $supp(u)\subset \Lambda_{\e}$ and $T>1$
\begin{align*}
J_{\e}(Tu)&\leq \frac{T^{2}}{2} \|u\|^{2}_{\e}+\frac{T^{4}}{4}\int_{\R^{3}}\phi_{|u|}^{t}|u|^{2}dx-\frac{1}{2}\int_{\Lambda_{\e}} F(T^{2}|u|^{2})\, dx \\
&\leq \frac{T^{4}}{2} \left(\|u\|^{2}_{\e}+\int_{\R^{3}}\phi_{|u|}^{t}|u|^{2}dx\right)-CT^{\theta} \int_{\Lambda_{\e}} |u|^{\theta}\, dx+C
\end{align*}
which together with $\theta>4$, implies that $J_{\e}(Tu)\rightarrow -\infty$ as $T\rightarrow \infty$.
\end{proof}

In view of Lemma \ref{MPG}, we can define the minimax level 
\begin{align*}
c_{\e}=\inf_{\gamma\in \Gamma_{\e}} \max_{t\in [0, 1]} J_{\e}(\gamma(t)) \quad \mbox{ where } \quad \Gamma_{\e}=\{\gamma\in C([0, 1], \h): \gamma(0)=0 \mbox{ and } J_{\e}(\gamma(1))<0\}.
\end{align*}
It is standard to verify that $c_{\e}$ can be characterized as follows:
$$
c_{\e}=\inf_{u\in \h\setminus\{0\}} \sup_{t\geq 0} J_{\e}(t u)=\inf_{u\in \N_{\e}} J_{\e}(u);
$$
see \cite{W} for more details.
Using a version of the Mountain Pass Theorem without $(PS)$ condition (see \cite{W}), we can deduce the existence of a Palais-Smale sequence sequence $(u_{n})$ at the level $c_{\e}$.

Now, we show that $J_{\e}$ verifies a compactness condition which is related to the best constant $S_{*}$ of the Sobolev embedding $H^{s}(\R^{3}, \R)\subset L^{\2}(\R^{3}, \R)$ (see \cite{DPV}). More precisely:
\begin{lem}\label{PSc}
Let $c<c_{*}=\frac{s}{3}S_{*}^{\frac{3}{2s}}$. Then $J_{\e}$ satisfies the Palais-Smale condition at the level $c$.
\end{lem}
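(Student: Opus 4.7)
Let $(u_n)\subset H^s_\e$ be a Palais--Smale sequence at level $c<c_*$, i.e.\ $J_\e(u_n)\to c$ and $J_\e'(u_n)\to 0$. I will proceed in four stages: boundedness, identification of the weak limit, exclusion of concentration using the fractional Concentration--Compactness Lemma, and strong convergence.

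\emph{Step 1 (boundedness).} I compute
\begin{equation*}
J_\e(u_n)-\tfrac{1}{\theta}\langle J_\e'(u_n),u_n\rangle
=\Bigl(\tfrac12-\tfrac1\theta\Bigr)\|u_n\|_\e^{2}+\Bigl(\tfrac14-\tfrac1\theta\Bigr)\!\int_{\R^{3}}\phi^{t}_{|u_n|}|u_n|^{2}\,dx+\!\int_{\R^{3}}\!\Bigl[\tfrac{1}{\theta}g_\e(x,|u_n|^{2})|u_n|^{2}-\tfrac12 G_\e(x,|u_n|^{2})\Bigr]dx.
\end{equation*}
Since $\theta>4$, the first two coefficients are positive. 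On $\Lambda_\e$ the last integrand is nonnegative by $(g_3)(i)$, while on $\Lambda_\e^c$ it is bounded below by $-\tfrac{1}{2k}V_\e(x)|u_n|^{2}$ via $(g_3)(ii)$. The choice $k>\theta/(\theta-2)$ guarantees $\tfrac12-\tfrac1\theta-\tfrac{1}{2k}>0$, so $(u_n)$ is bounded in $H^s_\e$.

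\emph{Step 2 (weak limit is a critical point).} Up to a subsequence, $u_n\rightharpoonup u$ in $H^s_\e$, $|u_n|\to|u|$ in $L^q_{\mathrm{loc}}(\R^3,\R)$ for every $q\in[1,2^*_s)$ (by Lemma \ref{DI} and Theorem \ref{Sembedding}), and $u_n\to u$ a.e. Using Lemma \ref{poisson}(2) to pass to the limit in the Hartree-type term against test functions in $C^\infty_c(\R^3,\C)$, and standard weak-convergence arguments for the magnetic Gagliardo seminorm and the subcritical parts of $g_\e$, one gets $J_\e'(u)=0$.

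\emph{Step 3 (no concentration via Concentration--Compactness).} Set $v_n=u_n-u$. By the Brezis--Lieb lemma adapted to the magnetic seminorm (see e.g.\ \cite{AD}) and to the potential term,
\begin{equation*}
\|u_n\|_\e^{2}=\|u\|_\e^{2}+\|v_n\|_\e^{2}+o(1),\qquad \int_{\R^{3}}|u_n|^{\2}\,dx=\int_{\R^{3}}|u|^{\2}\,dx+\int_{\R^{3}}|v_n|^{\2}\,dx+o(1).
\end{equation*}
For the Hartree term, the splitting $\int\phi^{t}_{|u_n|}|u_n|^{2}=\int\phi^{t}_{|u|}|u|^{2}+\int\phi^{t}_{|v_n|}|v_n|^{2}+o(1)$ follows from Lemma \ref{poisson} together with Hardy--Littlewood--Sobolev, exploiting that $|v_n|\to 0$ a.e.\ and boundedly. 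Applying the fractional Concentration--Compactness Lemma of \cite{A-CPAA,DMV,PP} to $|v_n|$ (legitimate thanks to the diamagnetic inequality Lemma \ref{DI}), there exist at most countable families $(x_i)\subset\R^{3}$ and $\mu_i,\nu_i\geq 0$ with $\nu_i\leq S_*^{-2^*_s/2}\mu_i^{2^*_s/2}$ such that, in the sense of measures,
\begin{equation*}
|(-\Delta)^{s/2}|v_n||^{2}\rightharpoonup\mu\geq\sum_i\mu_i\delta_{x_i},\qquad |v_n|^{\2}\rightharpoonup\nu=\sum_i\nu_i\delta_{x_i}.
\end{equation*}
Fix an index $i$ and a standard cutoff $\psi_{\rho}$ concentrated at $x_i$. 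Testing $J_\e'(u_n)[\psi_\rho u_n]\to 0$, using $|\langle J_\e'(u_n),\psi_\rho u_n\rangle|=o(1)$, Lemma \ref{DI} to control the magnetic commutator, and $(g_2)$, $(g_3)(ii)$ to discard the subcritical, Hartree and penalized contributions as $\rho\to 0$, one arrives at the dichotomy $\nu_i=0$ or $\nu_i\geq S_*^{3/(2s)}$. A completely analogous argument applied at infinity, using a cutoff $1-\chi_{B_R}$ and the boundedness of the Hartree term from Lemma \ref{poisson}(4), controls the possible loss of mass at infinity by the same threshold.

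\emph{Step 4 (contradiction and conclusion).} Assume some $\nu_i\geq S_*^{3/(2s)}$. Since $J_\e(u_n)-\tfrac{1}{2}\langle J_\e'(u_n),u_n\rangle=\bigl(\tfrac12-\tfrac{1}{2^*_s}\bigr)\int|u_n|^{\2}\,dx+\text{nonnegative terms}$, passing to the limit and using $\tfrac12-\tfrac{1}{2^*_s}=\tfrac{s}{3}$ yields $c\geq\tfrac{s}{3}\,\nu_i\geq\tfrac{s}{3}S_*^{3/(2s)}=c_*$, contradicting $c<c_*$. The same argument rules out mass at infinity. Therefore $\int|u_n|^{\2}\to\int|u|^{\2}$, which upgrades $|u_n|\to|u|$ strongly in $L^{\2}(\R^3,\R)$. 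Plugging $u_n-u$ into $\langle J_\e'(u_n)-J_\e'(u),u_n-u\rangle=o(1)$ and using this strong convergence together with Lemma \ref{poisson} for the Hartree difference and the subcritical local compactness for the $g$-terms, one concludes $\|u_n-u\|_\e\to 0$.

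\emph{Main obstacle.} The delicate point is Step 3: the magnetic phase $e^{\imath(x-y)\cdot A_\e((x+y)/2)}$ prevents a direct application of the scalar Concentration--Compactness machinery, so I must reduce to $|v_n|$ via Lemma \ref{DI} and then carefully absorb the magnetic commutator terms when testing with $\psi_\rho u_n$; checking that these error terms vanish as $\rho\to 0$ (together with a matching argument at infinity for the Hartree term $\phi^t_{|u_n|}u_n$) is where the H\"older continuity of $A$ and the fine estimates of Lemma \ref{poisson} are indispensable.
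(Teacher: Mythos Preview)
Your overall architecture (Brezis--Lieb splitting plus Concentration--Compactness on $v_n=u_n-u$) is a reasonable alternative to the paper's route, which instead first proves a \emph{tightness} estimate (your ``argument at infinity'' is essentially that step), deduces strong subcritical convergence $u_n\to u$ in $L^q(\R^3)$ for all $q\in[2,2^*_s)$, and then applies the concentration--compactness lemma only on the bounded set $\Lambda_\e$ to show $\int_{\Lambda_\e}|u_n|^{2^*_s}\to\int_{\Lambda_\e}|u|^{2^*_s}$. The paper's ordering makes the Hartree term trivial (strong $L^{12/(3+2t)}$ convergence is already in hand), whereas you invoke a Brezis--Lieb splitting for it that, while available, you have not justified.

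There is, however, a genuine error in Step~4. The identity you write,
\[
J_\e(u_n)-\tfrac12\langle J_\e'(u_n),u_n\rangle=\Bigl(\tfrac12-\tfrac{1}{2^*_s}\Bigr)\int_{\R^3}|u_n|^{2^*_s}\,dx+\text{nonnegative terms},
\]
is false: the Hartree contribution enters with coefficient $\tfrac14-\tfrac12=-\tfrac14<0$, so the ``nonnegative terms'' claim breaks down and you cannot conclude $c\ge\tfrac{s}{3}\nu_i$. The paper avoids this by using the combination $J_\e(u_n)-\tfrac14\langle J_\e'(u_n),u_n\rangle$, which annihilates the Hartree term but leaves a positive $\tfrac14\|u_n\|_\e^2$ and only the smaller coefficient $\tfrac14-\tfrac{1}{2^*_s}=\tfrac{4s-3}{12}$ in front of $\int_{\Lambda_\e}|u_n|^{2^*_s}$. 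With this combination, passing to the limit through a cutoff $\psi_\rho$ and the diamagnetic inequality gives
\[
c\ \ge\ \tfrac14\mu_i+\tfrac{4s-3}{12}\nu_i\ \ge\ \tfrac14 S_*\nu_i^{2/2^*_s}+\tfrac{4s-3}{12}\nu_i,
\]
and \emph{both} summands are needed to reach $\tfrac{s}{3}S_*^{3/(2s)}$ when $\nu_i\ge S_*^{3/(2s)}$. In other words, the contradiction cannot be obtained from the critical mass $\nu_i$ alone; you must also exploit the gradient mass $\mu_i$ via $\mu_i\ge S_*\nu_i^{2/2^*_s}$. Rewrite Step~4 with the $\tfrac14$ combination and keep the $\|u_n\|_\e^2$ term, and your argument will go through.
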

\begin{proof}
Let $(u_{n})\subset \h$ be a $(PS)_{c}$-sequence of $J_{\e}$, that is 
$$
J_{\e}(u_{n})\rightarrow c<\frac{s}{3}S_{*}^{\frac{3}{2s}} \mbox{ and } J_{\e}'(u_{n})\rightarrow 0.
$$
We divide the proof into three steps.\\
{\bf Step 1} The sequence $(u_{n})$ is bounded in $\h$. 
Indeed, using $(g_3)$ we can see that
\begin{align*}
c+o_{n}(1)\|u_{n}\|_{\e}&= J_{\e}(u_{n})-\frac{1}{\theta}\langle J'_{\e}(u_{n}), u_{n}\rangle \\
&= \left(\frac{1}{2}-\frac{1}{\theta}\right)\|u_{n}\|^{2}_{\e}+\left( \frac{1}{4}-\frac{1}{\theta}\right)\int_{\R^{3}} \phi_{|u_{n}|}^{t}|u_{n}|^{2}dx\\
&+\frac{1}{\theta}\int_{\R^{3}} \left[g_{\e}(x, |u_{n}|^{2})|u_{n}|^{2}-\frac{\theta}{2} G_{\e}(x, |u_{n}|^{2})\right]\, dx\\
&\geq \left(\frac{1}{2}-\frac{1}{\theta}\right)\|u_{n}\|^{2}_{\e}
+\left(\frac{2-\theta}{2\theta} \right)\int_{\Lambda^{c}_{\e}} G_{\e}(x, |u_{n}|^{2})\, dx \\
&\geq \left(\frac{1}{2}-\frac{1}{\theta}\right)\|u_{n}\|^{2}_{\e}+\left(\frac{2-\theta}{2\theta k} \right)\int_{\Lambda^{c}_{\e}} V_{\e}(x) |u_{n}|^{2}\, dx \\
&\geq \left(\frac{\theta-2}{2\theta}\right)\left( 1-\frac{1}{k} \right)\|u_{n}\|^{2}_{\e}.
\end{align*}
Then, recalling that $k>\frac{\theta}{\theta-2}>1$, we get the thesis.

{\bf Step 2} 
For any $\xi>0$ there exists $R=R_{\xi}>0$ such that $\Lambda_{\e}\subset B_{R}$ and
\begin{align}\label{T}
\limsup_{n\rightarrow \infty}\int_{B_{R}^{c}} \int_{\R^{3}} \frac{|u_{n}(x)-u_{n}(y)e^{\imath A_{\e}(\frac{x+y}{2})\cdot (x-y)}|^{2}}{|x-y|^{3+2s}} \, dx dy+\int_{B_{R}^{c}} V_{\e}(x)|u_{n}|^{2}\, dx\leq \xi.
\end{align}
Let $\eta_{R}\in C^{\infty}(\R^{3}, \R)$ such that $0\leq \eta_{R}\leq 1$, $\eta_{R}=0$ in $B_{\frac{R}{2}}$, $\eta_{R}=1$ in $B_{R}^{c}$ and $|\nabla \eta_{R}|\leq \frac{C}{R}$ for some $C>0$ independent of $R$.
Since $\langle J'_{\e}(u_{n}), \eta_{R}u_{n}\rangle =o_{n}(1)$ we have
\begin{align*}
&\Re\left(\iint_{\R^{6}} \frac{(u_{n}(x)-u_{n}(y)e^{\imath A_{\e}(\frac{x+y}{2})\cdot (x-y)})\overline{(u_{n}(x)\eta_{R}(x)-u_{n}(y)\eta_{R}(y)e^{\imath A_{\e}(\frac{x+y}{2})\cdot (x-y)})}}{|x-y|^{3+2s}}\, dx dy \right)\\
&+\int_{\R^{3}} \phi_{|u_{n}|}^{t} |u_{n}|^{2}\eta_{R} dx+\int_{\R^{3}} V_{\e}(x)\eta_{R} |u_{n}|^{2}\, dx=\int_{\R^{N}} g_{\e}(x, |u_{n}|^{2})|u_{n}|^{2}\eta_{R}\, dx+o_{n}(1).
\end{align*}
Let us note that
\begin{align*}
&\Re\left(\iint_{\R^{6}} \frac{(u_{n}(x)-u_{n}(y)e^{\imath A_{\e}(\frac{x+y}{2})\cdot (x-y)})\overline{(u_{n}(x)\eta_{R}(x)-u_{n}(y)\eta_{R}(y)e^{\imath A_{\e}(\frac{x+y}{2})\cdot (x-y)})}}{|x-y|^{3+2s}}\, dx dy \right)\\
&=\Re\left(\iint_{\R^{6}} \overline{u_{n}(y)}e^{-\imath A_{\e}(\frac{x+y}{2})\cdot (x-y)}\frac{(u_{n}(x)-u_{n}(y)e^{\imath A_{\e}(\frac{x+y}{2})\cdot (x-y)})(\eta_{R}(x)-\eta_{R}(y))}{|x-y|^{3+2s}}  \,dx dy\right)\\
&+\iint_{\R^{6}} \eta_{R}(x)\frac{|u_{n}(x)-u_{n}(y)e^{\imath A_{\e}(\frac{x+y}{2})\cdot (x-y)}|^{2}}{|x-y|^{3+2s}}\, dx dy,
\end{align*}
so, using $(g_3)$-(ii) and Lemma \ref{poisson} we obtain
\begin{align}\label{PS1}
&\iint_{\R^{6}} \eta_{R}(x)\frac{|u_{n}(x)-u_{n}(y)e^{\imath A_{\e}(\frac{x+y}{2})\cdot (x-y)}|^{2}}{|x-y|^{3+2s}}\, dx dy+\int_{\R^{3}} V_{\e}(x)\eta_{R} |u_{n}|^{2}\, dx\nonumber\\
&\leq -\Re\left(\iint_{\R^{6}} \overline{u_{n}(y)}e^{-\imath A_{\e}(\frac{x+y}{2})\cdot (x-y)}\frac{(u_{n}(x)-u_{n}(y)e^{\imath A_{\e}(\frac{x+y}{2})\cdot (x-y)})(\eta_{R}(x)-\eta_{R}(y))}{|x-y|^{3+2s}}  \,dx dy\right) \nonumber\\
&+\frac{1}{k}\int_{\R^{3}} V_{\e}(x) \eta_{R} |u_{n}|^{2}\, dx+o_{n}(1).
\end{align}
From the H\"older inequality and the boundedness of $(u_{n})$ in $\h$ it follows that
\begin{align}\label{PS2}
&\left|\Re\left(\iint_{\R^{6}} \overline{u_{n}(y)}e^{-\imath A_{\e}(\frac{x+y}{2})\cdot (x-y)}\frac{(u_{n}(x)-u_{n}(y)e^{\imath A_{\e}(\frac{x+y}{2})\cdot (x-y)})(\eta_{R}(x)-\eta_{R}(y))}{|x-y|^{3+2s}}  \,dx dy\right)\right| \nonumber\\
&\leq \left(\iint_{\R^{6}} \frac{|u_{n}(x)-u_{n}(y)e^{\imath A_{\e}(\frac{x+y}{2})\cdot (x-y)}|^{2}}{|x-y|^{3+2s}}\,dxdy  \right)^{\frac{1}{2}} \left(\iint_{\R^{6}} |\overline{u_{n}(y)}|^{2}\frac{|\eta_{R}(x)-\eta_{R}(y)|^{2}}{|x-y|^{3+2s}} \, dxdy\right)^{\frac{1}{2}} \nonumber\\
&\leq C \left(\iint_{\R^{6}} |u_{n}(y)|^{2}\frac{|\eta_{R}(x)-\eta_{R}(y)|^{2}}{|x-y|^{3+2s}} \, dxdy\right)^{\frac{1}{2}}.
\end{align}
Arguing as in Lemma $4.3$ in \cite{A-CPAA} (see formula $(42)$ there) or Lemma $2.1$ in \cite{A3}, we can prove that
\begin{equation}\label{PS3}
\limsup_{R\rightarrow \infty}\limsup_{n\rightarrow \infty} \iint_{\R^{6}} |u_{n}(y)|^{2}\frac{|\eta_{R}(x)-\eta_{R}(y)|^{2}}{|x-y|^{3+2s}} \, dxdy=0.
\end{equation}
Then, in view of \eqref{PS1}, \eqref{PS2} and \eqref{PS3} we can conclude that
$$
\limsup_{R\rightarrow \infty}\limsup_{n\rightarrow \infty} \left(1-\frac{1}{k}\right) 
\int_{B_{R}^{c}} \int_{\R^{3}} \frac{|u_{n}(x)-u_{n}(y)e^{\imath A_{\e}(\frac{x+y}{2})\cdot (x-y)}|^{2}}{|x-y|^{3+2s}} \, dx dy+\int_{B_{R}^{c}} V_{\e}(x)|u_{n}|^{2}\, dx=0
$$
that is \eqref{T} is satisfied.

{\bf Step 3}: Up to subsequence, $u_{n}$ strongly converges in $\h$.\\
Using $u_{n}\rightharpoonup u$ in $\h$, Theorem \ref{Sembedding} and $(g_1)$-$(g_2)$, it is easy to see that
\begin{align}\label{Poissw0}
(u_{n}, \psi)_{\e}\rightarrow (u, \psi)_{\e} \mbox{ and } \Re\left(\int_{\R^{3}} g_{\e}(x, |u_{n}|^{2}) u_{n}\bar{\psi} dx\right)\rightarrow  \Re\left(\int_{\R^{3}} g_{\e}(x, |u|^{2}) u\bar{\psi} dx\right). 
\end{align}
Moreover, using \eqref{T} and Theorem \ref{Sembedding} we can see that for all $\xi>0$ there exists $R=R_{\xi}>0$ such that for any $n$ large enough
\begin{align*}
\|u_{n}-u\|_{L^{q}(\R^{3})}&=\|u_{n}-u\|_{L^{q}(B_{R})}+\|u_{n}-u\|_{L^{q}(B_{R}^{c})}\\
&\leq \|u_{n}-u\|_{L^{q}(B_{R})}+(\|u_{n}\|_{L^{q}(B_{R}^{c})}+\|u\|_{L^{q}(B^{c}_{R})}) \\
&\leq \xi+2C\xi,
\end{align*}
where $q\in [2, \2)$, which gives  
\begin{equation}\label{CSSq}
u_{n}\rightarrow u \mbox{ in } L^{q}(\R^{3}, \C)\quad \forall q\in [2, \2).
\end{equation}
Since $||u_{n}|-|u||\leq |u_{n}-u|$ and $\frac{12}{3+2t}\in (2, 2^{*}_{s})$, we also have $|u_{n}|\rightarrow |u|$ in $L^{\frac{12}{3+2t}}(\R^{3},\R)$. \\
Then, recalling that $\phi_{|u|}: L^{\frac{12}{3+2t}}(\R^{3}, \R)\rightarrow D^{t,2}(\R^{3}, \R)$ is continuous (see Lemma \ref{poisson}) we can deduce that
\begin{align}\begin{split}\label{Poissw1}
\phi_{|u_{n}|}^{t}\rightarrow \phi_{|u|}^{t} \mbox{ in } D^{t,2}(\R^{3}, \R).
\end{split}\end{align}
Putting together \eqref{CSSq}, \eqref{Poissw1}, H\"older inequality and Theorem \ref{Sembedding} we obtain
\begin{align}\label{Poissw2}
\Re\left(\int_{\R^{3}} (\phi_{|u_{n}|}^{t}u_{n}-\phi_{|u|}^{t}u)\bar{\psi}dx  \right)&=\Re\left(\int_{\R^{3}} \phi_{|u_{n}|}^{t}(u_{n}-u)\bar{\psi}+\int_{\R^{3}} (\phi_{|u_{n}|}^{t}-\phi_{|u|}^{t})u\bar{\psi}dx  \right) \nonumber\\
&\leq \|\phi_{|u_{n}|}^{t}\|_{L^{\frac{6}{3+2t}}(\R^{3})} \|u_{n}-u\|_{L^{\frac{12}{3+2t}}(\R^{3})}\|\psi\|_{L^{\frac{12}{3+2t}}(\R^{3})}\nonumber \\
&+\|\phi_{|u_{n}|}^{t}-\phi_{|u|}^{t}\|_{\frac{6}{3+2t}} \|u\|_{L^{\frac{12}{3+2t}}(\R^{3})}\|\psi\|_{\frac{12}{3+2t}} \nonumber\\
&\leq C \|u_{n}-u\|_{L^{\frac{12}{3+2t}}(\R^{3})}+C\|\phi_{|u_{n}|}^{t}-\phi_{|u|}^{t}\|_{D^{t,2}}\rightarrow 0.
\end{align}
Therefore, using $\langle J'_{\e}(u_{n}), \psi\rangle =o_{n}(1)$ for all $\psi\in C^{\infty}_{c}(\R^{3}, \C)$, and taking into account \eqref{Poissw0}, \eqref{Poissw1} and \eqref{Poissw2}, we can check that $J'_{\e}(u)=0$. In particular
\begin{equation}\label{Poiss0}
\|u\|^{2}_{\e}+\int_{\R^{3}} \phi_{|u|}^{t}|u|^{2}dx=\int_{\R^{3}} g_{\e}(x, |u|^{2})|u|^{2}\, dx.
\end{equation}
On the other hand, we know that $\langle J'_{\e}(u_{n}), u_{n}\rangle =o_{n}(1)$ implies that
\begin{equation}\label{Poiss1}
\|u_{n}\|^{2}_{\e}+\int_{\R^{3}} \phi_{|u_{n}|}^{t}|u_{n}|^{2}dx=\int_{\R^{3}} g_{\e}(x, |u_{n}|^{2})|u_{n}|^{2}\, dx+o_{n}(1),
\end{equation}
Now, we show that
\begin{equation}\label{Poiss3}
\int_{\R^{3}} \phi_{|u_{n}|}^{t}|u_{n}|^{2}dx\rightarrow \int_{\R^{3}} \phi_{|u|}^{t}|u|^{2}dx.
\end{equation}
Let us begin by proving that
\begin{equation*}
|\mathbb{D}(u_{n})-\mathbb{D}(u)|\leq \sqrt{\mathbb{D}(||u_{n}|^{2}-|u|^{2}|^{1/2})} \sqrt{\mathbb{D}(||u_{n}|^{2}+|u|^{2}|^{1/2})},
\end{equation*}
where
$$
\mathbb{D}(u)=\iint_{\R^{6}} |x-y|^{-(3-2t)}|u(x)|^{2}|u(y)|^{2}dxdy.
$$
Indeed, taking into account that $|x|^{-(3-2t)}$ is even and Theorem $9.8$ in \cite{LL} (see the remark after Theorem $9.8$ and recall that $-3<-(3-2t)<0$ ) we have
\begin{align*}
|\mathbb{D}(u_{n})-\mathbb{D}(u)|&=\left|\iint_{\R^{6}} |x-y|^{-(3-2t)}|u_{n}(x)|^{2}|u_{n}(y)|^{2}dxdy-\iint_{\R^{6}} |x-y|^{-(3-2t)}|u(x)|^{2}|u(y)|^{2}dxdy\right| \\
&=\Bigl|\iint_{\R^{6}} |x-y|^{-(3-2t)}|u_{n}(x)|^{2}|u_{n}(y)|^{2}dxdy+\iint_{\R^{6}} |x-y|^{-(3-2t)}|u_{n}(x)|^{2}|u(y)|^{2}dxdy \\
&-\iint_{\R^{6}} |x-y|^{-(3-2t)}|u(x)|^{2}|u_{n}(y)|^{2}dxdy-\iint_{\R^{6}} |x-y|^{-(3-2t)}|u(x)|^{2}|u(y)|^{2}dxdy\Bigr| \\
&=\left|\iint_{\R^{6}} |x-y|^{-(3-2t)}(|u_{n}(x)|^{2}-|u(x)|^{2}|)(|u_{n}(y)|^{2}+|u(y)|^{2}) dxdy  \right| \\
&\leq \iint_{\R^{6}} |x-y|^{-(3-2t)}||u_{n}(x)|^{2}-|u(x)|^{2}|| ||u_{n}(y)|^{2}+|u(y)|^{2}| dxdy  \\
&\leq C\sqrt{\mathbb{D}(||u_{n}|^{2}-|u|^{2}|^{1/2})} \sqrt{\mathbb{D}(||u_{n}|^{2}+|u|^{2}|^{1/2})}.
\end{align*}
Thus, using Hardy-Littlewood-Sobolev inequality (see Theorem $4.3$ in \cite{LL}), H\"older inequality, the boundedness of $(|u_{n}|)$ in $H^{s}(\R^{3}, \R)$ and $|u_{n}|\rightarrow |u|$ in $L^{\frac{12}{3+2t}}(\R^{3},\R)$ we can see that
\begin{align*}
|\mathbb{D}(u_{n})-\mathbb{D}(u)|^{2}&\leq C\|||u_{n}|^{2}-|u|^{2}||^{1/2}\|^{4}_{L^{\frac{12}{3+2t}}(\R^{3})} \|||u_{n}|^{2}+|u|^{2}||^{1/2}\|^{4}_{L^{\frac{12}{3+2t}}(\R^{3})} \nonumber \\
&\leq C\||u_{n}|-|u|\|^{2}_{L^{\frac{12}{3+2t}}(\R^{3})}\rightarrow 0. 
\end{align*}
Finally we show that 
\begin{equation}\label{Poiss2}
\lim_{n\rightarrow \infty} \int_{\R^{3}} g_{\e}(x, |u_{n}|^{2})|u_{n}|^{2}\, dx=\int_{\R^{3}} g_{\e}(x, |u|^{2})|u|^{2}\, dx.
\end{equation}
Using $(f_1)$, $(f_2)$, $(g_2)$ and Theorem \ref{Sembedding} we get 
\begin{equation}\label{3.6-1}
\int_{\R^{3} \setminus B_{R}} g_{\e}(x, |u_{n}|^{2})|u_{n}|^{2} \, dx \leq C(\delta+\delta^{\frac{q}{2}}+\delta^{\frac{2^{*}_{s}}{2}}), 
\end{equation}
for any $n$ big enough.
On the other hand, choosing $R$ large enough, we may assume that
\begin{equation}\label{3.6-2}
\int_{\R^{3} \setminus B_{R}} g_{\e}(x, |u|^{2})|u|^{2} \, dx \leq \delta. 
\end{equation}
From the arbitrariness of $\delta>0$, we can see that \eqref{3.6-1} and \eqref{3.6-2} yield
\begin{align}\label{ioo} 
\int_{\R^{3} \setminus B_{R}} g_{\e}(x, |u_{n}|^{2})|u_{n}|^{2} \, dx\rightarrow \int_{\R^{3} \setminus B_{R}} g_{\e}(x, |u|^{2})|u|^{2}\, dx
\end{align}
as $n\rightarrow \infty$.
Now, we note that from the definition of $g$ we know that
$$
g_{\e}(x, |u_{n}|^{2})|u_{n}|^{2}\leq f(|u_{n}|^{2})|u_{n}|^{2}+|u_{n}|^{2^{*}_{s}}+\frac{V_{0}}{K}|u_{n}|^{2} \mbox{ in } \R^{3}\setminus \Lambda_{\e}.
$$
Since $B_{R}\cap (\R^{3}\setminus \Lambda_{\e})$ is bounded, we can use $(f_1)$, $(f_2)$, $(g_2)$, the Dominated Convergence Theorem and the strong convergence in $L^{q}_{loc}(\R^{3}, \R)$ to see that 
\begin{align}\label{iooo}
\int_{B_{R}\cap (\R^{3}\setminus \Lambda_{\e})} g_{\e}(x, |u_{n}|^{2})|u_{n}|^{2}\, dx\rightarrow \int_{B_{R}\cap (\R^{3}\setminus \Lambda_{\e})} g_{\e}(x, |u|^{2})|u|^{2} \, dx
\end{align}
as $n\rightarrow \infty$.\\
At this point, we show that
\begin{equation}\label{ioooo}
\lim_{n\rightarrow \infty}\int_{\Lambda_{\e}} |u_{n}|^{2^{*}_{s}}\,dx=\int_{\Lambda_{\e}} |u|^{2^{*}_{s}} \,dx.
\end{equation}
Indeed, if we assume that \eqref{ioooo} is true, from Theorem \ref{Sembedding}, $(g_2)$, $(f_1)$, $(f_2)$ and the Dominated Convergence Theorem, we can see that
\begin{equation}\label{i5}
\int_{B_{R}\cap\Lambda_{\e}} g_{\e}(x, |u_{n}|^{2})|u_{n}|^{2} \, dx\rightarrow \int_{B_{R}\cap\Lambda_{\e}} g_{\e}(x, |u|^{2})|u|^{2} \, dx.
\end{equation}
Putting together \eqref{ioo}, \eqref{iooo} and \eqref{i5}, we can conclude that \eqref{Poiss2} holds.
Taking into account \eqref{Poiss0},  \eqref{Poiss1},  \eqref{Poiss3} and \eqref{Poiss2} we can deduce that
$$
\lim_{n\rightarrow \infty}\|u_{n}\|^{2}_{\e}=\|u\|^{2}_{\e}.
$$
In what follows we prove that \eqref{ioooo} is satisfied. From \eqref{T} and Lemma \ref{DI} we can see that $(|u_{n}|)$ is tight in $H^{s}(\R^{3}, \R)$, so 
by Concentration-Compactness Lemma \cite{A-CPAA, DMV, PP}, we can find an at most countable index set $I$, sequences $(x_{i})\subset \R^{3}$, $(\mu_{i}), (\nu_{i})\subset (0, \infty)$ such that 
\begin{align}\label{CML}
&\mu\geq |(-\Delta)^{\frac{s}{2}}|u||^{2}+\sum_{i\in I} \mu_{i} \delta_{x_{i}}, \nonumber \\
&\nu=|u|^{\2}+\sum_{i\in I} \nu_{i} \delta_{x_{i}} \quad \mbox{ and } S_{*} \nu_{i}^{\frac{2}{2^{*}_{s}}}\leq \mu_{i}
\end{align}
for any $i\in I$, where $\delta_{x_{i}}$ is the Dirac mass at the point $x_{i}$.
Let us show that $(x_{i})_{i\in I}\cap \Lambda_{\e}=\emptyset$. Assume by contradiction that 
$x_{i}\in \Lambda_{\e}$ for some $i\in I$. For any $\rho>0$, we define $\psi_{\rho}(x)=\psi(\frac{x-x_{i}}{\rho})$ where $\psi\in C^{\infty}_{0}(\R^{N}, [0, 1])$ is such that $\psi=1$ in $B_{1}$, $\psi=0$ in $\R^{3}\setminus B_{2}$ and $\|\nabla \psi\|_{L^{\infty}(\R^{3})}\leq 2$. We suppose that  $\rho>0$ is such that $supp(\psi_{\rho})\subset \Lambda_{\e}$. Since $(\psi_{\rho} u_{n})$ is bounded in $\h$, we can see that $\langle J'_{\e}(u_{n}),\psi_{\rho} u_{n}\rangle=o_{n}(1)$, so, using the pointwise diamagnetic inequality \cite{DS}, we get
\begin{align}\label{amicicritico}
\iint_{\R^{6}} &\psi_{\rho}(y)\frac{||u_{n}(x)|-|u_{n}(y)||^{2}}{|x-y|^{3+2s}} \, dx dy \nonumber\\
&\leq -\Re\left(\iint_{\R^{6}} \frac{(\psi_{\rho}(x)-\psi_{\rho}(y))(u_{n}(x)-u_{n}(y)e^{\imath A_{\e}(\frac{x+y}{2})\cdot (x-y)})}{|x-y|^{3+2s}} \overline{u_{n}(y)}e^{-\imath A_{\e}(\frac{x+y}{2})\cdot (x-y)} dx dy\right) \nonumber\\
&\quad +\int_{\R^{3}} \psi_{\rho} f(|u_{n}|^{2})|u_{n}|^{2}\, dx+\int_{\R^{3}} \psi_{\rho} |u_{n}|^{\2}\, dx+o_{n}(1).
\end{align}
Due to the fact that $f$ has subcritical growth and $\psi_{\rho}$ has compact support, we can see that
\begin{align}\label{Alessia1critico}
\lim_{\rho\rightarrow 0} &\lim_{n\rightarrow \infty} \int_{\R^{3}}  \psi_{\rho} f(|u_{n}|^{2})|u_{n}|^{2}\, dx=\lim_{\rho\rightarrow 0} \int_{\R^{3}}  \psi_{\rho} f(|u|^{2})|u|^{2}\, dx=0.
\end{align}
Now, we show that
\begin{equation}\label{niocritico}
\lim_{\rho\rightarrow 0}\lim_{n\rightarrow \infty} \Re\left(\iint_{\R^{6}} \frac{(\psi_{\rho}(x)-\psi_{\rho}(y))(u_{n}(x)-u_{n}(y)e^{\imath A_{\e}(\frac{x+y}{2})\cdot (x-y)})}{|x-y|^{3+2s}} \overline{u_{n}(y)}e^{-\imath A_{\e}(\frac{x+y}{2})\cdot (x-y)} dx dy\right)=0.
\end{equation}
Using H\"older inequality and the fact that $(u_{n})$ is bounded in $\h$, we can see that
\begin{align*}
&\left|\Re\left(\iint_{\R^{6}} \frac{(\psi_{\rho}(x)-\psi_{\rho}(y))(u_{n}(x)-u_{n}(y)e^{\imath A_{\e}(\frac{x+y}{2})\cdot (x-y)})}{|x-y|^{3+2s}} \overline{u_{n}(y)}e^{-\imath A_{\e}(\frac{x+y}{2})\cdot (x-y)} dx dy\right) \right|  \\
&\leq C\left(\iint_{\R^{6}} |u_{n}(y)|^{2} \frac{|\psi_{\rho}(x)-\psi_{\rho}(y)|^{2}}{|x-y|^{3+2s}} \, dx dy \right)^{\frac{1}{2}}.
\end{align*}
Arguing as in Lemma $4.3$ in \cite{A-CPAA} (see formula $(53)$ there) we can deduce that
\begin{equation}\label{NIOOcritico}
\lim_{\rho\rightarrow 0}\lim_{n\rightarrow \infty}  \iint_{\R^{6}} |u_{n}(x)|^{2} \frac{|\psi_{\rho}(x)-\psi_{\rho}(y)|^{2}}{|x-y|^{3+2s}} \, dx dy =0
\end{equation}
which implies that  \eqref{niocritico} holds.
Therefore, from \eqref{CML} and taking the limit as $n\rightarrow \infty$ and $\rho\rightarrow 0$ in \eqref{amicicritico} we can deduce that \eqref{Alessia1critico} and \eqref{niocritico} yield $\nu_{i}\geq \mu_{i}$ for all $i\in I$. In view of the last statement in \eqref{CML}, we have $\nu_{i}\geq S^{\frac{3}{2s}}$, and using Lemma \ref{DI} and $(g_3)$ we can deduce that
\begin{align*}
c&=J_{\e}(u_{n})-\frac{1}{4}\langle J'_{\e}(u_{n}), u_{n}\rangle+o_{n}(1) \\
&\geq \frac{1}{4}\|u_{n}\|^{2}_{\e}+\frac{1}{2}\int_{\R^{3}\setminus \Lambda_{\e}} \left[\frac{1}{2} g_{\e}(x, |u_{n}|^{2})|u_{n}|^{2}-G_{\e}(x, |u_{n}|^{2})\right] \, dx+ \frac{4s-3}{12}\int_{\Lambda_{\e}} |u_{n}|^{2^{*}_{s}} \, dx+o_{n}(1)\\
&\geq \left[\frac{1}{4}\int_{\Lambda_{\e}}\psi_{\rho} |(-\Delta)^{\frac{s}{2}} |u_{n}||^{2}dx+\frac{1}{4} \int_{\R^{3}\setminus \Lambda_{\e}} V_{\e}(x)|u_{n}|^{2} dx\right]-\frac{1}{4}\int_{\R^{3}\setminus \Lambda_{\e}} G_{\e}(x, |u_{n}|^{2}) dx \\
&+\frac{4s-3}{12}\int_{\Lambda_{\e}} |u_{n}|^{2^{*}_{s}}\, dx+o_{n}(1) \\
&\geq \frac{1}{4}\int_{\Lambda_{\e}}\psi_{\rho} |(-\Delta)^{\frac{s}{2}} |u_{n}||^{2}dx+\left(\frac{1}{4}-\frac{1}{4k}\right)\int_{\R^{3}\setminus \Lambda_{\e}} V_{\e}(x)|u_{n}|^{2} dx+\frac{4s-3}{12}\int_{\Lambda_{\e}} |u_{n}|^{2^{*}_{s}}\, dx+o_{n}(1) \\
&\geq \frac{1}{4}\int_{\Lambda_{\e}}\psi_{\rho} |(-\Delta)^{\frac{s}{2}} |u_{n}||^{2}dx+\frac{4s-3}{12} \int_{\Lambda_{\e}} \psi_{\rho} |u_{n}|^{2^{*}_{s}} \, dx+o_{n}(1).
\end{align*}
Then, in view of \eqref{CML}, $\nu_{i}\geq S^{\frac{3}{2s}}$ and taking the limit as $n\rightarrow \infty$, we find
\begin{align*}
c&\geq \frac{1}{4}\sum_{\{i\in I: x_{i}\in \Lambda_{\e}\}} \psi_{\rho}(x_{i}) \mu_{i}+ \frac{4s-3}{12}\sum_{\{i\in I: x_{i}\in \Lambda_{\e}\}} \psi_{\rho}(x_{i}) \nu_{i} \\
&\geq \frac{1}{4}\sum_{\{i\in I: x_{i}\in \Lambda_{\e}\}} \psi_{\rho}(x_{i}) S_{*}\nu_{i}^{2/2^{*}_{s}}+ \frac{4s-3}{12}\sum_{\{i\in I: x_{i}\in \Lambda_{\e}\}} \psi_{\rho}(x_{i}) \nu_{i} \\
&\geq \frac{1}{4} S_{*}^{\frac{3}{2s}}+ \frac{4s-3}{12} S_{*}^{\frac{3}{2s}}=\frac{1}{3} S_{*}^{\frac{3}{2s}},
\end{align*}
which gives a contradiction. This means that \eqref{ioooo} holds and we can conclude the proof.
\end{proof}

In view of Lemma \ref{MPG}, Lemma \ref{PSc} and that $c_{\e}<c_{*}$ for $\e>0$ small enough (see Lemma \ref{AMlem1} below), one can apply the Mountain Pass Theorem \cite{AR} to deduce the existence of a nontrivial solution to \eqref{MPe} for small $\e$. 
Nevertheless, to obtain multiple critical points, we need to work with the functional $J_{\e}$ constrained to $\N_{\e}$. Therefore, it is fundamental to prove the following compactness result:
\begin{prop}\label{propPSc}
Let $c\in \R$ be such that $c<c_{*}=\frac{s}{3}S_{*}^{\frac{3}{2s}}$. Then, the functional $J_{\e}$ restricted to $\mathcal{N}_{\e}$ satisfies the $(PS)_{c}$ condition at the level $c$.
\end{prop}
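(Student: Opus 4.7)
The plan is to reduce the constrained Palais-Smale condition to the unconstrained version already proved in Lemma \ref{PSc}. Let $(u_n)\subset\mathcal{N}_\e$ satisfy $J_\e(u_n)\to c$ and $\|(J_\e|_{\mathcal{N}_\e})'(u_n)\|_{*}\to 0$. Since $\langle J_\e'(u_n),u_n\rangle=0$ on $\mathcal{N}_\e$, first I would repeat verbatim Step 1 of Lemma \ref{PSc} (evaluating $J_\e(u_n)-\tfrac{1}{\theta}\langle J_\e'(u_n),u_n\rangle$ and invoking $(g_3)$ together with $k>\theta/(\theta-2)$) to conclude that $(u_n)$ is bounded in $\h$.

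Next, by the Lagrange multiplier rule applied to the constraint $\Psi_\e(u):=\langle J_\e'(u),u\rangle$, there exist $\lambda_n\in\R$ with $J_\e'(u_n)=\lambda_n\Psi_\e'(u_n)+o_n(1)$ in $\h^{*}$. Testing with $u_n$ and using $\langle J_\e'(u_n),u_n\rangle=0$ together with boundedness yields $\lambda_n\Psi_\e'(u_n)[u_n]=o_n(1)$, so the whole scheme hinges on a uniform lower bound of the form $-\Psi_\e'(u_n)[u_n]\geq c_0>0$. A direct differentiation combined with the Nehari identity gives
\[
\Psi_\e'(u_n)[u_n]=2\int_{\R^{3}}\phi_{|u_n|}^{t}|u_n|^{2}\,dx-2\int_{\R^{3}}g_{\e,t}(x,|u_n|^{2})|u_n|^{4}\,dx.
\]
On $\Lambda_\e$ a pointwise computation yields $g_{t}(x,t)t^{2}-g(x,t)t=[f'(t)t-f(t)]\,t+\tfrac{\2-4}{2}\,t^{\2/2}$, where the first bracket is nonnegative by $(f_4)$ and the second term is strictly positive because $s>3/4$ ensures $\2>4$. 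Outside $\Lambda_\e$, the construction of $\tilde f$ and $(\xi_3)$ yield $g_{\e,t}\geq 0$, whereas $(g_3)$-(ii) controls $\int_{\Lambda_\e^{c}}g_\e(x,|u_n|^{2})|u_n|^{2}\,dx\leq\tfrac{1}{k}\|u_n\|_\e^{2}$. Inserting these inequalities into the Nehari identity $\int g_\e(x,|u_n|^{2})|u_n|^{2}\,dx=\|u_n\|_\e^{2}+\int\phi_{|u_n|}^{t}|u_n|^{2}\,dx$ cancels the Poisson terms and produces
\[
\Psi_\e'(u_n)[u_n]\leq-2\Bigl(1-\tfrac{1}{k}\Bigr)\|u_n\|_\e^{2}\leq-2\Bigl(1-\tfrac{1}{k}\Bigr)r^{2},
\]
with $r$ the constant from \eqref{uNr}.

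Consequently $\lambda_n\to 0$, hence $J_\e'(u_n)\to 0$ in $\h^{*}$, so $(u_n)$ is a free $(PS)_c$ sequence for $J_\e$; since $c<c_{*}$, Lemma \ref{PSc} supplies a strongly convergent subsequence in $\h$. I expect the main obstacle to be the uniform lower bound on $-\Psi_\e'(u_n)[u_n]$: outside $\Lambda_\e$ the penalized nonlinearity is eventually constant (so $g_{\e,t}$ vanishes there), and the natural pointwise inequality $g_{t}(x,t)t^{2}\geq g(x,t)t$ fails; the deficit must be absorbed via the penalization estimate $g(x,|u|^{2})|u|^{2}\leq V(x)|u|^{2}/k$. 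The bookkeeping above shows that the freedom in choosing $k>\theta/(\theta-2)>1$ is precisely what makes the estimate close.
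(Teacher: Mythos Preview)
Your proposal is correct and follows essentially the same route as the paper: Lagrange multipliers on $\mathcal{N}_\e$, a sign estimate on $\langle T_\e'(u_n),u_n\rangle$ (your $\Psi_\e'(u_n)[u_n]$) exploiting $(f_4)$, $(\xi_3)$, and the penalization bound $(g_3)$-(ii), then $\lambda_n\to 0$ and an appeal to Lemma~\ref{PSc}. The only difference is cosmetic: the paper also derives the intermediate inequality $\langle T_\e'(u_n),u_n\rangle\leq (2/k-2)\|u_n\|_\e^2$ but then discards it in favor of the weaker bound $\leq -(2^*_s-4)\int_{\Lambda_\e}|u_n|^{2^*_s}$ and closes by a contradiction argument (if the limit $\ell=0$ then $\|u_n\|_\e\to 0$, violating \eqref{uNr}); you instead keep the $\|u_n\|_\e^2$ term and combine it directly with \eqref{uNr} to get the uniform negative bound $-2(1-1/k)r^2$, which is cleaner.
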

\begin{proof}
Let $(u_{n})\subset \mathcal{N}_{\e}$ be such that $J_{\e}(u_{n})\rightarrow c$ and $\|J'_{\e}(u_{n})_{|\mathcal{N}_{\e}}\|_{*}=o_{n}(1)$. Then there exists $(\lambda_{n})\subset \R$ such that
\begin{equation}\label{AFT}
J'_{\e}(u_{n})=\lambda_{n} T'_{\e}(u_{n})+o_{n}(1)
\end{equation}
where $T_{\e}: \h\rightarrow \R$ is given by
\begin{align*}
T_{\e}(u)=\|u\|_{\e}^{2}+\int_{\R^{3}} \phi_{|u|}^{t}|u|^{2}\, dx-\int_{\R^{3}} g_{\e}(x, |u|^{2})|u|^{2}\, dx.
\end{align*}
Then, using $\langle J'_{\e}(u_{n}), u_{n}\rangle=0$, the definition of $g$ and the monotonicity of $\eta$ we can see that
\begin{align*}
&\langle T'_{\e}(u_{n}), u_{n}\rangle \nonumber\\
&=2\|u_{n}\|_{\e}^{2}+4 \int_{\R^{3}} \phi_{|u_{n}|}^{t}|u_{n}|^{2}\, dx-2\int_{\R^{3}} g'_{\e}(x, |u_{n}|^{2})|u_{n}|^{4}\, dx-2\int_{\R^{3}} g_{\e}(x, |u_{n}|^{2})|u_{n}|^{2}\, dx  \label{22ZS} \\
&=-2\|u_{n}\|^{2}_{\e}+2\int_{\R^{3}} g_{\e}(x, |u_{n}|^{2})|u_{n}|^{2}\, dx-2\int_{\R^{3}} g'_{\e}(x, |u_{n}|^{2})|u_{n}|^{4}\, dx \nonumber \\
&= -2 \|u_{n}\|^{2}_{\e} + 2 \int_{\Lambda_{\e}\cup \{|u_{n}|^{2}<t_{a}\}} \left[g_{\e}(x, |u_{n}|^{2}) |u_{n}|^{2}- g'_{\e}(x, |u_{n}|^{2})|u_{n}|^{4} \right]\, dx \nonumber \\
&\quad +  2 \int_{\Lambda^{c}_{\e}\cap \{t_{a}\leq |u_{n}|^{2}\leq T_{a}\}} \left[g_{\e}(x, |u_{n}|^{2}) |u_{n}|^{2}- g'_{\e}(x, |u_{n}|^{2})|u_{n}|^{4} \right]\, dx \nonumber \\
&\quad+2 \int_{\Lambda^{c}_{\e}\cap \{|u_{n}|^{2}> T_{a}\}} \left[g_{\e}(x, |u_{n}|^{2}) |u_{n}|^{2}- g'_{\e}(x, |u_{n}|^{2})|u_{n}|^{4} \right]\, dx       \nonumber \\
&\leq -2 \|u_{n}\|^{2}_{\e}+ \frac{2}{k} \int_{\Lambda^{c}_{\e}\cap \{|u_{n}|^{2}> T_{a}\}}  V_{\e}(x) |u_{n}|^{2} \, dx \nonumber \\
&\quad +2 \int_{\Lambda_{\e}\cup \{|u_{n}|^{2}<t_{a}\}} \left[g_{\e}(x, |u_{n}|^{2}) |u_{n}|^{2}- g'_{\e}(x, |u_{n}|^{2})|u_{n}|^{4} \right]\, dx \\
&\quad +2 \int_{\Lambda^{c}_{\e}\cap \{t_{a}\leq |u_{n}|^{2}\leq T_{a}\}} \left[g_{\e}(x, |u_{n}|^{2}) |u_{n}|^{2}- g'_{\e}(x, |u_{n}|^{2})|u_{n}|^{4} \right]\, dx\leq  0 
\end{align*}
where we used $f'(t) t- f(t)\geq 0$ for any $t>0$ in view of $(f_4)$, condition $(\xi_{3})$, $f', \tilde{f}' \in C(\R^{3})$ and recalling the definition of $g$ we know that
\begin{equation*}
g'(x,t)=0 \quad \forall t\geq T_{a} \quad \mbox{ and } \quad g'(x,t)= \tilde{f}'(t) \quad \forall x\in \R^{3}\setminus \Lambda. 
\end{equation*}
Indeed, we obtain
\begin{align*}
\langle T'_{\e}(u_{n}), u_{n}\rangle&\leq \left(\frac{2}{k}-2 \right) \|u_{n}\|^{2}_{\e}+2\int_{\Lambda_{\e}\cup \{|u_{n}|^{2}<t_{a}\}} \left[f(|u_{n}|^{2}) |u_{n}|^{2}- f'(|u_{n}|^{2})|u_{n}|^{4} \right]\, dx \\
&\quad -\int_{\Lambda_{\e}\cup \{|u_{n}|^{2}<t_{a}\}} (2^{*}_{s}-4)|u_{n}|^{\2} \, dx \\
&\quad +2 \int_{\Lambda^{c}_{\e}\cap \{t_{a}\leq |u_{n}|^{2}\leq T_{a}\}} \left[\xi(|u_{n}|^{2}) |u_{n}|^{2}- \xi'(|u_{n}|^{2})|u_{n}|^{4} \right]\, dx \\
&\leq  \left(\frac{2}{k}-2 \right) \|u_{n}\|^{2}_{\e}-(2^{*}_{s}-4) \int_{\Lambda_{\e}\cup \{|u_{n}|^{2}<t_{a}\}} |u_{n}|^{\2} \, dx \\
&\leq -(2^{*}_{s}-4) \int_{\Lambda_{\e}} |u_{n}|^{\2} \, dx.
\end{align*}
Taking into account the above fact and the boundedness of $(u_{n})$ in $\h$, we can see that $\langle T'_{\e}(u_{n}), u_{n}\rangle\rightarrow \ell\leq 0$. If $\ell=0$ we can use $\langle J'_{\e}(u_{n}), u_{n}\rangle=0$ to deduce that 
\begin{equation*}
0\leq \left( 1- \frac{1}{k}\right) \|u_{n}\|^{2}_{\e} \leq o_{n}(1), 
\end{equation*}
that is $\|u_{n}\|_{\e}\rightarrow 0$, which is impossible due to \eqref{uNr}. As a consequence, $\ell<0$ and taking into account \eqref{AFT} we get $\lambda_{n}\rightarrow 0$, that is $u_{n}$ is a $(PS)_{c}$ sequence for the unconstrained functional.  The result follows from Lemma \ref{PSc}.
\end{proof}

As a consequence of the previous result we can see that
\begin{cor}\label{cor}
The critical points of the functional $J_{\e}$ on $\mathcal{N}_{\e}$ are critical points of $J_{\e}$.
\end{cor}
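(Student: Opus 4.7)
The plan is to exploit the Lagrange multiplier rule together with the key algebraic inequality already derived inside the proof of Proposition \ref{propPSc}. Let $u\in\N_{\e}$ be a critical point of $J_{\e}$ constrained to $\N_{\e}$. Since $\N_{\e}=\{u\in\h\setminus\{0\}:T_{\e}(u)=0\}$ with
\begin{equation*}
T_{\e}(u)=\|u\|_{\e}^{2}+\int_{\R^{3}}\phi_{|u|}^{t}|u|^{2}\,dx-\int_{\R^{3}}g_{\e}(x,|u|^{2})|u|^{2}\,dx,
\end{equation*}
I would first verify that $\N_{\e}$ is a $C^{1}$-manifold in a neighborhood of $u$, which amounts to showing that $T_{\e}'(u)\neq 0$. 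This follows from the estimate $\langle T_{\e}'(u),u\rangle<0$ that I will establish below. Granted this, the Lagrange multiplier theorem supplies some $\lambda\in\R$ with $J_{\e}'(u)=\lambda\,T_{\e}'(u)$.

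Next I would pair this identity with $u$ itself. Since $u\in\N_{\e}$ we have $\langle J_{\e}'(u),u\rangle=0$, hence
\begin{equation*}
\lambda\,\langle T_{\e}'(u),u\rangle=0.
\end{equation*}
The main point is therefore to show $\langle T_{\e}'(u),u\rangle<0$ strictly. I would redo the computation already performed in the proof of Proposition \ref{propPSc} for a generic $u\in\N_{\e}$: differentiating $T_{\e}$ and using the Nehari identity to eliminate $\|u\|_{\e}^{2}+\int\phi_{|u|}^{t}|u|^{2}\,dx$, I obtain
\begin{equation*}
\langle T_{\e}'(u),u\rangle=-2\|u\|_{\e}^{2}+2\int_{\R^{3}}\bigl[g_{\e}(x,|u|^{2})|u|^{2}-g_{\e}'(x,|u|^{2})|u|^{4}\bigr]\,dx.
\end{equation*}
Splitting the integral over the three regions $\Lambda_{\e}\cup\{|u|^{2}<t_{a}\}$, $\Lambda_{\e}^{c}\cap\{t_{a}\leq|u|^{2}\leq T_{a}\}$, and $\Lambda_{\e}^{c}\cap\{|u|^{2}>T_{a}\}$, and invoking $(f_4)$, $(\xi_{3})$, together with the bound $g_{\e}(x,\cdot)\leq V_{\e}(x)/k$ on $\Lambda_{\e}^{c}$, the very same chain of inequalities carried out in Proposition \ref{propPSc} yields
\begin{equation*}
\langle T_{\e}'(u),u\rangle\leq-(2^{*}_{s}-4)\int_{\Lambda_{\e}}|u|^{2^{*}_{s}}\,dx.
\end{equation*}

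The potential obstacle is to rule out the degenerate case where the right-hand side vanishes, i.e.\ where $u$ is supported outside $\Lambda_{\e}$. In that situation one argues directly from the Nehari identity: using $(g_{3})$-(ii) one has $\int g_{\e}(x,|u|^{2})|u|^{2}\,dx\leq\frac{1}{k}\int V_{\e}(x)|u|^{2}\,dx$, so
\begin{equation*}
0=\|u\|_{\e}^{2}+\int_{\R^{3}}\phi_{|u|}^{t}|u|^{2}\,dx-\int_{\R^{3}}g_{\e}(x,|u|^{2})|u|^{2}\,dx\geq\Bigl(1-\frac{1}{k}\Bigr)\|u\|_{\e}^{2},
\end{equation*}
which forces $u=0$, contradicting $u\in\N_{\e}$ in view of \eqref{uNr}. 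Hence $\langle T_{\e}'(u),u\rangle<0$ strictly, which both justifies the applicability of the multiplier rule and, combined with $\lambda\langle T_{\e}'(u),u\rangle=0$, forces $\lambda=0$. Therefore $J_{\e}'(u)=0$ and the corollary is proved.
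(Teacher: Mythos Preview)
Your proof is correct and follows essentially the same approach as the paper, which states the corollary as an immediate consequence of the computations in Proposition \ref{propPSc} without giving a separate argument. One minor remark: your treatment of the degenerate case $\int_{\Lambda_{\e}}|u|^{2^{*}_{s}}\,dx=0$ is correct but not strictly needed, since the penultimate line of the chain of inequalities in Proposition \ref{propPSc} already contains the term $\bigl(\tfrac{2}{k}-2\bigr)\|u\|_{\e}^{2}$, which is strictly negative for any $u\in\N_{\e}$ by \eqref{uNr} and $k>1$; this alone forces $\langle T_{\e}'(u),u\rangle<0$.
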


In what follows, we recall the following useful compactness result for the autonomous problem \eqref{AP0} whose proof can be obtained arguing as in Proposition 3.4 in \cite{LZ}.
\begin{lem}\label{FS}
Let $(u_{n})\subset \mathcal{N}_{\mu}$ be a sequence satisfying $J_{\mu}(u_{n})\rightarrow c < \frac{s}{3}S_{*}^{\frac{3}{2s}}$. Then, up to subsequences, the following alternatives holds:
\begin{compactenum}[(i)]
\item $(u_{n})$ strongly converges in $H^{s}(\R^{3}, \R)$, 
\item there exists a sequence $(\tilde{y}_{n})\subset \R^{3}$ such that,  up to a subsequence, $v_{n}(x)=u_{n}(x+\tilde{y}_{n})$ converges strongly in $H^{s}(\R^{3}, \R)$.
\end{compactenum}
In particular, there exists a minimizer $w\in H^{s}(\R^{3}, \R)$ for $J_{\mu}$ with $J_{\mu}(w)=c$.
\end{lem}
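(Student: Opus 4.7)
The plan is to follow the standard Lions concentration--compactness strategy adapted to the autonomous problem, exploiting translation invariance of $J_\mu$ and the subcritical level hypothesis $c<c_\ast:=\frac{s}{3}S_\ast^{3/(2s)}$ to rule out both vanishing and critical bubbling.

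First I would establish boundedness of $(u_n)$ in $H^s(\R^3,\R)$. Combining $(f_3)$ with the identity
\[
c+o_n(1)=J_\mu(u_n)-\tfrac{1}{\theta}\langle J_\mu'(u_n),u_n\rangle,
\]
and using $\langle J_\mu'(u_n),u_n\rangle=0$ together with $\theta>4$ (so that the Poisson term is also dominated), one obtains $\|u_n\|_{V_0}\le C$. The argument of Proposition \ref{propPSc} applied to $J_\mu|_{\mathcal{N}_\mu}$ (via Ekeland's principle) yields $J_\mu'(u_n)\to 0$ in the dual space; the same computation as in \eqref{uNr} gives a uniform lower bound $\|u_n\|_{V_0}\ge r>0$.

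Next I would apply Lemma \ref{Lions} to exclude vanishing. If $\sup_{y\in\R^3}\int_{B_R(y)}|u_n|^2\,dx\to 0$ for every $R>0$, then $u_n\to 0$ in $L^r(\R^3,\R)$ for $r\in(2,\2)$; $(f_1)$--$(f_2)$ together with $\langle J_\mu'(u_n),u_n\rangle=0$ and $\phi^t_{|u_n|}u_n^2\ge 0$ would force $\|u_n\|_{V_0}^2=\int_{\R^3}|u_n|^{\2}\,dx+o_n(1)$. The fractional Sobolev inequality and the lower bound $\|u_n\|_{V_0}\ge r$ then yield $\limsup\|u_n\|_{V_0}^2\ge S_\ast^{3/(2s)}$, and a routine Nehari computation gives $c\ge \frac{s}{3}S_\ast^{3/(2s)}$, contradicting the hypothesis. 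Hence there exist $\delta,R>0$ and $(\tilde y_n)\subset\R^3$ with $\int_{B_R(\tilde y_n)}|u_n|^2\,dx\ge\delta$. Setting $v_n(x):=u_n(x+\tilde y_n)$, translation invariance of $J_\mu$ gives $v_n\in\mathcal{N}_\mu$, $J_\mu(v_n)\to c$, $J_\mu'(v_n)\to 0$, and the local compactness of $H^s\hookrightarrow L^2_{\mathrm{loc}}$ produces a weak limit $v_n\rightharpoonup w\ne 0$ in $H^s(\R^3,\R)$.

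The main step is then to upgrade weak to strong convergence. Standard test-function arguments, together with the continuity and weak continuity of the Poisson map (Lemma \ref{poisson}), yield $J_\mu'(w)=0$, so $w\in\mathcal{N}_\mu$ and $J_\mu(w)\ge 0$. Writing $z_n:=v_n-w$, the fractional Brezis--Lieb splitting gives
\[
J_\mu(v_n)=J_\mu(w)+J_\mu(z_n)+o_n(1),\qquad \langle J_\mu'(z_n),z_n\rangle=o_n(1).
\]
The principal obstacle is showing $z_n\to 0$ strongly, and this is exactly where the level restriction $c<c_\ast$ is essential. Applying the concentration--compactness lemma as in the proof of Lemma \ref{PSc} to $(z_n)$, any nontrivial Dirac mass $\nu_i$ in the decomposition of $|z_n|^{\2}$ would satisfy $S_\ast\nu_i^{2/\2}\le\mu_i$ and a cut-off argument would force $\nu_i\ge S_\ast^{3/(2s)}$; the identity
\[
J_\mu(z_n)=J_\mu(z_n)-\tfrac{1}{4}\langle J_\mu'(z_n),z_n\rangle+o_n(1)\ge \tfrac{1}{4}\mu_i+\tfrac{4s-3}{12}\nu_i
\]
then yields $c\ge J_\mu(w)+\frac{s}{3}S_\ast^{3/(2s)}\ge c_\ast$, contradicting the hypothesis. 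Hence no concentration occurs, $z_n\to 0$ in $L^{\2}(\R^3,\R)$, the remaining subcritical and Poisson contributions pass to the limit by Lemma \ref{poisson}, $(f_1)$--$(f_2)$ and Vitali's theorem, and therefore $v_n\to w$ strongly in $H^s(\R^3,\R)$. In particular $J_\mu(w)=c$ and $w$ is a minimizer. If $(\tilde y_n)$ is bounded along a subsequence we recover alternative (i) after relabeling; otherwise alternative (ii) is the conclusion.
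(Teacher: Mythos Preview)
The paper does not actually give a proof of Lemma~\ref{FS}; it simply refers the reader to Proposition~3.4 of \cite{LZ}. Your outline is precisely the standard argument one expects there: boundedness from the Nehari constraint and $(f_3)$, exclusion of vanishing via Lemma~\ref{Lions} and the level bound $c<c_\ast$, translation to a nontrivial weak limit, and then upgrading to strong convergence. So in substance your proposal matches the intended proof.

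Two small points are worth tightening. First, Ekeland's principle does not give $J_\mu'(u_n)\to 0$ for the \emph{given} sequence; it produces a nearby sequence $(\tilde u_n)$ with $\|\tilde u_n-u_n\|_\mu\to 0$ and constrained gradient going to zero, after which the Lagrange--multiplier argument of Proposition~\ref{propPSc} yields $J_\mu'(\tilde u_n)\to 0$. This step tacitly uses $c=c_\mu:=\inf_{\mathcal N_\mu}J_\mu$, which is consistent with the ``minimizer'' clause in the statement and with every application of the lemma in the paper. Second, once you know $w\in\mathcal N_\mu$ (so $J_\mu(w)\ge c_\mu$) and $c=c_\mu$, the concentration--compactness machinery for $(z_n)$ is not needed: the map $u\mapsto J_\mu(u)-\tfrac14\langle J_\mu'(u),u\rangle$ is nonnegative on $\mathcal N_\mu$ and weakly lower semicontinuous by Fatou (using $(f_3)$ with $\theta>4$), so $J_\mu(w)\le c=c_\mu\le J_\mu(w)$, whence $J_\mu(z_n)\to 0$ and the same nonnegativity forces $\|z_n\|_\mu\to 0$. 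Your route through Dirac masses also works, but you should either include the mass--at--infinity terms in the concentration--compactness decomposition or note that the previous shortcut makes this unnecessary.
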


\noindent
Finally, we prove the following interesting relation between $c_{\e}$ and $c_{V_{0}}$.
\begin{lem}\label{AMlem1}
The numbers $c_{\e}$ and $c_{V_{0}}$ satisfy the following inequality
$$
\limsup_{\e\rightarrow 0} c_{\e}\leq c_{V_{0}}<c_{*}.
$$
\end{lem}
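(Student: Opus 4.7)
The claim splits into two independent pieces: the strict bound $c_{V_{0}}<c_{*}$ and the asymptotic bound $\limsup_{\e\to 0} c_{\e}\leq c_{V_{0}}$.

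\textbf{Step 1: $c_{V_{0}}<c_{*}$.} I would run a Brezis--Nirenberg type test function argument. Let $U$ be an optimizer of the fractional Sobolev inequality on $\R^{3}$, normalised so that $[U]^{2}=S_{*}\|U\|_{2^{*}_{s}}^{2}$, and consider the concentrating family $U_{\rho}(x)=\rho^{-(3-2s)/2}U(x/\rho)$ truncated by a fixed $\eta\in C^{\infty}_{c}(\R^{3})$ equal to $1$ near the origin; put $u_{\rho}=\eta U_{\rho}$. Exploiting $s\in(\tfrac{3}{4},1)$, i.e.\ $4s>3$, the standard fractional asymptotics yield
\begin{align*}
[u_{\rho}]^{2}&=S_{*}^{3/(2s)}+O(\rho^{3-2s}), \quad \|u_{\rho}\|_{2^{*}_{s}}^{2^{*}_{s}}=S_{*}^{3/(2s)}+O(\rho^{3}),\\
\|u_{\rho}\|_{2}^{2}&=O(\rho^{3-2s}), \quad \|u_{\rho}\|_{\nu}^{\nu}\geq c\,\rho^{3-\nu(3-2s)/2},
\end{align*}
while $\int \phi^{t}_{|u_{\rho}|}|u_{\rho}|^{2}\,dx\leq C\|u_{\rho}\|_{12/(3+2t)}^{4}$ stays of lower order. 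Locating the unique $t_{\rho}>0$ which maximises $t\mapsto J_{V_{0}}(tu_{\rho})$ (bounded and bounded away from $0$ thanks to $2^{*}_{s}>4$ and the superquadratic growth of $F$), a direct computation gives
$$
\max_{t\geq 0}J_{V_{0}}(tu_{\rho})\leq \frac{s}{3}S_{*}^{3/(2s)}+C\rho^{3-2s}-c\,\rho^{3-\nu(3-2s)/2}.
$$
Since $\nu>4$ and $s<1$ force $\nu>4s/(3-2s)$, one has $3-\nu(3-2s)/2<3-2s$, so the negative subcritical contribution supplied by the lower bound $f(t)\geq\mu t^{(\nu-2)/2}$ of $(f_{2})$ dominates the positive $L^{2}$ and $\phi$ corrections for $\rho$ small, yielding $c_{V_{0}}<c_{*}$.

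\textbf{Step 2: $\limsup_{\e\to 0}c_{\e}\leq c_{V_{0}}$.} Since $c_{V_{0}}<c_{*}$, Lemma \ref{FS} produces a ground state $w\in H^{s}(\R^{3},\R)$ with $J_{V_{0}}(w)=c_{V_{0}}$. Up to translating $V$ and $A$ simultaneously (which preserves all hypotheses), I may assume $0\in M\subset\Lambda$, so $V(0)=V_{0}$. Fix $\eta\in C^{\infty}_{c}(\R^{3},[0,1])$ with $\eta\equiv 1$ on $B_{1}$ and $\supp\eta\subset B_{2}$, and set
$$
w_{\e}(x):=\eta(\e x)\,e^{\imath A(0)\cdot x}\,w(x).
$$
Lemma \ref{aux} applied to the compactly supported real function $\eta(\e\cdot)w$ gives $w_{\e}\in\h$. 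By the standard Nehari construction there exists a unique $t_{\e}>0$ with $t_{\e}w_{\e}\in\N_{\e}$, and hence $c_{\e}\leq J_{\e}(t_{\e}w_{\e})$. The task reduces to showing $J_{\e}(t_{\e}w_{\e})\to J_{V_{0}}(w)$. Factoring $e^{\imath A(0)\cdot x}$ out of the magnetic Gagliardo seminorm turns it into
$$
\iint_{\R^{6}}\frac{\bigl|\eta(\e x)w(x)-e^{\imath(x-y)\cdot(A(\e\frac{x+y}{2})-A(0))}\eta(\e y)w(y)\bigr|^{2}}{|x-y|^{3+2s}}\,dx\,dy,
$$
which, via the elementary splitting $|a-e^{\imath\xi}b|^{2}\leq 2|a-b|^{2}+2|b|^{2}|1-e^{\imath\xi}|^{2}$ combined with the H\"older bound $|A(\e\tfrac{x+y}{2})-A(0)|\leq C\e^{\alpha}|\tfrac{x+y}{2}|^{\alpha}$, converges to $[w]^{2}$ as $\e\to 0$ by dominated convergence. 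The remaining pieces $\int V_{\e}|w_{\e}|^{2}$, $\int\phi^{t}_{|w_{\e}|}|w_{\e}|^{2}$, and $\int G_{\e}(x,|w_{\e}|^{2})$ converge to $\int V_{0}w^{2}$, $\int\phi^{t}_{w}w^{2}$, and $\int F(w^{2})+\tfrac{2}{2^{*}_{s}}\int|w|^{2^{*}_{s}}$ respectively, using continuity of $V$ at $0$, Lemma \ref{poisson}, and the fact that $\supp(\eta(\e\cdot))\subset\Lambda_{\e}$ for $\e$ small, so that $g_{\e}$ is unmodified on the relevant set. A standard Nehari monotonicity argument based on $(f_{4})$ then forces $t_{\e}\to 1$, giving $J_{\e}(t_{\e}w_{\e})\to J_{V_{0}}(w)$.

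\textbf{Main obstacle.} The delicate point is the convergence of the magnetic Gagliardo seminorm of $w_{\e}$ to the non-magnetic one of $w$: the H\"older regularity of $A$ enters in an essential way through the bound $|1-e^{\imath\xi}|^{2}\leq C\e^{2\alpha}|x-y|^{2}|\tfrac{x+y}{2}|^{2\alpha}$, which must be integrated against the singular kernel $|x-y|^{-(3+2s)}$ while ensuring that the $\e$-dilated cutoff $\eta(\e\cdot)$ does not spoil the passage to the limit of the tail of $w$.
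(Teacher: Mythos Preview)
Your Step 1 is correct and actually more explicit than the paper, which simply cites the corresponding lemma in Liu--Zhang for $c_{V_{0}}<c_{*}$.

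Your Step 2 has the right architecture (same test function, same Nehari normalization, same $t_{\e}\to 1$ conclusion), but there are two genuine gaps in the treatment of the magnetic seminorm. First, the inequality $|a-e^{i\xi}b|^{2}\leq 2|a-b|^{2}+2|b|^{2}|1-e^{i\xi}|^{2}$ loses a factor of $2$ and therefore cannot give $[w_{\e}]_{A_{\e}}^{2}\to [w]^{2}$; you need the exact expansion
\[
|a-e^{i\xi}b|^{2}=|a-b|^{2}+2\Re\bigl[(a-b)\bar b\,(1-e^{-i\xi})\bigr]+|b|^{2}|1-e^{i\xi}|^{2},
\]
so that $[w_{\e}]_{A_{\e}}^{2}=[\eta_{\e}w]^{2}+2Y_{\e}+X_{\e}$ with $|Y_{\e}|\leq [\eta_{\e}w]\,X_{\e}^{1/2}$, and then it suffices to prove $X_{\e}\to 0$.

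Second, and more seriously, the assertion that $X_{\e}\to 0$ follows ``by dominated convergence'' is not justified. Any $\e$-independent majorant for
\[
\eta_{\e}^{2}(y)\,w^{2}(y)\,\frac{|1-e^{i(x-y)\cdot(A_{\e}(\frac{x+y}{2})-A(0))}|^{2}}{|x-y|^{3+2s}}
\]
must absorb the growth $|A_{\e}(\tfrac{x+y}{2})-A(0)|\leq C\e^{\alpha}|\tfrac{x+y}{2}|^{\alpha}$, and after integrating in $x$ one is left with a factor behaving like $(1+|y|)^{2\alpha s}$; the resulting bound $\int w^{2}(y)(1+|y|)^{2\alpha s}\,dy<\infty$ is \emph{not} a consequence of $w\in H^{s}(\R^{3})$. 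The paper closes this gap by first proving the pointwise decay $0<w(x)\leq C|x|^{-(3+2s)}$ for the autonomous ground state (via a comparison argument with the fractional Bessel kernel from Felmer--Quaas--Tan), and then replacing dominated convergence by a quantitative estimate: split the $x$-integral at scale $|x-y|=\e^{-\beta}$ with $0<\beta<\alpha/(1+\alpha-s)$, use $|e^{it}-1|\leq 2$ on the far region and $|e^{it}-1|\leq|t|$ on the near region, and invoke the decay of $w$ to make $\int |y|^{2\alpha}w^{2}(y)\,dy$ finite. You correctly flagged exactly this point as the ``main obstacle'', but the missing ingredient --- the decay of $w$ --- is what actually resolves it.
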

\begin{proof}
Firstly, we note that $c_{V_{0}}<\frac{s}{3}S_{*}^{\frac{3}{2s}}=c_{*}$ by Lemma $3.1$ in \cite{LZ}.
Now, in view of Lemma \ref{FS}, there exists a positive ground state $w\in H^{s}(\R^{3}, \R)$ to the autonomous problem \eqref{AP0}, so that $J'_{V_{0}}(w)=0$ and $J_{V_{0}}(w)=c_{V_{0}}$. 
Moreover, we know (see Proposition $3.4$ in \cite{LZ}) that $w\in C^{1, \gamma}(\R^{3}, \R)\cap L^{\infty}(\R^{3},\R)$, for some $\gamma>0$. Therefore, $|w(x)|\rightarrow 0$ as $|x|\rightarrow \infty$, 
and we can find $R>0$ such that $(-\Delta)^{s}w+\frac{V_{0}}{2}w\leq 0$ in $|x|>R$. Using Lemma 4.3 in \cite{FQT} we know that there exists a positive continuous function $\tilde{w}$ such that for $|x|>R$ (taking $R$ larger if it is necessary), it holds $(-\Delta)^{s}\tilde{w}+\frac{V_{0}}{2}\tilde{w}=0$ and $\tilde{w}(x)=\frac{C_{0}}{|x|^{3+2s}}$. 
In view of the continuity of $w$ and $\tilde{w}$ there exists some constant $C_{1}>0$ such that $z=w-C_{1}\tilde{w}\leq 0$ on $|x|=R$.
Moreover, we can see that $(-\Delta)^{s}z+\frac{V_{0}}{2}z\geq 0$ in $|x|\geq R$. Using the maximum principle we can deduce that $z\leq 0$ in $|x|\geq R$, that is 
\begin{equation}\label{remdecay}
0<w(x)\leq \frac{C}{|x|^{3+2s}} \quad \mbox{ for } |x|>>1.
\end{equation}
Let $\eta\in C^{\infty}_{c}(\R^{3}, [0,1])$ be a cut-off function such that $\eta=1$ in a neighborhood of zero $B_{\frac{\delta}{2}}$ and $\supp(\eta)\subset B_{\delta}\subset \Lambda$ for some $\delta>0$. 
Let us define $w_{\e}(x):=\eta_{\e}(x)w(x) e^{\imath A(0)\cdot x}$, with $\eta_{\e}(x)=\eta(\e x)$ for $\e>0$, and we observe that $|w_{\e}|=\eta_{\e}w$ and $w_{\e}\in \h$ in view of Lemma \ref{aux}. 
Now we prove that
\begin{equation}\label{limwr}
\lim_{\e\rightarrow 0}\|w_{\e}\|^{2}_{\e}=\|w\|_{V_{0}}^{2}\in(0, \infty).
\end{equation}
Since it is clear that $\int_{\R^{3}} V_{\e}(x)|w_{\e}|^{2}dx\rightarrow \int_{\R^{3}} V_{0} |w|^{2}dx$, we only need to show that
\begin{equation}\label{limwr*}
\lim_{\e\rightarrow 0}[w_{\e}]^{2}_{A_{\e}}=[w]^{2}.
\end{equation}
Using Lemma $5$ in \cite{PP} we know that 
\begin{equation}\label{PPlem}
[\eta_{\e} w]\rightarrow [w] \mbox{ as } \e\rightarrow 0.
\end{equation}
On the other hand
\begin{align*}
[w_{\e}]_{A_{\e}}^{2}
&=\iint_{\R^{6}} \frac{|e^{\imath A(0)\cdot x}\eta_{\e}(x)w(x)-e^{\imath A_{\e}(\frac{x+y}{2})\cdot (x-y)}e^{\imath A(0)\cdot y} \eta_{\e}(y)w(y)|^{2}}{|x-y|^{3+2s}} dx dy \nonumber \\
&=[\eta_{\e} w]^{2}
+\iint_{\R^{6}} \frac{\eta_{\e}^2(y)w^2(y) |e^{\imath [A_{\e}(\frac{x+y}{2})-A(0)]\cdot (x-y)}-1|^{2}}{|x-y|^{3+2s}} dx dy\\
&\quad+2\Re \iint_{\R^{6}} \frac{(\eta_{\e}(x)w(x)-\eta_{\e}(y)w(y))\eta_{\e}(y)w(y)(1-e^{-\imath [A_{\e}(\frac{x+y}{2})-A(0)]\cdot (x-y)})}{|x-y|^{3+2s}} dx dy \\
&=: [\eta_{\e} w]^{2}+X_{\e}+2Y_{\e}.
\end{align*}
Then, in view of 
$|Y_{\e}|\leq [\eta_{\e} w] \sqrt{X_{\e}}$ and \eqref{PPlem}, it is suffices to prove that $X_{\e}\rightarrow 0$ as $\e\rightarrow 0$ to deduce that \eqref{limwr*} holds.\\
Let us note that for $0<\beta<\alpha/({1+\alpha-s})$, 
\begin{equation}\label{Ye}
\begin{split}
X_{\e}
&\leq \int_{\R^{3}} w^{2}(y) dy \int_{|x-y|\geq\e^{-\beta}} \frac{|e^{\imath [A_{\e}(\frac{x+y}{2})-A(0)]\cdot (x-y)}-1|^{2}}{|x-y|^{3+2s}} dx\\
&+\int_{\R^{3}} w^{2}(y) dy  \int_{|x-y|<\e^{-\beta}} \frac{|e^{\imath [A_{\e}(\frac{x+y}{2})-A(0)]\cdot (x-y)}-1|^{2}}{|x-y|^{3+2s}} dx \\
&=:X^{1}_{\e}+X^{2}_{\e}.
\end{split}
\end{equation}
Using $|e^{\imath t}-1|^{2}\leq 4$ and $w\in H^{s}(\R^{3}, \R)$, we get
\begin{equation}\label{Ye1}
X_{\e}^{1}\leq C \int_{\R^{3}} w^{2}(y) dy \int_{\e^{-\beta}}^\infty \rho^{-1-2s} d\rho\leq C \e^{2\beta s} \rightarrow 0.
\end{equation}
Since $|e^{\imath t}-1|^{2}\leq t^{2}$ for all $t\in \R$, $A\in C^{0,\alpha}(\R^3,\R^3)$ for $\alpha\in(0,1]$, and $|x+y|^{2}\leq 2(|x-y|^{2}+4|y|^{2})$, we have
\begin{equation}\label{Ye2}
\begin{split}
X^{2}_{\e}&
	\leq \int_{\R^{3}} w^{2}(y) dy  \int_{|x-y|<\e^{-\beta}} \frac{|A_{\e}\left(\frac{x+y}{2}\right)-A(0)|^{2} }{|x-y|^{3+2s-2}} dx \\
	&\leq C\e^{2\alpha} \int_{\R^{3}} w^{2}(y) dy  \int_{|x-y|<\e^{-\beta}} \frac{|x+y|^{2\alpha} }{|x-y|^{3+2s-2}} dx \\
	&\leq C\e^{2\alpha} \left(\int_{\R^{3}} w^{2}(y) dy  \int_{|x-y|<\e^{-\beta}} \frac{1 }{|x-y|^{3+2s-2-2\alpha}} dx\right.\\
	&\qquad\qquad+ \left. \int_{\R^{3}} |y|^{2\alpha} w^{2}(y) dy  \int_{|x-y|<\e^{-\beta}} \frac{1}{|x-y|^{3+2s-2}} dx\right) \\
	&=: C\e^{2\alpha} (X^{2, 1}_{\e}+X^{2, 2}_{\e}).
	\end{split}
	\end{equation}	
	Then
	\begin{equation}\label{Ye21}
	X^{2, 1}_{\e}
	= C  \int_{\R^{3}} w^{2}(y) dy \int_0^{\e^{-\beta}} \rho^{1+2\alpha-2s} d\rho
	\leq C\e^{-2\beta(1+\alpha-s)}.
	\end{equation}
	On the other hand, using \eqref{remdecay}, we infer that
	\begin{equation}\label{Ye22}
	\begin{split}
	 X^{2, 2}_{\e}
	 &\leq C  \int_{\R^{3}} |y|^{2\alpha} w^{2}(y) dy \int_0^{\e^{-\beta}}\rho^{1-2s} d\rho  \\
	&\leq C \e^{-2\beta(1-s)} \left[\int_{B_1(0)}  w^{2}(y) dy + \int_{B_1^c(0)} \frac{1}{|y|^{2(3+2s)-2\alpha}} dy \right]  \\
	&\leq C \e^{-2\beta(1-s)}.
	\end{split}
	\end{equation}
	Taking into account \eqref{Ye}, \eqref{Ye1}, \eqref{Ye2}, \eqref{Ye21} and \eqref{Ye22} we can conclude that $X_{\e}\rightarrow 0$. Therefore \eqref{limwr} holds.  Moreover, by \eqref{PPlem}, the Dominated Convergence Theorem, and the fact that $\h$ is a Hilbert space, we can see that $|w_{\e}|=\eta_{\e}w$ strongly converges to $w$ in $H^{s}(\R^{3}, \R)$, so we deduce that
\begin{equation}\label{Poisslim}
\lim_{\e\rightarrow 0} \int_{\R^{3}}\phi_{|w_{\e}|}^{t} |w_{\e}|^{2}dx=\int_{\R^{3}}\phi_{w}^{t} w^{2}dx.
\end{equation}
Now, let $t_{\e}>0$ be the unique number such that 
\begin{equation*}
J_{\e}(t_{\e} w_{\e})=\max_{t\geq 0} J_{\e}(t w_{\e}).
\end{equation*}
Then $t_{\e}$ verifies 
\begin{equation}\label{AS1}
t_{\e}^{2}\|w_{\e}\|_{\e}^{2}+t_{\e}^{4}\int_{\R^{3}}\phi_{|w_{\e}|}^{t}|w_{\e}|^{2}dx=\int_{\R^{3}} g_{\e}(x, t_{\e}^{2} |w_{\e}|^{2}) |t_{\e}w_{\e}|^{2}dx=\int_{\R^{3}} f(t_{\e}^{2} |w_{\e}|^{2}) |t_{\e}w_{\e}|^{2}+|t_{\e}w_{\e}|^{\2} dx
\end{equation}
where we used $supp(\eta)\subset \Lambda$ and $g(x, t)=f(t)+t^{\frac{\2-2}{2}}$ on $\Lambda$.\\
Let us prove that $t_{\e}\rightarrow 1$ as $\e\rightarrow 0$. Using that $\eta=1$ in $B_{\frac{\delta}{2}}$, that $w$ is a continuous positive function, that $\frac{f(t^{2})}{t^{2}}\geq 0$ for $t>0$ and that $\2-4=\frac{2(4s-3)}{3-2s}>0$ we can see that
$$
\frac{1}{t_{\e}^{2}}\|w_{\e}\|_{\e}^{2}+\int_{\R^{3}}\phi_{|w_{\e}|}^{t}|w_{\e}|^{2}dx\geq t_{\e}^{\frac{2(4s-3)}{3-2s}} \alpha^{\2}_{0} |B_{\frac{\delta}{2}}|
$$
where $\alpha_{0}=\min_{\bar{B}_{\frac{\delta}{2}}} w>0$. So, if $t_{\e}\rightarrow \infty$ as $\e\rightarrow 0$ then we can use \eqref{limwr} and \eqref{Poisslim} to deduce that $\int_{\R^{3}}\phi_{w}^{t}w^{2}dx= \infty$ which gives a contradiction.
On the other hand, if $t_{\e}\rightarrow 0$ as $\e\rightarrow 0$ we can use \eqref{AS1}, the growth assumptions on $g$, \eqref{limwr}, \eqref{Poisslim} to infer that $\|w\|_{0}^{2}= 0$ which is impossible.
In conclusion $t_{\e}\rightarrow t_{0}\in (0, \infty)$ as $\e\rightarrow 0$.
Now, taking the limit as $\e\rightarrow 0$ in \eqref{AS1} and using \eqref{Poisslim}, \eqref{limwr}, we can see that 
\begin{equation*}
\frac{1}{t_{0}^{2}}\|w\|_{V_{0}}^{2}+\int_{\R^{3}}\phi_{w}^{t}w^{2}dx=\int_{\R^{3}} \frac{f(t_{0}^{2} w^{2})}{(t_{0}^{2}w^{2})} w^{4}dx+t_{0}^{\2-4}\int_{\R^{3}} |w_{0}|^{\2}dx.
\end{equation*}
By $w\in \mathcal{N}_{0}$ it follows that 
\begin{equation*}
\left(\frac{1}{t_{0}^{2}}-1\right)\|w\|_{V_{0}}^{2}+\int_{\R^{3}}\phi_{w}^{t}w^{2}dx=\int_{\R^{3}} \left(\frac{f(t_{0}^{2} w^{2})}{(t_{0}^{2}w^{2})}-\frac{f(w^{2})}{w^{2}}\right) w^{4}dx+(t_{0}^{\2-4}-1)\int_{\R^{3}} |w_{0}|^{\2}dx,
\end{equation*}
and in view of $(f_4)$, we can deduce that $t_{0}=1$. Then, applying the Dominated Convergence Theorem, we obtain that $\lim_{\e\rightarrow 0} J_{\e}(t_{\e} w_{\e})=J_{V_{0}}(w)=c_{V_{0}}$.
Since $c_{\e}\leq \max_{t\geq 0} J_{\e}(t w_{\e})=J_{\e}(t_{\e} w_{\e})$, we can conclude  that
$\limsup_{\e\rightarrow 0} c_{\e}\leq c_{V_{0}}$.
\end{proof}


\section{Multiple solutions for the modified problem}\label{sec4}
This section is devoted to apply the Ljusternik-Schnirelmann category theory to prove a multiplicity result for the problem \eqref{MPe}. We begin proving the following technical results.
\begin{lem}\label{prop3.3}
Let $\e_{n}\rightarrow 0$ and $(u_{n})\subset \mathcal{N}_{\e_{n}}$ be such that $J_{\e_{n}}(u_{n})\rightarrow c_{V_{0}}$. Then there exists $(\tilde{y}_{n})\subset \R^{3}$ such that $v_{n}(x)=|u_{n}|(x+\tilde{y}_{n})$ has a convergent subsequence in $H^{s}(\R^{3}, \R)$. Moreover, up to a subsequence, $y_{n}=\e_{n} \tilde{y}_{n}\rightarrow y_{0}$ for some $y_{0}\in M$.
\end{lem}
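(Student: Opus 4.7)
The plan is to carry out a standard concentration-compactness argument tailored to the magnetic fractional Schr\"odinger-Poisson setting, following the scheme of \cite{AFF, AM, LZ}.

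First, I would recover the boundedness of $(u_n)$ in $H^s_{\e_n}$ exactly as in Step 1 of Lemma \ref{PSc}, combining $J_{\e_n}(u_n)\to c_{V_0}$, $\langle J'_{\e_n}(u_n),u_n\rangle=0$, condition $(g_3)$, and $k>\theta/(\theta-2)$. By Lemma \ref{DI}, the real sequence $w_n:=|u_n|$ is bounded in $H^s(\R^3,\R)$. Next I would establish non-vanishing: there exist $R,\beta>0$ and $(\tilde y_n)\subset\R^3$ with $\liminf_n\int_{B_R(\tilde y_n)}w_n^2\,dx\geq\beta$. If this failed, Lemma \ref{Lions} would yield $w_n\to 0$ in $L^r(\R^3,\R)$ for every $r\in(2,\2)$; then Lemma \ref{poisson}$(4)$ forces the Poisson term to vanish, $(g_1)$--$(g_3)$ kill the nonlinearity outside $\Lambda_{\e_n}$ and the subcritical part inside, and one is left with a purely critical problem that contradicts $c_{V_0}<c_*$ exactly as at the end of Step 3 of Lemma \ref{PSc}.

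With $(\tilde y_n)$ in hand, set $v_n(x)=w_n(x+\tilde y_n)$ and $y_n=\e_n\tilde y_n$. Up to a subsequence, $v_n\rightharpoonup v\not\equiv 0$ in $H^s(\R^3,\R)$. I would then rule out $|y_n|\to\infty$: if this occurred, the translated balls $B_R(\tilde y_n)$ would eventually lie in $\Lambda_{\e_n}^c$, so that after translation and Fatou, $v$ would inherit the constraint
$$
\|v\|_{V_0}^2+\int_{\R^3}\phi_{v}^{t}v^2\,dx\leq\tfrac{1}{k}\int_{\R^3} V_0 v^2\,dx,
$$
which is impossible for $v\not\equiv 0$ since $k>1$. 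Hence, up to a subsequence, $y_n\to y_0\in\R^3$. To prove $y_0\in M$, I would use weak lower semicontinuity, Lemma \ref{DI}, and Fatou's lemma on the potential $V(\e_n\cdot+y_n)$ to obtain $c_{V_0}=\lim_n J_{\e_n}(u_n)\geq J_{V(y_0)}(\hat v)\geq c_{V(y_0)}$, where $\hat v$ is a suitable rescaling of $v$ projected on $\mathcal{N}_{V(y_0)}$. The strict monotonicity $\mu\mapsto c_\mu$ (an easy consequence of $(f_4)$ and the Nehari characterization) together with $(V_1)$ then forces $V(y_0)=V_0$, and the same inequality combined with $(V_2)$ rules out $y_0\notin\overline\Lambda$; hence $y_0\in M$.

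Finally, to upgrade the weak convergence $v_n\rightharpoonup v$ to strong convergence in $H^s(\R^3,\R)$, I would rescale by $t_n>0$ so that $t_n v_n\in\mathcal{N}_{V_0}$, verify $t_n\to 1$ using $v\not\equiv 0$ and $(f_4)$, and then apply Lemma \ref{FS} to conclude. I expect the main obstacle to be the systematic loss introduced by the diamagnetic inequality $[u_n]_{A_{\e_n}}\geq[|u_n|]$: every natural comparison between the magnetic energy of $u_n$ and the ordinary energy of $v_n$ points in the wrong direction, so equality of the limiting norms can only be recovered by squeezing $c_{V_0}\geq c_{V(y_0)}\geq c_{V_0}$ through the strict monotonicity of the ground state level and reading equality back into every intermediate inequality.
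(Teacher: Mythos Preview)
Your overall strategy is correct, but the \emph{order} of the steps creates a genuine gap. You attempt to rule out $|y_n|\to\infty$ and to prove $y_0\in M$ \emph{before} upgrading $v_n\rightharpoonup v$ to strong convergence. Both of these location arguments, as you sketch them, actually require strong convergence. For the boundedness of $(y_n)$: after translating the Nehari identity and using the diamagnetic inequality you obtain
\[
\|v_n\|_{V_0}^{2}\leq \int_{B_{R/\e_n}}\tilde f(v_n^{2})v_n^{2}\,dz+\int_{\R^{3}\setminus B_{R/\e_n}}\bigl(f(v_n^{2})v_n^{2}+v_n^{\2}\bigr)\,dz.
\]
Fatou on the left gives $\|v\|_{V_0}^{2}\leq\liminf_n(\cdot)$, but on the right you need the \emph{second} integral to vanish; with only weak convergence there is no reason for $\int_{|z|>R/\e_n}v_n^{\2}\,dz\to 0$. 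Similarly, your inequality $c_{V_0}=\lim J_{\e_n}(u_n)\geq J_{V(y_0)}(\hat v)$ cannot be obtained by Fatou on $V(\e_n\cdot+y_n)$ unless you already know $\tilde v_n\to\tilde v$ strongly (Fatou alone goes the wrong way on the nonlinear terms, and $V_{\e_n}\geq V(y_0)$ is false in general).

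The paper repairs this by reversing the order. The key observation---which you hint at only at the very end---is that the sandwiching
\[
c_{V_0}\leq J_{V_0}(t_n v_n)\leq \max_{t\geq 0}J_{\e_n}(t u_n)=J_{\e_n}(u_n)\to c_{V_0}
\]
holds for \emph{every} $n$ with no information on $(\tilde y_n)$ whatsoever: it uses only the diamagnetic inequality $[|u_n|]\leq[u_n]_{A_{\e_n}}$, the bound $V_0\leq V$, and $(g_2)$. This immediately yields $J_{V_0}(t_n v_n)\to c_{V_0}$, so Lemma~\ref{FS} applies and gives $\tilde v_n\to\tilde v$ (hence $v_n\to v$) strongly in $H^{s}(\R^{3},\R)$. \emph{With strong convergence in hand}, the tail integral above does vanish, forcing $v=0$ if $|y_n|\to\infty$; and the Fatou argument on $V(\e_n\cdot+y_n)$ becomes legitimate, yielding $V(y_0)=V_0$. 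Thus your ingredients are the right ones, but you must run the squeezing/Lemma~\ref{FS} step \emph{first} and use the resulting strong convergence to drive the two location arguments.
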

\begin{proof}
Taking into account that $\langle J'_{\e_{n}}(u_{n}), u_{n}\rangle=0$, that $J_{\e_{n}}(u_{n})= c_{V_{0}}+o_{n}(1)$, Lemma \ref{AMlem1} and arguing as in the first part of Lemma \ref{PSc}, it is easy to see that there exists $C>0$ (independent of $n$) such that $\|u_{n}\|_{\e_{n}}\leq C$ for all $n\in \mathbb{N}$. Moreover, from Lemma \ref{DI}, we also know that $(|u_{n}|)$ is bounded in $H^{s}(\R^{3}, \R)$.
Now, we prove that 
there exist a sequence $(\tilde{y}_{n})\subset \R^{3}$, and constants $R>0$ and $\gamma>0$ such that
\begin{equation}\label{sacchi}
\liminf_{n\rightarrow \infty}\int_{B_{R}(\tilde{y}_{n})} |u_{n}|^{2} \, dx\geq \gamma>0.
\end{equation}
If by contradiction \eqref{sacchi} does not hold, then for all $R>0$ we get
$$
\lim_{n\rightarrow \infty}\sup_{y\in \R^{3}}\int_{B_{R}(y)} |u_{n}|^{2} \, dx=0.
$$
From the boundedness $(|u_{n}|)$ and Lemma \ref{Lions} we can see that $|u_{n}|\rightarrow 0$ in $L^{q}(\R^{3}, \R)$ for any $q\in (2, 2^{*}_{s})$. 
This fact combined with $(f_1)$ and $(f_2)$ gives
\begin{align}\label{flimiti}
\lim_{n\rightarrow \infty}\int_{\R^{3}} f(|u_{n}|^{2})|u_{n}|^{2} \,dx=0= \lim_{n\rightarrow \infty}\int_{\R^{3}} F(|u_{n}|^{2}) \, dx.
\end{align}
Moreover $|u_{n}|\rightarrow 0$ in $L^{\frac{12}{3+2t}}(\R^{3}, \R)$, so using $(4)$-Lemma \ref{poisson} we deduce that
\begin{align}\label{CSSq1}
\int_{\R^{3}} \phi_{|u_{n}|}^{t}|u_{n}|^{2}dx\rightarrow 0.
\end{align}
Therefore
\begin{equation}\label{7ADOM}
\int_{\R^{3}} G_{\e_{n}}(x, |u_{n}|^{2})\, dx\leq \frac{1}{\2}\int_{\Lambda_{\e}\cup \{|u_{n}|^{2}\leq t_{a}\}} |u_{n}|^{\2}\, dx+\frac{V_{0}}{2k}\int_{\Lambda^{c}_{\e}\cap \{|u_{n}|^{2}> T_{a}\}} |u_{n}|^{2}\, dx+o_{n}(1)
\end{equation}
and
\begin{equation}\label{8ADOM}
\int_{\R^{3}} g_{\e_{n}}(x, |u_{n}|^{2})|u_{n}|^{2}\, dx=\int_{\Lambda_{\e}\cup \{|u_{n}|^{2}\leq t_{a}\}} |u_{n}|^{\2}\, dx+\frac{V_{0}}{k}\int_{\Lambda^{c}_{\e}\cap \{|u_{n}|^{2}> T_{a}\}} |u_{n}|^{2}\, dx+o_{n}(1).
\end{equation}
Using \eqref{CSSq1}, \eqref{8ADOM} and $\langle J'_{\e_{n}}(u_{n}), u_{n}\rangle=0$ we can deduce that
\begin{equation}\label{9ADOM}
\|u_{n}\|_{\e_{n}}^{2}-\frac{V_{0}}{k}\int_{\Lambda^{c}_{\e}\cap \{|u_{n}|^{2}> T_{a}\}} |u_{n}|^{2}\, dx=\int_{\Lambda_{\e}\cup \{|u_{n}|^{2}\leq t_{a}\}} |u_{n}|^{\2}\, dx.
\end{equation}
Let $\ell\geq 0$ be such that 
$$
\|u_{n}\|_{\e_{n}}^{2}-\frac{V_{0}}{k}\int_{\Lambda^{c}_{\e}\cap \{|u_{n}|^{2}> T_{a}\}} |u_{n}|^{2}\, dx\rightarrow \ell.
$$
If $\ell=0$, then $u_{n}\rightarrow 0$ in $\h$ so that $J_{\e_{n}}(u_{n})\rightarrow 0$ which contradicts $c_{V_{0}}>0$. Then $\ell>0$. In view of \eqref{9ADOM} we can see that $\int_{\Lambda_{\e}\cup \{|u_{n}|^{2}\leq t_{a}\}} |u_{n}|^{\2}\, dx\rightarrow \ell$.
Taking into account $J_{\e_{n}}(u_{n})\rightarrow c_{V_{0}}$, \eqref{7ADOM} and $\langle J'_{\e_{n}}(u_{n}), u_{n}\rangle=0$ we can deduce that $\ell\leq \frac{3}{s} c_{V_{0}}$. From Lemma \ref{DI} and the definition of $S_{*}$, we know that
$$
\|u_{n}\|_{\e_{n}}^{2}-\frac{V_{0}}{k}\int_{\Lambda^{c}_{\e}\cap \{|u_{n}|^{2}> T_{a}\}} |u_{n}|^{2}\, dx\geq S_{*} \left(\int_{\Lambda_{\e}\cup \{|u_{n}|^{2}\leq t_{a}\}} |u_{n}|^{\2}\, dx\right)^{2/\2},
$$
and letting the limit as $n\rightarrow \infty$ we find $\ell\geq S_{*}\ell^{2/\2}$ which combined with $\ell\leq \frac{3}{s} c_{V_{0}}$ implies that $c_{V_{0}}\geq \frac{s}{3}S_{*}^{\frac{3}{2s}}$ which is impossible in view of Lemma \ref{AMlem1}. Therefore \eqref{sacchi} holds.

Now, we set $v_{n}(x)=|u_{n}|(x+\tilde{y}_{n})$. Then $(v_{n})$ is bounded in $H^{s}(\R^{3}, \R)$, and we may assume that 
$v_{n}\rightharpoonup v\not\equiv 0$ in $H^{s}(\R^{3}, \R)$  as $n\rightarrow \infty$.
Fix $t_{n}>0$ such that $\tilde{v}_{n}=t_{n} v_{n}\in \mathcal{N}_{V_{0}}$. Using Lemma \ref{DI}, we can see that 
$$
c_{V_{0}}\leq J_{V_{0}}(\tilde{v}_{n})\leq \max_{t\geq 0}J_{\e_{n}}(tv_{n})= J_{\e_{n}}(u_{n})
$$
which together with Lemma \ref{AMlem1} implies that $J_{V_{0}}(\tilde{v}_{n})\rightarrow c_{V_{0}}$. In particular, $\tilde{v}_{n}\nrightarrow 0$ in $H^{s}(\R^{3}, \R)$.
Since $(v_{n})$ and $(\tilde{v}_{n})$ are bounded in $H^{s}(\R^{3}, \R)$ and $\tilde{v}_{n}\nrightarrow 0$  in $H^{s}(\R^{3}, \R)$, we deduce that $t_{n}\rightarrow t^{*}\geq 0$. Indeed $t^{*}>0$ since $\tilde{v}_{n}\nrightarrow 0$  in $H^{s}(\R^{3}, \R)$. From the uniqueness of the weak limit, we can deduce that $\tilde{v}_{n}\rightharpoonup \tilde{v}=t^{*}v\not\equiv 0$ in $H^{s}(\R^{3}, \R)$. 
This combined with Lemma \ref{FS} yields
\begin{equation}\label{elena}
\tilde{v}_{n}\rightarrow \tilde{v} \mbox{ in } H^{s}(\R^{3}, \R).
\end{equation} 
As a consequence, $v_{n}\rightarrow v$ in $H^{s}(\R^{3}, \R)$ as $n\rightarrow \infty$.

Now, we set $y_{n}=\e_{n}\tilde{y}_{n}$ and we show that $(y_{n})$ admits a subsequence, still denoted by $y_{n}$, such that $y_{n}\rightarrow y_{0}$ for some $y_{0}\in \Lambda$ such that $V(y_{0})=V_{0}$. Firstly, we prove that $(y_{n})$ is bounded. Assume by contradiction that, up to a subsequence, $|y_{n}|\rightarrow \infty$ as $n\rightarrow \infty$. Take $R>0$ such that $\Lambda \subset B_{R}(0)$. Since we may suppose that  $|y_{n}|>2R$, we have that for any $z\in B_{R/\e_{n}}$ 
$$
|\e_{n}z+y_{n}|\geq |y_{n}|-|\e_{n}z|>R.
$$
Now, using $(u_{n})\subset \N_{\e_{n}}$, $(V_{1})$, Lemma \ref{DI}, Lemma \ref{poisson}, the definition of $g$ and the change of variable $x\mapsto z+\tilde{y}_{n}$ we observe that 
\begin{align*}
[v_{n}]^{2}+\int_{\R^{3}} V_{0} v_{n}^{2}\, dx &\leq [v_{n}]^{2}+\int_{\R^{3}} V_{0} v_{n}^{2}\, dx+\int_{\R^{3}} \phi_{|v_{n}|}^{t}|v_{n}|^{2}dx \nonumber \\
&\leq\int_{\R^{3}} g(\e_{n} x+y_{n}, |v_{n}|^{2}) |v_{n}|^{2} \, dx \nonumber\\
&\leq \int_{B_{\frac{R}{\e_{n}}}(0)} \tilde{f}(|v_{n}|^{2}) |v_{n}|^{2} \, dx+\int_{\R^{3}\setminus B_{\frac{R}{\e_{n}}}(0)} f(|v_{n}|^{2}) |v_{n}|^{2}+|v_{n}|^{\2} \, dx \nonumber \\
&\leq \frac{V_{0}}{k}\int_{\R^{3}}|v_{n}|^{2}\, dx.
\end{align*}
which implies that
$v_{n}\rightarrow 0$ in $H^{s}(\R^{3}, \R)$, that is a contradiction. Therefore, $(y_{n})$ is bounded and we may assume that $y_{n}\rightarrow y_{0}\in \R^{3}$. If $y_{0}\notin \overline{\Lambda}$, then we can argue as before to infer that $v_{n}\rightarrow 0$ in $H^{s}(\R^{3}, \R)$, which is impossible. Hence $y_{0}\in \overline{\Lambda}$. 
Now, suppose by contradiction that $V(y_{0})>V_{0}$.
Then, using (\ref{elena}), Fatou's Lemma, the invariance of  $\R^{3}$ by translations, Lemma \ref{DI} and Lemma \ref{AMlem1}, we get 
\begin{align*}
c_{V_{0}}=J_{V_{0}}(\tilde{v})&<\frac{1}{2}[\tilde{v}]^{2}+\frac{1}{2}\int_{\R^{3}} V(y_{0})\tilde{v}^{2} \, dx+\frac{1}{4}\int_{\R^{3}}\phi_{|\tilde{v}|}^{t}\tilde{v}^{2}dx-\frac{1}{2}\int_{\R^{3}} F(|\tilde{v}|^{2})+\frac{1}{\2}\int_{\R^{3}} |\tilde{v}|^{\2}\, dx\\
&\leq \liminf_{n\rightarrow \infty}\Bigl[\frac{1}{2}[\tilde{v}_{n}]^{2}+\frac{1}{2}\int_{\R^{3}} V(\e_{n}x+y_{n}) |\tilde{v}_{n}|^{2} \, dx+\frac{1}{4}\int_{\R^{3}}\phi_{|\tilde{v}_{n}|}^{t}|\tilde{v}_{n}|^{2}dx\\
&-\frac{1}{2}\int_{\R^{3}} F(|\tilde{v}_{n}|^{2})+\frac{1}{\2}\int_{\R^{3}} |\tilde{v_{n}}|^{\2} \, dx  \Bigr] \\
&\leq \liminf_{n\rightarrow \infty}\Bigl[\frac{t_{n}^{2}}{2}[|u_{n}|]^{2}+\frac{t_{n}^{2}}{2}\int_{\R^{3}} V(\e_{n}z) |u_{n}|^{2} \, dz
+ \frac{t_{n}^{4}}{4}\int_{\R^{3}}\phi_{|u_{n}|}^{t}|u_{n}|^{2}dx \\
&-\frac{1}{2}\int_{\R^{3}} F(|t_{n} u_{n}|^{2})+\frac{t_{n}^{\2}}{\2}\int_{\R^{3}}|u_{n}|^{\2} \, dz  \Bigr] \\
&\leq \liminf_{n\rightarrow \infty} J_{\e_{n}}(t_{n} u_{n}) \leq \liminf_{n\rightarrow \infty} J_{\e_{n}}(u_{n})= c_{V_{0}}
\end{align*}
which gives a contradiction. Hence, $y_{0}\in M$ and this ends the proof of lemma.

\end{proof}

\noindent
Now, we aim to relate the number of positive solutions of \eqref{Pe} to the topology of the set $\Lambda$.
For this reason, we take $\delta>0$ such that
$$
M_{\delta}=\{x\in \R^{3}: {\rm dist}(x, M)\leq \delta\}\subset \Lambda,
$$
and we consider $\eta\in C^{\infty}_{0}(\R_{+}, [0, 1])$ such that $\eta(t)=1$ if $0\leq t\leq \frac{\delta}{2}$ and $\eta(t)=0$ if $t\geq \delta$.\\
For any $y\in \Lambda$, we introduce (see \cite{AD})
$$
\Psi_{\e, y}(x)=\eta(|\e x-y|) w\left(\frac{\e x-y}{\e}\right)e^{\imath \tau_{y} \left( \frac{\e x-y}{\e} \right)},
$$
where $\tau_{y}(x)=\sum_{j=1}^{3} A_{j}(x)x_{j}$ and $w\in H^{s}(\R^{3})$ is a positive ground state solution to the autonomous problem \eqref{AP0} (such a solution exists in view of Lemma \ref{FS}).

Let $t_{\e}>0$ be the unique number such that 
$$
\max_{t\geq 0} J_{\e}(t \Psi_{\e, y})=J_{\e}(t_{\e} \Psi_{\e, y}). 
$$
Finally, we consider $\Phi_{\e}: M\rightarrow \N_{\e}$ defined by setting
$$
\Phi_{\e}(y)= t_{\e} \Psi_{\e, y}.
$$
\begin{lem}\label{lem3.4}
The functional $\Phi_{\e}$ satisfies the following limit
\begin{equation*}
\lim_{\e\rightarrow 0} J_{\e}(\Phi_{\e}(y))=c_{V_{0}} \mbox{ uniformly in } y\in M.
\end{equation*}
\end{lem}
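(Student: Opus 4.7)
The plan is to combine a change of variable centered at $y$ with the asymptotic analysis already carried out in Lemma \ref{AMlem1}, and then promote the pointwise convergence to uniformity in $y\in M$ by a standard subsequence argument. Concretely, I would argue by contradiction: if the claim fails, there exist $\e_{n}\downarrow 0$ and $y_{n}\in M$ with $|J_{\e_{n}}(\Phi_{\e_{n}}(y_{n}))-c_{V_{0}}|\geq \epsilon_{0}>0$. Since $M$ is compact, up to a subsequence $y_{n}\to y_{0}\in M$, so $V(y_{0})=V_{0}$. For $\e_{n}$ small enough, $\supp\Psi_{\e_{n},y_{n}}\subset \Lambda_{\e_{n}}$ (because $y_{n}\in M$ and $M_{\delta}\subset\Lambda$), so $g$ reduces to $f(t)+(t^{+})^{(\2-2)/2}$ on the support, and $J_{\e_{n}}(\Phi_{\e_{n}}(y_{n}))$ becomes essentially a shifted version of the autonomous functional $J_{V_{0}}$.

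After the translation $z = x - y_{n}/\e_{n}$, I would establish the three limits
\begin{align*}
\|\Psi_{\e_{n},y_{n}}\|_{\e_{n}}^{2} &\longrightarrow \|w\|_{V_{0}}^{2},\\
\int_{\R^{3}} \phi^{t}_{|\Psi_{\e_{n},y_{n}}|}|\Psi_{\e_{n},y_{n}}|^{2}\,dx &\longrightarrow \int_{\R^{3}}\phi^{t}_{w} w^{2}\,dx,\\
\int_{\R^{3}} G_{\e_{n}}(x,|\Psi_{\e_{n},y_{n}}|^{2})\,dx &\longrightarrow \int_{\R^{3}} F(w^{2})+\tfrac{1}{\2}w^{\2}\,dx.
\end{align*}
The potential piece comes from $V(\e_{n} z+y_{n})\to V(y_{0})=V_{0}$ together with dominated convergence; the Poisson piece follows from $|\Psi_{\e_{n},y_{n}}(\cdot+y_{n}/\e_{n})|\to w$ in $L^{12/(3+2t)}(\R^{3},\R)$ combined with Lemma \ref{poisson}; the nonlinear piece uses $\eta(|\e_{n}\cdot|)w\to w$ in every $L^{q}$, $q\in[2,\2]$, together with $(f_{1})$--$(f_{2})$. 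The hard part is the magnetic Gagliardo seminorm $[\Psi_{\e_{n},y_{n}}]^{2}_{A_{\e_{n}}}$: after factoring the phases $e^{\imath\tau_{y_{n}}}$ and $e^{\imath(x-y)\cdot A_{\e_{n}}((x+y)/2)}$, one obtains an oscillatory remainder identical in structure to the $X_{\e}, Y_{\e}$ of Lemma \ref{AMlem1}. The key estimate is again the H\"older continuity of $A$, which yields a difference of order $O(\e_{n}^{\alpha})$ between $A_{\e_{n}}((x+y)/2)$ and the differential of $\tau_{y_{n}}$ near the relevant region, while the decay \eqref{remdecay} of $w$ at infinity guarantees that the resulting bounds are uniform in $y_{n}\in M$.

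With those three limits in hand, the Nehari identity defining $t_{\e_{n}}$ reads
\begin{equation*}
t_{\e_{n}}^{2}\|\Psi_{\e_{n},y_{n}}\|_{\e_{n}}^{2}+t_{\e_{n}}^{4}\int_{\R^{3}}\phi^{t}_{|\Psi_{\e_{n},y_{n}}|}|\Psi_{\e_{n},y_{n}}|^{2}\,dx=\int_{\R^{3}}\bigl[f(t_{\e_{n}}^{2}|\Psi_{\e_{n},y_{n}}|^{2})t_{\e_{n}}^{2}|\Psi_{\e_{n},y_{n}}|^{2}+t_{\e_{n}}^{\2}|\Psi_{\e_{n},y_{n}}|^{\2}\bigr]\,dx.
\end{equation*}
Mimicking the last part of Lemma \ref{AMlem1}, I would rule out $t_{\e_{n}}\to 0$ (which would force $\|w\|_{V_{0}}=0$ via $(g_{1})$) and $t_{\e_{n}}\to\infty$ (impossible because $\2>4$ and $w$ is uniformly bounded below on $B_{\delta/2}$), deduce $t_{\e_{n}}\to t_{0}\in(0,\infty)$, and then use $(f_{4})$ together with $w\in\mathcal{N}_{V_{0}}$ to conclude $t_{0}=1$.

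The dominated convergence theorem then gives $J_{\e_{n}}(\Phi_{\e_{n}}(y_{n}))=J_{\e_{n}}(t_{\e_{n}}\Psi_{\e_{n},y_{n}})\to J_{V_{0}}(w)=c_{V_{0}}$, contradicting the initial choice of $(\e_{n},y_{n})$ and thus proving uniform convergence. The main obstacle is the magnetic Gagliardo seminorm estimate, where one must simultaneously handle the translation by $y_{n}/\e_{n}$, the phase $e^{\imath\tau_{y_{n}}}$ and the scaled magnetic potential $A(\e_{n}\cdot)$, and check that all error bounds depend only on $\|w\|_{H^{s}}$, the integrability of $|\cdot|^{2\alpha}w^{2}$ secured by \eqref{remdecay}, and the H\"older seminorm of $A$, so that they are genuinely uniform in $y_{n}\in M$.
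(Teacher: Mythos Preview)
Your proposal is correct and follows essentially the same route as the paper: contradiction, extraction of sequences $(\e_{n},y_{n})$, convergence of the three pieces $\|\Psi_{\e_{n},y_{n}}\|_{\e_{n}}^{2}$, the Poisson term, and the nonlinear term, then the standard $t_{\e_{n}}\to 1$ argument via $(f_{4})$. The only cosmetic differences are that the paper does not bother passing to a subsequence $y_{n}\to y_{0}$ (the needed limits hold for any sequence $y_{n}\in M$ since $V(y_{n})\equiv V_{0}$ and $M$ is bounded), and that the paper outsources the magnetic Gagliardo seminorm convergence to Lemma~4.1 in \cite{AD} rather than redoing the $X_{\e},Y_{\e}$ decomposition of Lemma~\ref{AMlem1}; your plan to reproduce that computation with the extra translation and the phase $\tau_{y_{n}}$ is exactly what that cited lemma contains.
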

\begin{proof}
Assume by contradiction that there exist $\delta_{0}>0$, $(y_{n})\subset M$ and $\e_{n}\rightarrow 0$ such that 
\begin{equation}\label{puac}
|J_{\e_{n}}(\Phi_{\e_{n}}(y_{n}))-c_{V_{0}}|\geq \delta_{0}.
\end{equation}
Let us observe that by Lemma $4.1$ in \cite{AD} and the Dominated Convergence Theorem we get  
\begin{align}\begin{split}\label{nio3}
&\| \Psi_{\e_{n}, y_{n}} \|^{2}_{\e_{n}}\rightarrow \|w\|^{2}_{V_{0}}\in (0, \infty) \, \mbox{ and } \int_{\R^{3}}  \phi_{|\Psi_{\e_{n}, y_{n}}|}^{t} |\Psi_{\e_{n}, y_{n}}|^{2}dx\rightarrow \int_{\R^{3}}  \phi_{w}^{t} w^{2}dx \\
&\|\Psi_{\e_{n}, y_{n}}\|_{L^{2^{*}_{s}}(\R^{3})}\rightarrow \|w\|_{L^{2^{*}_{s}}(\R^{3})}. 
\end{split}\end{align}
Concerning the second limit in \eqref{nio3}, we note that $|\Psi_{\e, y}|=\eta(|\e x-y|) w\left(\frac{\e x-y}{\e}\right)$  converges strongly to  $w$ in $H^{s}(\R^{3}, \R)$, so we use the following property (see $(6)$ of Lemma $2.3$ in \cite{teng}):
$$
\mbox{ if } u_{n}\rightarrow u \mbox{ in } H^{s}(\R^{3}, \R)\mbox{ then } \int_{\R^{3}} \phi_{u_{n}}^{t}u_{n}^{2}\, dx\rightarrow  \int_{\R^{3}} \phi_{u}^{t}u^{2}\, dx.
$$ 
On the other hand, since $\langle J'_{\e_{n}}(\Phi_{\e_{n}}(y_{n})),\Phi_{\e_{n}}(y_{n})\rangle=0$ and using the change of variable $\displaystyle{z=\frac{\e_{n}x-y_{n}}{\e_{n}}}$ it follows that
\begin{align*}
&t_{\e_{n}}^{2}\|\Psi_{\e_{n}, y_{n}}\|_{\e_{n}}^{2} +t_{\e_{n}}^{4} \int_{\R^{3}} \phi^{t}_{|\Psi_{\e_{n}, y_{n}}|}|\Psi_{\e_{n}, y_{n}}|^{2}dz\nonumber \\
&=\int_{\R^{3}} g(\e_{n}z+y_{n}, |t_{\e_{n}}\eta(|\e_{n}z|)w(z)|^{2}) |t_{\e_{n}}\eta(|\e_{n}z|)w(z)|^{2} dz.
\end{align*}
If $z\in B_{\frac{\delta}{\e_{n}}}(0)\subset M_{\delta}\subset \Lambda$, then $\e_{n} z+y_{n}\in B_{\delta}(y_{n})\subset M_{\delta}\subset \Lambda_{\e}$. Thus, being $g(x,t)=f(t)+t^{\frac{\2-2}{2}}$ for all $x\in \Lambda$ and $\eta(t)=0$ for $t\geq \delta$, we get
\begin{align}\label{1nio}
&t_{\e_{n}}^{2}\|\Psi_{\e_{n}, y_{n}}\|_{\e_{n}}^{2} +t_{\e_{n}}^{4} \int_{\R^{3}} \phi^{t}_{|\Psi_{\e_{n}, y_{n}}|}|\Psi_{\e_{n}, y_{n}}|^{2}dz\nonumber \\
&=\int_{\R^{3}} f(|t_{\e_{n}}\eta(|\e_{n}z|)w(z)|^{2}) |t_{\e_{n}}\eta(|\e_{n}z|)w(z)|^{2}+|t_{\e_{n}}\eta(|\e_{n}z|)w(z)|^{\2} dz.
\end{align}
Since $\eta=1$ in $B_{\frac{\delta}{2}}(0)\subset B_{\frac{\delta}{\e_{n}}}(0)$ for all $n$ large enough, we get from \eqref{1nio}
\begin{align}\label{nioo}
&\frac{1}{t_{\e_{n}}^{2}} \|\Psi_{\e_{n}, y_{n}}\|_{\e}^{2}+\int_{\R^{3}} \phi_{|\Psi_{\e_{n}, y_{n}}|}^{t} \Psi_{\e_{n}, y_{n}}^{2}dx \nonumber \\
&=\int_{\R^{3}} \frac{f(|t_{\e_{n}}\Psi_{\e_{n},y_{n}}|^{2})+|t_{\e_{n}}\Psi_{\e_{n}, y_{n}}|^{\2-2}}{|t_{\e_{n}}\Psi_{\e_{n}, y_{n}}|^{2}}  |\Psi_{\e_{n}, y_{n}}|^{4}dx \nonumber \\
&\geq t_{\e_{n}}^{2^{*}_{s}-4} \int_{B_{\frac{\delta}{2}}(0)}  |w(z)|^{2^{*}_{s}} \, dz \nonumber \\
&\geq  t_{\e_{n}}^{\frac{2(4s-3)}{3-2s}} w(\hat{z})^{\2} |B_{\frac{\delta}{2}}(0)|, 
\end{align}
where
\begin{equation*}
w(\hat{z})=\min_{z\in B_{\frac{\delta}{2}}} w(z)>0.
\end{equation*} 
Now, assume by contradiction that $t_{\e_{n}}\rightarrow \infty$. 
So, using $t_{\e_{n}}\rightarrow \infty$, $s\in (\frac{3}{4},1)$, \eqref{nio3} and \eqref{nioo} we obtain 
$$
\int_{\R^{3}}  \phi_{w}^{t} w^{2}dx=\infty,
$$
that is a contradiction.
Therefore $(t_{\e_{n}})$ is bounded and, up to subsequence, we may assume that $t_{\e_{n}}\rightarrow t_{0}$ for some $t_{0}\geq 0$.  
Let us prove that $t_{0}>0$. Suppose by contradiction that $t_{0}=0$. 
Then, taking into account \eqref{nio3} and assumptions $(g_1)$ and $(g_2)$, we can see that \eqref{1nio} yields
\begin{align*}
\|t_{\e_{n}} \Psi_{\e_{n}, y_{n}}\|_{\e_{n}}^{2}\rightarrow 0
\end{align*}
which is impossible because of \eqref{uNr}. 
Hence $t_{0}>0$.
Thus, letting the limit as $n\rightarrow \infty$ in \eqref{1nio}, we deduce that
\begin{align*}
\frac{1}{t_{0}^{2}}\|w\|^{2}_{V_{0}}+\int_{\R^{3}} \phi_{w}^{t} w^{2}dx=\int_{\R^{3}} \frac{f((t_{0} w)^{2})+(t_{0}w)^{\2-2}}{(t_{0}w)^{2}} \,w^{4} \, dx.
\end{align*}
Taking into account that $w\in \N_{V_{0}}$ and condition $(f_4)$ we can infer that $t_{0}=1$.
Then, letting the limit as $n\rightarrow \infty$ and using that $t_{\e_{n}}\rightarrow 1$ 
we can conclude that
$$
\lim_{n\rightarrow \infty} J_{\e_{n}}(\Phi_{\e_{n}, y_{n}})=J_{V_{0}}(w)=c_{V_{0}},
$$
which contradicts \eqref{puac}.
\end{proof}

\noindent
At this point, we are in the position to define the barycenter map. For any $\delta>0$, we take $\rho=\rho(\delta)>0$ such that $M_{\delta}\subset B_{\rho}$, and we consider $\varUpsilon: \R^{3}\rightarrow \R^{3}$ defined by setting
\begin{equation*}
\varUpsilon(x)=
\left\{
\begin{array}{ll}
x &\mbox{ if } |x|<\rho \\
\frac{\rho x}{|x|} &\mbox{ if } |x|\geq \rho.
\end{array}
\right.
\end{equation*}
We define the barycenter map $\beta_{\e}: \N_{\e}\rightarrow \R^{3}$ as follows
\begin{align*}
\beta_{\e}(u)=\frac{\displaystyle{\int_{\R^{3}} \varUpsilon(\e x)|u(x)|^{4} \,dx}}{\displaystyle{\int_{\R^{3}} |u(x)|^{4} \,dx}}.
\end{align*}

\noindent
Arguing as Lemma $4.3$ in \cite{AD}, it is easy to see that the function $\beta_{\e}$ verifies the following limit:
\begin{lem}\label{lem3.5N}
\begin{equation*}
\lim_{\e \rightarrow 0} \beta_{\e}(\Phi_{\e}(y))=y \mbox{ uniformly in } y\in M.
\end{equation*}
\end{lem}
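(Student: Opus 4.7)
The plan is to argue by contradiction and to reduce the statement to a direct computation followed by an application of the Dominated Convergence Theorem, mimicking the approach used in Lemma $4.3$ of \cite{AD}. Suppose the conclusion fails. Then there exist $\delta_{0}>0$, a sequence $\e_{n}\to 0$, and points $y_{n}\in M$ such that
$$
|\beta_{\e_{n}}(\Phi_{\e_{n}}(y_{n}))-y_{n}|\geq \delta_{0} \quad \mbox{for all } n\in \mathbb{N}.
$$
Since $M$ is compact, up to a subsequence we may assume $y_{n}\to y_{0}\in M\subset B_{\rho}$, so that $\varUpsilon(y_{0})=y_{0}$.

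The key observation is that the phase factor and the factor $t_{\e}^{4}$ in $\Phi_{\e}(y)=t_{\e}\Psi_{\e,y}$ both disappear in the expression of $\beta_{\e}$: indeed,
$$
|\Phi_{\e}(y)(x)|^{4}=t_{\e}^{4}\,\eta(|\e x-y|)^{4}\, w\!\left(\frac{\e x-y}{\e}\right)^{4},
$$
and the $t_{\e}^{4}$ cancels between numerator and denominator in $\beta_{\e}(\Phi_{\e}(y))$. Performing the change of variable $z=(\e x-y)/\e$, we obtain the compact formula
$$
\beta_{\e}(\Phi_{\e}(y))=y+\frac{\displaystyle\int_{\R^{3}}\bigl[\varUpsilon(\e z+y)-y\bigr]\,\eta(|\e z|)^{4}\,w(z)^{4}\,dz}{\displaystyle\int_{\R^{3}}\eta(|\e z|)^{4}\,w(z)^{4}\,dz},
$$
where I have used that $\int_{\R^{3}}\eta(|\e z|)^{4}w(z)^{4}\bigl(\varUpsilon(\e z+y)-y\bigr)dz$ is well-defined since $\varUpsilon$ is bounded and $w\in L^{4}(\R^{3},\R)$ (which follows from $w\in H^{s}(\R^{3},\R)$ and $4\in [2,2^{*}_{s}]$, recalling that $s>3/4$ gives $2^{*}_{s}>4$).

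It then suffices to show that, as $\e_{n}\to 0$, the numerator tends to $0$ and the denominator tends to $\int_{\R^{3}}w(z)^{4}\,dz>0$, uniformly in $y_{n}\in M$. For the denominator, $\eta(|\e_{n} z|)\to 1$ pointwise and $\eta(|\e_{n} z|)^{4}w(z)^{4}\leq w(z)^{4}\in L^{1}(\R^{3})$, so dominated convergence applies. For the numerator, note that $\varUpsilon$ is continuous and bounded by $\rho$, so $|\varUpsilon(\e_{n} z+y_{n})-y_{n}|\leq 2\rho$ and, since $y_{n}\to y_{0}\in B_{\rho}$ and $\e_{n}z+y_{n}\to y_{0}$, the continuity of $\varUpsilon$ yields
$$
\varUpsilon(\e_{n} z+y_{n})-y_{n}\longrightarrow \varUpsilon(y_{0})-y_{0}=0 \quad \mbox{a.e.\ in } z\in \R^{3}.
$$
Dominated convergence (with integrable majorant $2\rho\,w(z)^{4}$) gives the vanishing of the numerator.

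Since the convergence can be made uniform in $y_{n}\in M$ by the compactness of $M$ and uniform continuity of $\varUpsilon$ on bounded sets (one argues by splitting the integral over a large ball $B_{R}$ where the uniform convergence holds, and over $\R^{3}\setminus B_{R}$ where the integrand is controlled by $2\rho\, w(z)^{4}$, the tail of which is small uniformly in $y$), we conclude that $\beta_{\e_{n}}(\Phi_{\e_{n}}(y_{n}))-y_{n}\to 0$, contradicting the initial assumption. The only mildly delicate step is ensuring the uniformity in $y\in M$, but this is handled by the splitting argument above; the rest is bookkeeping.
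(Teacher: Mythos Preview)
Your proof is correct and follows essentially the same approach as the paper, which does not give its own argument but simply refers to Lemma~4.3 in \cite{AD}; the computation there is precisely the contradiction argument with the change of variable $z=(\e x-y)/\e$ and dominated convergence that you carry out. One minor remark: your final paragraph about establishing uniformity via a splitting over $B_R$ and $\R^3\setminus B_R$ is unnecessary, since the contradiction set-up (extracting a sequence $y_n\in M$ with $y_n\to y_0$) already takes care of the uniformity in $y\in M$.
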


\noindent
At this point, we introduce a subset $\widetilde{\N}_{\e}$ of $\N_{\e}$ by taking a function $h_{1}:\R^{+}\rightarrow \R^{+}$ such that $h_{1}(\e)\rightarrow 0$ as $\e \rightarrow 0$, and setting
$$
\widetilde{\N}_{\e}=\left \{u\in \N_{\e}: J_{\e}(u)\leq c_{V_{0}}+h_{1}(\e)\right\}.
$$
Fixed $y\in M$, from Lemma \ref{lem3.4} follows that $h_{1}(\e)=|J_{\e}(\Phi_{\e}(y))-c_{V_{0}}|\rightarrow 0$ as $\e \rightarrow 0$. Therefore $\Phi_{\e}(y)\in \widetilde{\N}_{\e}$, and $\widetilde{\N}_{\e}\neq \emptyset$ for any $\e>0$. Moreover, proceeding as in Lemma $4.5$ in \cite{AD}, we have:
\begin{lem}\label{lem3.5}
$$
\lim_{\e \rightarrow 0} \sup_{u\in \widetilde{\mathcal{N}}_{\e}} {\rm dist}(\beta_{\e}(u), M_{\delta})=0.
$$
\end{lem}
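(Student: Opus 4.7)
The plan is to argue by contradiction. Suppose the conclusion fails: then there exist $\delta_{0}>0$, sequences $\e_{n}\to 0$ and $u_{n}\in \widetilde{\mathcal{N}}_{\e_{n}}$ with
\[
\operatorname{dist}(\beta_{\e_{n}}(u_{n}), M_{\delta})\geq \delta_{0} \qquad \text{for all } n.
\]
First I would show $J_{\e_{n}}(u_{n})\to c_{V_{0}}$. Since $u_{n}\in \mathcal{N}_{\e_{n}}$ we have $c_{\e_{n}}\leq J_{\e_{n}}(u_{n})\leq c_{V_{0}}+h_{1}(\e_{n})$, so this limit follows from Lemma \ref{AMlem1} together with $h_{1}(\e_{n})\to 0$.

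Next, I would invoke Lemma \ref{prop3.3}: there is a sequence $(\tilde{y}_{n})\subset \R^{3}$ such that $v_{n}(x):=|u_{n}|(x+\tilde{y}_{n})$ converges strongly in $H^{s}(\R^{3},\R)$ to some $v\not\equiv 0$, and $y_{n}:=\e_{n}\tilde{y}_{n}\to y_{0}\in M$ (up to a subsequence). Since $s\in(3/4,1)$ one has $4\in [2,2^{*}_{s})$, hence $v_{n}\to v$ strongly in $L^{4}(\R^{3},\R)$ and in particular $\int_{\R^{3}}|v|^{4}\,dx>0$.

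The key computation is the change of variable $x=z+\tilde{y}_{n}$ in the barycenter:
\[
\beta_{\e_{n}}(u_{n})=\frac{\displaystyle\int_{\R^{3}} \varUpsilon(\e_{n}z+y_{n})|v_{n}(z)|^{4}\,dz}{\displaystyle\int_{\R^{3}}|v_{n}(z)|^{4}\,dz}.
\]
For each fixed $z$, $\e_{n}z+y_{n}\to y_{0}$, so (since $\varUpsilon$ is continuous at $y_{0}$ and $y_{0}\in M\subset M_{\delta}\subset B_{\rho}$) we have $\varUpsilon(\e_{n}z+y_{n})\to \varUpsilon(y_{0})=y_{0}$. The function $\varUpsilon$ is globally bounded by $\rho$, and $|v_{n}|^{4}\to |v|^{4}$ in $L^{1}(\R^{3},\R)$; a standard combination (e.g.\ write $\varUpsilon(\e_{n}z+y_{n})|v_{n}|^{4}-y_{0}|v|^{4}=\varUpsilon(\e_{n}z+y_{n})(|v_{n}|^{4}-|v|^{4})+(\varUpsilon(\e_{n}z+y_{n})-y_{0})|v|^{4}$ and use boundedness plus dominated convergence on the second piece) gives
\[
\int_{\R^{3}} \varUpsilon(\e_{n}z+y_{n})|v_{n}|^{4}\,dz\;\longrightarrow\;y_{0}\int_{\R^{3}}|v|^{4}\,dz.
\]
Dividing by $\int |v_{n}|^{4}\,dz\to \int |v|^{4}\,dz\neq 0$ yields $\beta_{\e_{n}}(u_{n})\to y_{0}\in M\subset M_{\delta}$, so $\operatorname{dist}(\beta_{\e_{n}}(u_{n}),M_{\delta})\to 0$, contradicting the assumption.

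The only delicate point in this plan is justifying that $J_{\e_{n}}(u_{n})\to c_{V_{0}}$ (which triggers Lemma \ref{prop3.3}); everything afterwards is the standard barycenter-and-translation argument. The boundedness of $\varUpsilon$ and the $L^{4}$-convergence of $|v_{n}|$ (granted by $s>3/4$) make the passage to the limit in the barycenter integral routine, and the fact that $v\not\equiv 0$ from Lemma \ref{prop3.3} prevents division by zero.
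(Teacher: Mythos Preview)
Your argument is the standard one and matches the proof the paper defers to \cite{AD}. One small imprecision: the sandwich $c_{\e_{n}}\leq J_{\e_{n}}(u_{n})\leq c_{V_{0}}+h_{1}(\e_{n})$ needs $\liminf_{n}c_{\e_{n}}\geq c_{V_{0}}$, whereas Lemma~\ref{AMlem1} only gives $\limsup_{\e\to0} c_{\e}\leq c_{V_{0}}$. The missing inequality $c_{V_{0}}\leq c_{\e}$ follows from Lemma~\ref{DI}, $V_{\e}\geq V_{0}$ and $(g_{2})$ (so that $J_{\e}(tu)\geq J_{V_{0}}(t|u|)$ for every $t\geq 0$), or alternatively one can observe that the proof of Lemma~\ref{prop3.3} itself establishes $c_{V_{0}}\leq J_{\e_{n}}(u_{n})$ and only uses the upper bound $J_{\e_{n}}(u_{n})\leq c_{V_{0}}+o_{n}(1)$ in the preliminary steps. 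With this adjustment your proof is complete.
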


We conclude this section giving the proof of our multiplicity result for \eqref{MPe}.
\begin{thm}\label{multiple}
For any $\delta>0$ such that $M_{\delta}\subset \Lambda$, there exists $\tilde{\e}_{\delta}>0$ such that, for any $\e\in (0, \e_{\delta})$, problem \eqref{MPe} has at least $cat_{M_{\delta}}(M)$ nontrivial solutions.
\end{thm}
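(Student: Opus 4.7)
The plan is to apply Ljusternik--Schnirelmann category theory to the functional $J_{\e}$ restricted to the Nehari manifold $\N_{\e}$, using the maps $\Phi_{\e}\colon M\to\N_{\e}$ and $\beta_{\e}\colon\N_{\e}\to\R^{3}$ constructed above, together with the compactness result in Proposition \ref{propPSc}.

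First, I would fix $\delta>0$ with $M_{\delta}\subset\Lambda$ and invoke Lemma \ref{AMlem1} together with Lemma \ref{lem3.4} to choose $\tilde\e_{\delta}>0$ small enough that, for every $\e\in(0,\tilde\e_{\delta})$, we have simultaneously $c_{\e}+h_{1}(\e)<c_{\ast}=\tfrac{s}{3}S_{\ast}^{3/(2s)}$ and $\Phi_{\e}(M)\subset\widetilde{\N}_{\e}$. Consequently, by Proposition \ref{propPSc}, $J_{\e}$ constrained to $\N_{\e}$ satisfies $(PS)_{c}$ for every $c\leq c_{V_{0}}+h_{1}(\e)$. I would also shrink $\tilde\e_{\delta}$ (using Lemma \ref{lem3.5}) so that $\beta_{\e}(\widetilde{\N}_{\e})\subset M_{\delta}$, so $\beta_{\e}$ genuinely lands in $M_{\delta}$.

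The key topological step is to show that the composition $\beta_{\e}\circ\Phi_{\e}\colon M\to M_{\delta}$ is homotopic to the natural inclusion $\iota\colon M\hookrightarrow M_{\delta}$. For this, I would use the straight-line homotopy $H\colon[0,1]\times M\to M_{\delta}$ defined by
\[
H(\tau,y)=\varUpsilon\bigl((1-\tau)y+\tau\beta_{\e}(\Phi_{\e}(y))\bigr),
\]
which is well defined because $M_{\delta}\subset B_{\rho}$ and $\varUpsilon$ retracts $\R^{3}$ onto $\overline{B_{\rho}}\supset M_{\delta}$. Lemma \ref{lem3.5N} ensures that, for $\e$ small uniformly in $y\in M$, the segment joining $y$ and $\beta_{\e}(\Phi_{\e}(y))$ stays inside $B_{\rho}$, so $\varUpsilon$ acts as the identity and the endpoints of $H$ are exactly $\iota$ and $\beta_{\e}\circ\Phi_{\e}$. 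This is the classical Benci--Cerami argument and is where the careful choice of $\rho$ enters.

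A standard property of the Ljusternik--Schnirelmann category then yields
\[
\mathrm{cat}_{\widetilde{\N}_{\e}}(\widetilde{\N}_{\e})\geq\mathrm{cat}_{M_{\delta}}(M),
\]
since $M\xrightarrow{\Phi_{\e}}\widetilde{\N}_{\e}\xrightarrow{\beta_{\e}} M_{\delta}$ factors the inclusion up to homotopy. Combined with the Palais--Smale condition at the relevant levels (Proposition \ref{propPSc}) and the fact that $\N_{\e}$ is a $C^{1}$ natural constraint, the abstract Ljusternik--Schnirelmann theorem produces at least $\mathrm{cat}_{M_{\delta}}(M)$ critical points of $J_{\e}$ restricted to $\N_{\e}$ with energy in $[c_{\e},c_{V_{0}}+h_{1}(\e)]$. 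By Corollary \ref{cor}, each such point is a nontrivial critical point of the unconstrained functional $J_{\e}$, hence a nontrivial weak solution of \eqref{MPe}.

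The main obstacle, and the part requiring the most care, is verifying the hypotheses of the abstract category theorem: in particular, that $\widetilde{\N}_{\e}$ is a complete metric space, that the deformation lemma can be applied (which rests on Proposition \ref{propPSc} and the constraint being $C^{1}$ regular, as witnessed by the estimate $\langle T_{\e}'(u),u\rangle<0$ on $\N_{\e}$ from the proof of Proposition \ref{propPSc}), and that the sublevel $\widetilde{\N}_{\e}$ is bounded away from $0$ so the topological constructions are meaningful --- this is guaranteed by \eqref{uNr}. Once these pieces are in place, the conclusion follows routinely from the Benci--Cerami scheme.
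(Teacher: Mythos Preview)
Your proposal follows essentially the same Benci--Cerami scheme as the paper's proof, which is in fact even terser (it simply cites \cite{CL} for the homotopy step and then invokes Proposition \ref{propPSc}, Ljusternik--Schnirelmann theory, and Corollary \ref{cor}). One small imprecision: in your homotopy argument you only verify that the segment $(1-\tau)y+\tau\beta_{\e}(\Phi_{\e}(y))$ stays in $B_{\rho}$, whereas what you need is that it stays in $M_{\delta}$; this follows directly from Lemma \ref{lem3.5N}, since for $\e$ small $|\beta_{\e}(\Phi_{\e}(y))-y|<\delta$ and hence every point of the segment lies within distance $\delta$ of $y\in M$, so the wrapping by $\varUpsilon$ is actually unnecessary.
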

\begin{proof}
Given  $\delta>0$ such that $M_{\delta}\subset \Lambda$, we can use Lemma \ref{lem3.5N}, Lemma \ref{lem3.4}, Lemma \ref{lem3.5} and argue as in \cite{CL} to deduce the existence of $\tilde{\e}_{\delta}>0$ such that, for any $\e\in (0, \e_{\delta})$, the following diagram
$$
M \stackrel{\Phi_{\e}}\rightarrow \widetilde{\mathcal{N}}_{\e} \stackrel{\beta_{\e}}\rightarrow M_{\delta}
$$
is well defined and $\beta_{\e}\circ \Phi_{\e}$ is homotopically equivalent to the embedding  $\iota: M\rightarrow M_{\delta}$. Thus $cat_{\widetilde{\mathcal{N}}_{\e}}(\widetilde{\mathcal{N}}_{\e})\geq cat_{M_{\delta}}(M)$.
It follows from Proposition \ref{propPSc} and standard Ljusternik-Schnirelmann theory that $J_{\e}$  possesses at least $cat_{\widetilde{\mathcal{N}}_{\e}}(\widetilde{\mathcal{N}}_{\e})$  critical points on $\mathcal{N}_{\e}$. Using Corollary \ref{cor}  we can obtain $cat_{M_{\delta}}(M)$  nontrivial solutions for  \eqref{MPe}.
\end{proof}

\section{Proof of Theorem \ref{thm1}}\label{sec5}
In this last section we provide the proof of our main result.
Firstly, we develop a Moser iteration scheme \cite{Moser} which will be the main key to deduce that the solutions to \eqref{Pe} are indeed solutions  to \eqref{P}.
\begin{lem}\label{moser} 
Let $\e_{n}\rightarrow 0$ and $u_{n}\in \widetilde{\mathcal{N}}_{\e_{n}}$ be a solution to \eqref{MPe}. 
Then $v_{n}=|u_{n}|(\cdot+\tilde{y}_{n})$ satisfies $v_{n}\in L^{\infty}(\R^{3},\R)$ and there exists $C>0$ such that 
$$
\|v_{n}\|_{L^{\infty}(\R^{3})}\leq C \mbox{ for all } n\in \mathbb{N},
$$
where $\tilde{y}_{n}$ is given by Lemma \ref{prop3.3}.
Moreover
$$
\lim_{|x|\rightarrow \infty} v_{n}(x)=0 \mbox{ uniformly in } n\in \mathbb{N}.
$$
\end{lem}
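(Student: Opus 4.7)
The plan is to carry out the three-step program outlined in the introduction of the paper, applied to the translated sequence $v_n = |u_n|(\cdot + \tilde y_n)$. First I would establish a uniform $L^\infty$ bound via Moser iteration directly on $|u_n|$ (equivalently on $v_n$, since the estimates are translation-invariant). Second, I would prove the distributional subsolution inequality
\begin{equation*}
(-\Delta)^s |u_n| + V_0 |u_n| \leq g_{\e_n}(x,|u_n|^2)\,|u_n| \quad \text{in } \R^3,
\end{equation*}
by testing \eqref{MPe} against $\varphi\, u_n / u_{\delta,n}$ with $u_{\delta,n}=\sqrt{|u_n|^2+\delta^2}$, $\varphi\in C^\infty_c(\R^3,\R)$ nonnegative, and letting $\delta\to 0^+$. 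Third, I would compare $v_n$ against the explicit super-solution constructed in \cite{FQT} to deduce the uniform vanishing at infinity.

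For the Moser step, I would use test functions of the form $\varphi = |u_n|^{2(\beta-1)}u_n \eta_L^2$ where $\eta_L$ is a suitable truncation (to guarantee $L^2$-membership at each stage) and $\beta\geq 1$. The fractional magnetic diamagnetic inequality (Lemma \ref{DI}) transforms the magnetic Gagliardo seminorm into a bound on $[|u_n|^\beta]^2$; together with the Sobolev embedding $H^s(\R^3,\R)\hookrightarrow L^{2^*_s}(\R^3,\R)$ and the growth controls from $(g_1)$--$(g_2)$ (using the $L^\infty$-smallness of the nonlinear coefficient on the sublevel set $\{|u_n|\geq K\}$ for $K$ large, as in Proposition \ref{propPSc}), this yields the standard iteration recursion $\|v_n\|_{L^{\beta 2^*_s}}\leq C^{1/\beta}\|v_n\|_{L^{2\beta}}$ with constants independent of $n$. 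Iterating gives the uniform $L^\infty$-bound. The Poisson term $\phi_{|u_n|}^t |u_n|^2$ is harmless here since it is nonnegative and may be dropped from the left-hand side; alternatively it is dominated using Lemma \ref{poisson}(4) and the uniform $H^s_{\e_n}$-bound.

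For the Kato-type inequality, the key algebraic identity is
\begin{equation*}
\Re\Bigl[(u_n(x)-e^{\imath(x-y)\cdot A_{\e_n}(\frac{x+y}{2})}u_n(y))\,\overline{\Bigl(\tfrac{u_n(x)}{u_{\delta,n}(x)}\varphi(x)-e^{\imath(x-y)\cdot A_{\e_n}(\frac{x+y}{2})}\tfrac{u_n(y)}{u_{\delta,n}(y)}\varphi(y)\Bigr)}\Bigr] \geq \tfrac{|u_n(x)||u_n(y)|}{u_{\delta,n}(x)u_{\delta,n}(y)}\bigl(|u_n(x)|-|u_n(y)|\bigr)(\varphi(x)-\varphi(y)),
\end{equation*}
obtainable by expanding, grouping the diamagnetic-type cross terms and exploiting $|\tfrac{u_n}{u_{\delta,n}}|\leq 1$. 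Dividing by $|x-y|^{3+2s}$ and integrating, the left-hand side equals $\Re\langle J'_{\e_n}(u_n), u_n\varphi/u_{\delta,n}\rangle$ minus the zeroth-order, Poisson and nonlinear contributions; sending $\delta\to 0^+$ by dominated convergence (the ratio $|u_n|/u_{\delta,n}\to\chi_{\{u_n\neq 0\}}$ pointwise, bounded by $1$, and $|u_n(x)||u_n(y)|/(u_{\delta,n}(x)u_{\delta,n}(y))\to \chi_{\{u_n\neq 0\}}$) yields the desired weak inequality, using $V_{\e_n}\geq V_0$ on the left and $g_{\e_n}(x,|u_n|^2)|u_n|\varphi\geq 0$ plus nonnegativity of the Poisson term on the right. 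This step, which replaces a pointwise fractional Kato inequality that we cannot prove, is where I expect the main technical obstacle.

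Finally, with the subsolution inequality in hand, the translation gives
\begin{equation*}
(-\Delta)^s v_n + V_0 v_n \leq g_{\e_n}(\e_n x + \e_n \tilde y_n, v_n^2)\,v_n \quad \text{in } \R^3.
\end{equation*}
By Lemma \ref{prop3.3}, $v_n\to v$ in $H^s(\R^3,\R)$, hence by the uniform $L^\infty$ bound and interpolation in $L^q$ one obtains $\|v_n\|_{L^q(B_R^c)}\to 0$ as $R\to\infty$ uniformly in $n$; combined with the growth $(g_1)$, the right-hand side can be written as $h_n(x)v_n$ with $h_n\in L^\infty(\R^3)$ satisfying $h_n(x)\to 0$ as $|x|\to\infty$ uniformly in $n$. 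A standard comparison with the barrier function $\tilde w(x)=C_0/|x|^{3+2s}$ constructed in Lemma 4.3 of \cite{FQT} (as recalled in the proof of Lemma \ref{AMlem1}), together with the maximum principle for $(-\Delta)^s + V_0/2$ outside a large ball, delivers $v_n(x)\leq C/|x|^{3+2s}$ for $|x|>R_0$ with $R_0,C$ independent of $n$, hence $v_n(x)\to 0$ as $|x|\to\infty$ uniformly in $n$.
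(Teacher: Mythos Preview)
Your overall strategy---Moser iteration for the $L^\infty$ bound, testing with $\varphi\, u_n/u_{\delta,n}$ to derive a Kato-type subsolution inequality, then a comparison argument---is exactly the paper's approach. However, two technical steps need correction.

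First, your ``key algebraic identity'' is false as stated. Take $A\equiv 0$, $u_n$ real nonnegative with $u_n(x)=2$, $u_n(y)=1$, $\varphi(x)=0$, $\varphi(y)=1$: the left-hand side equals $-1/\sqrt{1+\delta^2}$ while your proposed lower bound equals $-2/(\sqrt{4+\delta^2}\sqrt{1+\delta^2})$, and the inequality $\sqrt{4+\delta^2}\leq 2$ fails for every $\delta>0$. The paper obtains instead the \emph{asymmetric} lower bound
\[
\frac{|u_n(x)|}{u_{\delta,n}(x)}\bigl(|u_n(x)|-|u_n(y)|\bigr)\bigl(\varphi(x)-\varphi(y)\bigr),
\]
by telescoping and then discarding the term $\bigl(\tfrac{|u_n(x)|}{u_{\delta,n}(x)}-\tfrac{|u_n(y)|}{u_{\delta,n}(y)}\bigr)\bigl(|u_n(x)|-|u_n(y)|\bigr)\varphi(y)$, which is nonnegative because $t\mapsto t/\sqrt{t^2+\delta^2}$ is increasing and $\varphi\geq 0$. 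That monotonicity observation is the missing ingredient.

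Second, your comparison step is circular. You assert that $h_n(x)=g(\e_n x+\e_n\tilde y_n,v_n^2)\to 0$ as $|x|\to\infty$ uniformly in $n$, but $(g_1)$ delivers this only where $v_n(x)$ is \emph{pointwise} small---which is precisely what you are trying to prove; uniform smallness of $L^q$ tails does not give pointwise decay. The paper instead lets $z_n$ solve $(-\Delta)^s z_n+V_0 z_n=g_n$ in $\R^3$ with $g_n:=g(\e_n x+\e_n\tilde y_n,v_n^2)v_n$, writes $z_n=\mathcal{K}*g_n$ via the Bessel kernel, and uses that $g_n$ is uniformly bounded in $L^\infty$ and convergent in every $L^r$, $r\in(2,\infty)$ (from the $L^\infty$ bound plus interpolation), together with the decay properties of $\mathcal{K}$ from \cite{FQT,AM}, to conclude $z_n(x)\to 0$ at infinity uniformly in $n$. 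Comparison then yields $0\leq v_n\leq z_n$. The barrier $\tilde w(x)=C_0/|x|^{3+2s}$ you invoke is used only later, in the proof of Theorem~\ref{thm1}, for the polynomial decay estimate \emph{after} uniform vanishing has already been established.
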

\begin{proof}
For any $L>0$ we define $u_{L,n}:=\min\{|u_{n}|, L\}\geq 0$ and we set $v_{L, n}=u_{L,n}^{2(\beta-1)}u_{n}$ where $\beta>1$ will be chosen after \eqref{simo1}.
Taking $v_{L, n}$ as a test function in (\ref{MPe}) we can see that
\begin{align}\label{conto1FF}
&\Re\left(\iint_{\R^{6}} \frac{(u_{n}(x)-u_{n}(y)e^{\imath A(\frac{x+y}{2})\cdot (x-y)})}{|x-y|^{3+2s}} \overline{(u_{n}u_{L,n}^{2(\beta-1)}(x)-u_{n}u_{L,n}^{2(\beta-1)}(y)e^{\imath A(\frac{x+y}{2})\cdot (x-y)})} \, dx dy\right)   \nonumber \\
&=-\int_{\R^{3}}\phi_{|u_{n}|}^{t}|u_{n}|^{2}u_{L,n}^{2(\beta-1)}dx+\int_{\R^{3}} g_{\e_{n}}(x, |u_{n}|^{2}) |u_{n}|^{2}u_{L,n}^{2(\beta-1)}  \,dx-\int_{\R^{3}} V_{\e_{n}}(x) |u_{n}|^{2} u_{L,n}^{2(\beta-1)} \, dx.
\end{align}
Let us note that
\begin{align*}
&\Re\left[(u_{n}(x)-u_{n}(y)e^{\imath A(\frac{x+y}{2})\cdot (x-y)})\overline{(u_{n}u_{L,n}^{2(\beta-1)}(x)-u_{n}u_{L,n}^{2(\beta-1)}(y)e^{\imath A(\frac{x+y}{2})\cdot (x-y)})}\right] \\
&=\Re\Bigl[|u_{n}(x)|^{2}v_{L}^{2(\beta-1)}(x)-u_{n}(x)\overline{u_{n}(y)} u_{L,n}^{2(\beta-1)}(y)e^{-\imath A(\frac{x+y}{2})\cdot (x-y)}-u_{n}(y)\overline{u_{n}(x)} u_{L,n}^{2(\beta-1)}(x) e^{\imath A(\frac{x+y}{2})\cdot (x-y)} \\
&+|u_{n}(y)|^{2}u_{L,n}^{2(\beta-1)}(y) \Bigr] \\
&\geq (|u_{n}(x)|^{2}u_{L,n}^{2(\beta-1)}(x)-|u_{n}(x)||u_{n}(y)|u_{L,n}^{2(\beta-1)}(y)-|u_{n}(y)||u_{n}(x)|u_{L,n}^{2(\beta-1)}(x)+|u_{n}(y)|^{2}u^{2(\beta-1)}_{L,n}(y) \\
&=(|u_{n}(x)|-|u_{n}(y)|)(|u_{n}(x)|u_{L,n}^{2(\beta-1)}(x)-|u_{n}(y)|u_{L,n}^{2(\beta-1)}(y)),
\end{align*}
so we have
\begin{align}\label{realeF}
&\Re\left(\iint_{\R^{6}} \frac{(u_{n}(x)-u_{n}(y)e^{\imath A(\frac{x+y}{2})\cdot (x-y)})}{|x-y|^{3+2s}} \overline{(u_{n}u_{L,n}^{2(\beta-1)}(x)-u_{n}u_{L,n}^{2(\beta-1)}(y)e^{\imath A(\frac{x+y}{2})\cdot (x-y)})} \, dx dy\right) \nonumber\\
&\geq \iint_{\R^{6}} \frac{(|u_{n}(x)|-|u_{n}(y)|)}{|x-y|^{3+2s}} (|u_{n}(x)|u_{L,n}^{2(\beta-1)}(x)-|u_{n}(y)|u_{L,n}^{2(\beta-1)}(y))\, dx dy.
\end{align}
For all $t\geq 0$, let us define
\begin{equation*}
\gamma(t)=\gamma_{L, \beta}(t)=t t_{L}^{2(\beta-1)}
\end{equation*}
where  $t_{L}=\min\{t, L\}$. 
Since $\gamma$ is an increasing function, we have
\begin{align*}
(a-b)(\gamma(a)- \gamma(b))\geq 0 \quad \mbox{ for any } a, b\in \R.
\end{align*}
Let us define the functions 
\begin{equation*}
\Lambda(t)=\frac{|t|^{2}}{2} \quad \mbox{ and } \quad \Gamma(t)=\int_{0}^{t} (\gamma'(\tau))^{\frac{1}{2}} d\tau. 
\end{equation*}
and we note that
\begin{equation}\label{Gg}
\Lambda'(a-b)(\gamma(a)-\gamma(b))\geq |\Gamma(a)-\Gamma(b)|^{2} \mbox{ for any } a, b\in\R. 
\end{equation}
Indeed, for any $a, b\in \R$ such that $a<b$, the Jensen inequality yields
\begin{align*}
\Lambda'(a-b)(\gamma(a)-\gamma(b))&=(a-b)\int_{b}^{a} \gamma'(t)dt\\
&=(a-b)\int_{b}^{a} (\Gamma'(t))^{2}dt \\
&\geq \left(\int_{b}^{a} \Gamma'(t) dt\right)^{2}\\
&=(\Gamma(a)-\Gamma(b))^{2}.
\end{align*}
In similar fashion we can prove that if $a\geq b$  then $\Lambda'(a-b)(\gamma(a)-\gamma(b))\geq (\Gamma(b)-\Gamma(a))^{2}$ that is \eqref{Gg} holds.
Then, in view of \eqref{Gg}, we can see that
\begin{align}\label{Gg1}
|\Gamma(|u_{n}(x)|)- \Gamma(|u_{n}(y)|)|^{2} \leq (|u_{n}(x)|- |u_{n}(y)|)((|u_{n}|u_{L,n}^{2(\beta-1)})(x)- (|u_{n}|u_{L,n}^{2(\beta-1)})(y)). 
\end{align}
Taking into account \eqref{realeF} and \eqref{Gg1}, we obtain
\begin{align}\label{conto1FFF}
\Re\left(\iint_{\R^{6}} \frac{(u_{n}(x)-u_{n}(y)e^{\imath A(\frac{x+y}{2})\cdot (x-y)})}{|x-y|^{3+2s}} \overline{(u_{n}u_{L,n}^{2(\beta-1)}(x)-u_{n}u_{L,n}^{2(\beta-1)}(y)e^{\imath A(\frac{x+y}{2})\cdot (x-y)})} \, dx dy\right) 
\geq [\Gamma(|u_{n}|)]^{2}.
\end{align}
Since $\Gamma(|u_{n}|)\geq \frac{1}{\beta} |u_{n}| u_{L,n}^{\beta-1}$ and using the fractional Sobolev embedding $\mathcal{D}^{s,2}(\R^{3}, \R)\subset L^{\2}(\R^{3}, \R)$ (see \cite{DPV}), we deduce that 
\begin{equation}\label{SS1}
[\Gamma(|u_{n}|)]^{2}\geq S_{*} \|\Gamma(|u_{n}|)\|^{2}_{L^{\2}(\R^{3})}\geq \left(\frac{1}{\beta}\right)^{2} S_{*}\||u_{n}| u_{L,n}^{\beta-1}\|^{2}_{L^{\2}(\R^{3})}.
\end{equation}
Putting together \eqref{conto1FF}, \eqref{conto1FFF}, \eqref{SS1} and using $(4)$ of Lemma \ref{poisson}, we can infer that
\begin{align}\label{BMS}
\left(\frac{1}{\beta}\right)^{2} S_{*}\||u_{n}| u_{L,n}^{\beta-1}\|^{2}_{L^{\2}(\R^{3})}+\int_{\R^{3}} V_{\e_{n}}(x)|u_{n}|^{2}u_{L,n}^{2(\beta-1)} dx\leq \int_{\R^{3}} g_{\e_{n}}(x, |u_{n}|^{2}) |u_{n}|^{2} u_{L,n}^{2(\beta-1)} dx.
\end{align}
On the other hand, from assumptions $(g_1)$ and $(g_2)$, for any $\xi>0$ there exists $C_{\xi}>0$ such that
\begin{equation}\label{SS2}
g_{\e_{n}}(x, t^{2})t^{2}\leq \xi |t|^{2}+C_{\xi}|t|^{\2} \mbox{ for all } t\in \R.
\end{equation}
Taking $\xi\in (0, V_{0})$ and using \eqref{BMS} and \eqref{SS2} we can see that
\begin{equation}\label{simo1}
\|w_{L,n}\|_{L^{\2}(\R^{3})}^{2}\leq C \beta^{2} \int_{\R^{3}} |u_{n}|^{\2}u_{L,n}^{2(\beta-1)},
\end{equation}
where $w_{L,n}:=|u_{n}| u_{L,n}^{\beta-1}$.\\
Now, we take $\beta=\frac{\2}{2}$ and fix $R>0$. Recalling that $0\leq u_{L,n}\leq |u_{n}|$ and applying H\"older inequality we have
\begin{align}\label{simo2}
\int_{\R^{3}} |u_{n}|^{\2}u_{L,n}^{2(\beta-1)}dx&=\int_{\R^{3}} |u_{n}|^{\2-2} |u_{n}|^{2} u_{L,n}^{\2-2}dx \nonumber\\
&=\int_{\R^{3}} |u_{n}|^{\2-2} (|u_{n}| u_{L,n}^{\frac{\2-2}{2}})^{2}dx \nonumber\\
&\leq \int_{\{|u_{n}|<R\}} R^{\2-2} |u_{n}|^{\2} dx+\int_{\{|u_{n}|>R\}} |u_{n}|^{\2-2} (|u_{n}| u_{L,n}^{\frac{\2-2}{2}})^{2}dx \nonumber\\
&\leq \int_{\{|u_{n}|<R\}} R^{\2-2} |u_{n}|^{\2} dx+\left(\int_{\{|u_{n}|>R\}} |u_{n}|^{\2} dx\right)^{\frac{\2-2}{\2}} \left(\int_{\R^{3}} (|u_{n}| u_{L,n}^{\frac{\2-2}{2}})^{\2}dx\right)^{\frac{2}{\2}}.
\end{align}
Since $(|u_{n}|)$ is bounded in $H^{s}(\R^{3}, \R)$, we can see that for any $R$ sufficiently large
\begin{equation}\label{simo3}
\left(\int_{\{|u_{n}|>R\}} |u_{n}|^{\2} dx\right)^{\frac{\2-2}{\2}}\leq  \frac{1}{2\beta^{2}}.
\end{equation}
Putting together \eqref{simo1}, \eqref{simo2} and \eqref{simo3} we get
\begin{equation*}
\left(\int_{\R^{3}} (|u_{n}| u_{L,n}^{\frac{\2-2}{2}})^{\2} \right)^{\frac{2}{\2}}\leq C\beta^{2} \int_{\R^{3}} R^{\2-2} |u_{n}|^{\2} dx<\infty
\end{equation*}
and taking the limit as $L\rightarrow \infty$ we obtain $|u_{n}|\in L^{\frac{(\2)^{2}}{2}}(\R^{3},\R)$.

Now, using $0\leq u_{L,n}\leq |u_{n}|$ and passing to the limit as $L\rightarrow \infty$ in \eqref{simo1} we have
\begin{equation*}
\|u_{n}\|_{L^{\beta\2}(\R^{3})}^{2\beta}\leq C \beta^{2} \int_{\R^{3}} |u_{n}|^{\2+2(\beta-1)},
\end{equation*}
from which we deduce that
\begin{equation*}
\left(\int_{\R^{3}} |u_{n}|^{\beta\2} dx\right)^{\frac{1}{(\beta-1)\2}}\leq C \beta^{\frac{1}{\beta-1}} \left(\int_{\R^{3}} |u_{n}|^{\2+2(\beta-1)}\right)^{\frac{1}{2(\beta-1)}}.
\end{equation*}
For $m\geq 1$ we define $\beta_{m+1}$ inductively so that $\2+2(\beta_{m+1}-1)=\2 \beta_{m}$ and $\beta_{1}=\frac{\2}{2}$. Then we have
\begin{equation*}
\left(\int_{\R^{3}} |u_{n}|^{\beta_{m+1}\2} dx\right)^{\frac{1}{(\beta_{m+1}-1)\2}}\leq C \beta_{m+1}^{\frac{1}{\beta_{m+1}-1}} \left(\int_{\R^{3}} |u_{n}|^{\2\beta_{m}}\right)^{\frac{1}{\2(\beta_{m}-1)}}.
\end{equation*}
Let us define
$$
D_{m}=\left(\int_{\R^{3}} |u_{n}|^{\2\beta_{m}}\right)^{\frac{1}{\2(\beta_{m}-1)}}.
$$
Using an iteration argument, we can find $C_{0}>0$ independent of $m$ such that 
$$
D_{m+1}\leq \prod_{k=1}^{m} C \beta_{k+1}^{\frac{1}{\beta_{k+1}-1}}  D_{1}\leq C_{0} D_{1}.
$$
Taking the limit as $m\rightarrow \infty$ we get 
\begin{equation}\label{UBu}
\|u_{n}\|_{L^{\infty}(\R^{3})}\leq C_{0}D_{1}=:K \mbox{ for all } n\in \mathbb{N}.
\end{equation}
Moreover, by interpolation, $(|u_{n}|)$ strongly converges in $L^{r}(\R^{3}, \R)$ for all $r\in (2, \infty)$, and in view of the growth assumptions on $g$, also $g(\e x, |u_{n}|^{2})|u_{n}|$ strongly converges  in the same Lebesgue spaces. \\
Now, we aim to prove that $|u_{n}|$ is a weak subsolution to 
\begin{equation}\label{Kato0}
\left\{
\begin{array}{ll}
(-\Delta)^{s}v+V_{0} v=g_{\e_{n}}(x, v^{2})v &\mbox{ in } \R^{3} \\
v\geq 0 \quad \mbox{ in } \R^{3}.
\end{array}
\right.
\end{equation}
In some sense, we are going to prove that a Kato's inequality holds for the modified problem \eqref{MPe}.
Fix $\varphi\in C^{\infty}_{c}(\R^{3}, \R)$ such that $\varphi\geq 0$, and we take $\psi_{\delta, n}=\frac{u_{n}}{u_{\delta, n}}\varphi$ as test function in \eqref{Pe}, where we set $u_{\delta,n}=\sqrt{|u_{n}|^{2}+\delta^{2}}$ for $\delta>0$. We note that $\psi_{\delta, n}\in H^{s}_{\e_{n}}$ for all $\delta>0$ and $n\in \mathbb{N}$. Indeed $\int_{\R^{3}} V_{\e_{n}}(x) |\psi_{\delta,n}|^{2} dx\leq \int_{\supp(\varphi)} V_{\e_{n}}(x)\varphi^{2} dx<\infty$. 
On the other hand, we can observe
\begin{align*}
\psi_{\delta,n}(x)-\psi_{\delta,n}(y)e^{\imath A_{\e}(\frac{x+y}{2})\cdot (x-y)}&=\left(\frac{u_{n}(x)}{u_{\delta,n}(x)}\right)\varphi(x)-\left(\frac{u_{n}(y)}{u_{\delta,n}(y)}\right)\varphi(y)e^{\imath A_{\e}(\frac{x+y}{2})\cdot (x-y)}\\
&=\left[\left(\frac{u_{n}(x)}{u_{\delta,n}(x)}\right)-\left(\frac{u_{n}(y)}{u_{\delta,n}(x)}\right)e^{\imath A_{\e}(\frac{x+y}{2})\cdot (x-y)}\right]\varphi(x) \\
&+\left[\varphi(x)-\varphi(y)\right] \left(\frac{u_{n}(y)}{u_{\delta,n}(x)}\right) e^{\imath A_{\e}(\frac{x+y}{2})\cdot (x-y)} \\
&+\left(\frac{u_{n}(y)}{u_{\delta,n}(x)}-\frac{u_{n}(y)}{u_{\delta,n}(y)}\right)\varphi(y) e^{\imath A_{\e}(\frac{x+y}{2})\cdot (x-y)}
\end{align*}
which implies that
\begin{align*}
&|\psi_{\delta,n}(x)-\psi_{\delta,n}(y)e^{\imath A_{\e}(\frac{x+y}{2})\cdot (x-y)}|^{2} \\
&\leq \frac{4}{\delta^{2}}|u_{n}(x)-u_{n}(y)e^{\imath A_{\e}(\frac{x+y}{2})\cdot (x-y)}|^{2}\|\varphi\|^{2}_{L^{\infty}(\R^{3})} +\frac{4}{\delta^{2}}|\varphi(x)-\varphi(y)|^{2} \||u_{n}|\|^{2}_{L^{\infty}(\R^{3})} \\
&+\frac{4}{\delta^{4}} \||u_{n}|\|^{2}_{L^{\infty}(\R^{3})} \|\varphi\|^{2}_{L^{\infty}(\R^{3})} |u_{\delta,n}(y)-u_{\delta,n}(x)|^{2} \\
&\leq \frac{4}{\delta^{2}}|u_{n}(x)-u_{n}(y)e^{\imath A_{\e}(\frac{x+y}{2})\cdot (x-y)}|^{2}\|\varphi\|^{2}_{L^{\infty}(\R^{3})} +\frac{4K^{2}}{\delta^{2}}|\varphi(x)-\varphi(y)|^{2} \\
&+\frac{4K^{2}}{\delta^{4}} \|\varphi\|^{2}_{L^{\infty}(\R^{3})} ||u_{n}(y)|-|u_{n}(x)||^{2} 
\end{align*}
where we used $|z+w+k|^{2}\leq 4(|z|^{2}+|w|^{2}+|k|^{2})$ for all $z,w,k\in \C$, $|e^{\imath t}|=1$ for all $t\in \R$, $u_{\delta,n}\geq \delta$, $|\frac{u_{n}}{u_{\delta,n}}|\leq 1$, \eqref{UBu} and $|\sqrt{|z|^{2}+\delta^{2}}-\sqrt{|w|^{2}+\delta^{2}}|\leq ||z|-|w||$ for all $z, w\in \C$.\\
Since $u_{n}\in H^{s}_{\e_{n}}$, $|u_{n}|\in H^{s}(\R^{3}, \R)$ (by Lemma \ref{DI}) and $\varphi\in C^{\infty}_{c}(\R^{3}, \R)$, we deduce that $\psi_{\delta,n}\in H^{s}_{\e_{n}}$.

Then we have
\begin{align}\label{Kato1}
&\Re\left[\iint_{\R^{6}} \frac{(u_{n}(x)-u_{n}(y)e^{\imath A_{\e}(\frac{x+y}{2})\cdot (x-y)})}{|x-y|^{3+2s}} \left(\frac{\overline{u_{n}(x)}}{u_{\delta,n}(x)}\varphi(x)-\frac{\overline{u_{n}(y)}}{u_{\delta,n}(y)}\varphi(y)e^{-\imath A_{\e}(\frac{x+y}{2})\cdot (x-y)}  \right) dx dy\right] \nonumber\\
&+\int_{\R^{3}} V_{\e_{n}}(x)\frac{|u_{n}|^{2}}{u_{\delta,n}}\varphi dx+\int_{\R^{3}} \phi_{|u_{n}|}^{t} \frac{|u_{n}|^{2}}{u_{\delta,n}}\varphi dx=\int_{\R^{3}} g_{\e_{n}}(x, |u_{n}|^{2})\frac{|u_{n}|^{2}}{u_{\delta,n}}\varphi dx.
\end{align}
Now, using $\Re(z)\leq |z|$ for all $z\in \C$ and  $|e^{\imath t}|=1$ for all $t\in \R$, we have
\begin{align}\label{alves1}
&\Re\left[(u_{n}(x)-u_{n}(y)e^{\imath A_{\e}(\frac{x+y}{2})\cdot (x-y)}) \left(\frac{\overline{u_{n}(x)}}{u_{\delta,n}(x)}\varphi(x)-\frac{\overline{u_{n}(y)}}{u_{\delta,n}(y)}\varphi(y)e^{-\imath A_{\e}(\frac{x+y}{2})\cdot (x-y)}  \right)\right] \nonumber\\
&=\Re\left[\frac{|u_{n}(x)|^{2}}{u_{\delta,n}(x)}\varphi(x)+\frac{|u_{n}(y)|^{2}}{u_{\delta,n}(y)}\varphi(y)-\frac{u_{n}(x)\overline{u_{n}(y)}}{u_{\delta,n}(y)}\varphi(y)e^{-\imath A_{\e}(\frac{x+y}{2})\cdot (x-y)} -\frac{u_{n}(y)\overline{u_{n}(x)}}{u_{\delta,n}(x)}\varphi(x)e^{\imath A_{\e}(\frac{x+y}{2})\cdot (x-y)}\right] \nonumber \\
&\geq \left[\frac{|u_{n}(x)|^{2}}{u_{\delta,n}(x)}\varphi(x)+\frac{|u_{n}(y)|^{2}}{u_{\delta,n}(y)}\varphi(y)-|u_{n}(x)|\frac{|u_{n}(y)|}{u_{\delta,n}(y)}\varphi(y)-|u_{n}(y)|\frac{|u_{n}(x)|}{u_{\delta,n}(x)}\varphi(x) \right].
\end{align}
Let us note that
\begin{align}\label{alves2}
&\frac{|u_{n}(x)|^{2}}{u_{\delta,n}(x)}\varphi(x)+\frac{|u_{n}(y)|^{2}}{u_{\delta,n}(y)}\varphi(y)-|u_{n}(x)|\frac{|u_{n}(y)|}{u_{\delta,n}(y)}\varphi(y)-|u_{n}(y)|\frac{|u_{n}(x)|}{u_{\delta,n}(x)}\varphi(x) \nonumber\\
&=  \frac{|u_{n}(x)|}{u_{\delta,n}(x)}(|u_{n}(x)|-|u_{n}(y)|)\varphi(x)-\frac{|u_{n}(y)|}{u_{\delta,n}(y)}(|u_{n}(x)|-|u_{n}(y)|)\varphi(y) \nonumber\\
&=\left[\frac{|u_{n}(x)|}{u_{\delta,n}(x)}(|u_{n}(x)|-|u_{n}(y)|)\varphi(x)-\frac{|u_{n}(x)|}{u_{\delta,n}(x)}(|u_{n}(x)|-|u_{n}(y)|)\varphi(y)\right] \nonumber\\
&+\left(\frac{|u_{n}(x)|}{u_{\delta,n}(x)}-\frac{|u_{n}(y)|}{u_{\delta,n}(y)} \right) (|u_{n}(x)|-|u_{n}(y)|)\varphi(y) \nonumber\\
&=\frac{|u_{n}(x)|}{u_{\delta,n}(x)}(|u_{n}(x)|-|u_{n}(y)|)(\varphi(x)-\varphi(y)) +\left(\frac{|u_{n}(x)|}{u_{\delta,n}(x)}-\frac{|u_{n}(y)|}{u_{\delta,n}(y)} \right) (|u_{n}(x)|-|u_{n}(y)|)\varphi(y) \nonumber\\
&\geq \frac{|u_{n}(x)|}{u_{\delta,n}(x)}(|u_{n}(x)|-|u_{n}(y)|)(\varphi(x)-\varphi(y)) 
\end{align}
where in the last inequality we used the fact that
$$
\left(\frac{|u_{n}(x)|}{u_{\delta,n}(x)}-\frac{|u_{n}(y)|}{u_{\delta,n}(y)} \right) (|u_{n}(x)|-|u_{n}(y)|)\varphi(y)\geq 0
$$
because
$$
h(t)=\frac{t}{\sqrt{t^{2}+\delta^{2}}} \mbox{ is increasing for } t\geq 0 \quad \mbox{ and } \quad \varphi\geq 0 \mbox{ in }\R^{3}.
$$
Since
$$
\frac{|\frac{|u_{n}(x)|}{u_{\delta,n}(x)}(|u_{n}(x)|-|u_{n}(y)|)(\varphi(x)-\varphi(y))|}{|x-y|^{3+2s}}\leq \frac{||u_{n}(x)|-|u_{n}(y)||}{|x-y|^{\frac{3+2s}{2}}} \frac{|\varphi(x)-\varphi(y)|}{|x-y|^{\frac{3+2s}{2}}}\in L^{1}(\R^{6}),
$$
and $\frac{|u_{n}(x)|}{u_{\delta,n}(x)}\rightarrow 1$ a.e. in $\R^{3}$ as $\delta\rightarrow 0$,
we can use \eqref{alves1}, \eqref{alves2} and the Dominated Convergence Theorem to deduce that
\begin{align}\label{Kato2}
&\limsup_{\delta\rightarrow 0} \Re\left[\iint_{\R^{6}} \frac{(u_{n}(x)-u_{n}(y)e^{\imath A_{\e}(\frac{x+y}{2})\cdot (x-y)})}{|x-y|^{3+2s}} \left(\frac{\overline{u_{n}(x)}}{u_{\delta,n}(x)}\varphi(x)-\frac{\overline{u_{n}(y)}}{u_{\delta,n}(y)}\varphi(y)e^{-\imath A_{\e}(\frac{x+y}{2})\cdot (x-y)}  \right) dx dy\right] \nonumber\\
&\geq \limsup_{\delta\rightarrow 0} \iint_{\R^{6}} \frac{|u_{n}(x)|}{u_{\delta,n}(x)}(|u_{n}(x)|-|u_{n}(y)|)(\varphi(x)-\varphi(y)) \frac{dx dy}{|x-y|^{3+2s}} \nonumber\\
&=\iint_{\R^{6}} \frac{(|u_{n}(x)|-|u_{n}(y)|)(\varphi(x)-\varphi(y))}{|x-y|^{3+2s}} dx dy.
\end{align}
On the other hand, from the Dominated Convergence Theorem again (we recall that $\frac{|u_{n}|^{2}}{u_{\delta, n}}\leq |u_{n}|$), Fatou's Lemma and $\varphi\in C^{\infty}_{c}(\R^{3}, \R)$ we can see that
\begin{equation}\label{Kato3}
\lim_{\delta\rightarrow 0} \int_{\R^{3}} V_{\e_{n}}(x)\frac{|u_{n}|^{2}}{u_{\delta,n}}\varphi dx=\int_{\R^{3}} V_{\e_{n}}(x)|u_{n}|\varphi dx\geq \int_{\R^{3}} V_{0}|u_{n}|\varphi dx
\end{equation}
\begin{equation}\label{KatoP}
\liminf_{\delta\rightarrow 0} \int_{\R^{3}} \phi_{|u_{n}|}^{t} \frac{|u_{n}|^{2}}{u_{\delta,n}}\varphi dx\geq \int_{\R^{3}} \phi_{|u|}^{t} |u|\varphi dx\geq 0
\end{equation}
and
\begin{equation}\label{Kato4}
\lim_{\delta\rightarrow 0}  \int_{\R^{3}} g_{\e_{n}}(x, |u_{n}|^{2})\frac{|u_{n}|^{2}}{u_{\delta,n}}\varphi dx=\int_{\R^{3}} g_{\e_{n}}(x, |u_{n}|^{2}) |u_{n}|\varphi dx.
\end{equation}
Putting together \eqref{Kato1}, \eqref{Kato2}, \eqref{KatoP}, \eqref{Kato3} and \eqref{Kato4} we can deduce that
\begin{align*}
\iint_{\R^{6}} \frac{(|u_{n}(x)|-|u_{n}(y)|)(\varphi(x)-\varphi(y))}{|x-y|^{3+2s}} dx dy+\int_{\R^{3}} V_{0}|u_{n}|\varphi dx\leq 
\int_{\R^{3}} g_{\e_{n}}(x, |u_{n}|^{2}) |u_{n}|\varphi dx
\end{align*}
for any $\varphi\in C^{\infty}_{c}(\R^{3}, \R)$ such that $\varphi\geq 0$, that is $|u_{n}|$ is a weak subsolution to \eqref{Kato0}.
Now, it is clear that $v_{n}=|u_{n}|(\cdot+\tilde{y}_{n})$ solves
\begin{equation}\label{Pkat}
(-\Delta)^{s} v_{n} + V_{0}v_{n}\leq g(\e_{n} x+\e_{n}\tilde{y}_{n}, v_{n}^{2})v_{n} \mbox{ in } \R^{3}. 
\end{equation}
Let us denote by $z_{n}\in H^{s}(\R^{3}, \R)$ the unique solution to
\begin{equation}\label{US}
(-\Delta)^{s} z_{n} + V_{0}z_{n}=g_{n} \mbox{ in } \R^{3},
\end{equation}
where
$$
g_{n}:=g(\e_{n} x+\e_{n}\tilde{y}_{n}, v_{n}^{2})v_{n}\in L^{r}(\R^{3}, \R) \quad \forall r\in [2, \infty].
$$
Since \eqref{UBu} yields $\|v_{n}\|_{L^{\infty}(\R^{3})}\leq C$ for all $n\in \mathbb{N}$, by interpolation we know that $v_{n}\rightarrow v$ strongly converges in $L^{r}(\R^{3}, \R)$ for all $r\in (2, \infty)$, for some $v\in L^{r}(\R^{3}, \R)$, and from the growth assumptions on $f$, we can see that also $g_{n}\rightarrow  f(v^{2})v$ in $L^{r}(\R^{3}, \R)$ and $\|g_{n}\|_{L^{\infty}(\R^{3})}\leq C$ for all $n\in \mathbb{N}$.
In view of \cite{FQT}, we deduce that $z_{n}=\mathcal{K}*g_{n}$, where $\mathcal{K}$ is the Bessel kernel, and arguing as in \cite{AM}, we obtain that $|z_{n}(x)|\rightarrow 0$ as $|x|\rightarrow \infty$ uniformly with respect to $n\in \mathbb{N}$.
Since $v_{n}$ satisfies \eqref{Pkat} and $z_{n}$ solves \eqref{US}, by comparison it is easy to see that $0\leq v_{n}\leq z_{n}$ a.e. in $\R^{3}$ and for all $n\in \mathbb{N}$. Then we can conclude that $v_{n}(x)\rightarrow 0$ as $|x|\rightarrow \infty$ uniformly with respect to $n\in \mathbb{N}$.
\end{proof}

\noindent
Now we are ready to give the proof of the main result of this paper.
\begin{proof}[Proof of Theorem \ref{thm1}]
Let $\delta>0$ be such that $M_{\delta}\subset \Lambda$, and we show that there exists  $\hat{\e}_{\delta}>0$ such that for any $\e\in (0, \hat{\e}_{\delta})$ and any solution $u_{\e}\in \widetilde{\mathcal{N}}_{\e}$ of \eqref{MPe} we have
\begin{equation}\label{Ua}
\|u_{\e}\|_{L^{\infty}(\R^{3}\setminus \Lambda_{\e})}<t_{a}.
\end{equation}
Assume by contradiction that for some sequence $\e_{n}\rightarrow 0$ we can obtain  $u_{n}:=u_{\e_{n}}\in \widetilde{\mathcal{N}}_{\e_{n}}$ such that
\begin{equation}\label{AbsAFF}
\|u_{n}\|_{L^{\infty}(\R^{3}\setminus \Lambda_{\e})}\geq t_{a}.
\end{equation}
Since $J_{\e_{n}}(u_{n})\leq c_{V_{0}}+h_{1}(\e_{n})$, we can argue as in the first part of Lemma \ref{prop3.3} to see that $J_{\e_{n}}(u_{n})\rightarrow c_{V_{0}}$.
Using Lemma \ref{prop3.3} there exists $(\tilde{y}_{n})\subset \R^{3}$ such that $\e_{n}\tilde{y}_{n}\rightarrow y_{0}$ for some $y_{0} \in M$. 
Now, we can find $r>0$ such that, for some subsequence still denoted by itself, we obtain $B_{r}(\tilde{y}_{n})\subset \Lambda$ for all $n\in \mathbb{N}$.
Therefore $B_{\frac{r}{\e_{n}}}(\tilde{y}_{n})\subset \Lambda_{\e_{n}}$ $n\in \mathbb{N}$. As a consequence 
$$
\R^{3}\setminus \Lambda_{\e_{n}}\subset \R^{3} \setminus B_{\frac{r}{\e_{n}}}(\tilde{y}_{n}) \mbox{ for any } n\in \mathbb{N}.
$$ 
In view of Lemma \ref{moser}, there exists $R>0$ such that 
$$
v_{n}(x)<t_{a} \mbox{ for } |x|\geq R, n\in \mathbb{N},
$$ 
where $v_{n}(x)=|u_{n}|(x+ \tilde{y}_{n})$. 
Hence $|u_{n}(x)|<t_{a}$ for any $x\in \R^{3}\setminus B_{R}(\tilde{y}_{n})$ and $n\in \mathbb{N}$. Then there exists $\nu \in \mathbb{N}$ such that for any $n\geq \nu$ and $r/\e_{n}>R$ it holds 
$$
\R^{3}\setminus \Lambda_{\e_{n}}\subset \R^{3} \setminus B_{\frac{r}{\e_{n}}}(\tilde{y}_{n})\subset \R^{3}\setminus B_{R}(\tilde{y}_{n}).
$$ 
Then $|u_{n}(x)|<t_{a}$ for any $x\in \R^{3}\setminus \Lambda_{\e_{n}}$ and $n\geq \nu$, and this contradicts \eqref{AbsAFF}.

Let $\tilde{\e}_{\delta}>0$ be given by Theorem \ref{multiple} and we set $\e_{\delta}=\min\{\tilde{\e}_{\delta}, \hat{\e}_{\delta} \}$. Applying Theorem \ref{multiple} we obtain $cat_{M_{\delta}}(M)$ nontrivial solutions to \eqref{MPe}.
If $u\in \h$ is one of these solutions, then $u\in \widetilde{\mathcal{N}}_{\e}$, and in view of \eqref{Ua} and the definition of $g$ we can infer that $u$ is also a solution to \eqref{MPe}. Observing that $\hat{u}_{\e}(x)=u_{\e}(x/\e)$ is a solution to (\ref{P}), we can deduce that \eqref{P} has at least $cat_{M_{\delta}}(M)$ nontrivial solutions.
Finally, we study the behavior of the maximum points of  $|\hat{u}_{n}|$. Take $\e_{n}\rightarrow 0$ and $(u_{n})$ a sequence of solutions to \eqref{MPe}. In view of $(g_1)$, there exists $\gamma\in (0,t_{a})$ such that
\begin{align}\label{4.18HZ}
g_{\e}(x, t^{2})t^{2}\leq \frac{V_{0}}{2}t^{2}, \mbox{ for all } x\in \R^{3}, |t|\leq \gamma.
\end{align}
Using a similar discussion as above, we can take $R>0$ such that
\begin{align}\label{4.19HZ}
\|u_{n}\|_{L^{\infty}(B^{c}_{R}(\tilde{y}_{n}))}<\gamma.
\end{align}
Up to a subsequence, we may also assume that
\begin{align}\label{4.20HZ}
\|u_{n}\|_{L^{\infty}(B_{R}(\tilde{y}_{n}))}\geq \gamma.
\end{align}
Indeed, if \eqref{4.20HZ} is not true, we get $\|u_{n}\|_{L^{\infty}(\R^{3})}< \gamma$, and it follows from $J_{\e_{n}}'(u_{n})=0$, \eqref{4.18HZ} and Lemma \ref{DI} that 
\begin{align*}
[|u_{n}|]^{2}+\int_{\R^{3}}V_{0}|u_{n}|^{2}dx&\leq \|u_{n}\|^{2}_{\e_{n}}+\int_{\R^{3}} \phi_{|u_{n}|}^{t}|u_{n}|^{2}dx\\
&=\int_{\R^{3}} g_{\e_{n}}(x, |u_{n}|^{2})|u_{n}|^{2}\,dx\\
&\leq \frac{V_{0}}{2}\int_{\R^{3}}|u_{n}|^{2}\, dx
\end{align*}
which implies that $\||u_{n}|\|_{H^{s}(\R^{3})}=0$, that is a contradiction. Then \eqref{4.20HZ} holds.\\
Using \eqref{4.19HZ} and \eqref{4.20HZ}, we can infer that the maximum points $p_{n}$ of $|u_{n}|$ belong to $B_{R}(\tilde{y}_{n})$, that is $p_{n}=\tilde{y}_{n}+q_{n}$ for some $q_{n}\in B_{R}$. Recalling that the associated solution of \eqref{P} is of the form $\hat{u}_{n}(x)=u_{n}(x/\e_{n})$, we can see that a maximum point $\eta_{\e_{n}}$ of $|\hat{u}_{n}|$ is $\eta_{\e_{n}}=\e_{n}\tilde{y}_{n}+\e_{n}q_{n}$. Since $q_{n}\in B_{R}$, $\e_{n}\tilde{y}_{n}\rightarrow y_{0}$ and $V(y_{0})=V_{0}$, from the continuity of $V$ we can conclude that
$$
\lim_{n\rightarrow \infty} V(\eta_{\e_{n}})=V_{0}.
$$
Finally, we give an estimate on the decay of $|\hat{u}_{n}|$.
Invoking Lemma $4.3$ in \cite{FQT}, we can find a function $w$ such that 
\begin{align}\label{HZ1}
0<w(x)\leq \frac{C}{1+|x|^{3+2s}},
\end{align}
and
\begin{align}\label{HZ2}
(-\Delta)^{s} w+\frac{V_{0}}{2}w\geq 0 \mbox{ in } \R^{3}\setminus B_{R_{1}} 
\end{align}
for some suitable $R_{1}>0$. Using Lemma \ref{moser}, we know that $v_{n}(x)\rightarrow 0$ as $|x|\rightarrow \infty$ uniformly in $n\in \mathbb{N}$, so there exists $R_{2}>0$ such that
\begin{equation}\label{hzero}
h_{n}=g(\e_{n}x+\e_{n}\tilde{y}_{n}, v_{n}^{2})v_{n}\leq \frac{V_{0}}{2}v_{n}  \mbox{ in } B_{R_{2}}^{c}.
\end{equation}
Let us denote by $w_{n}$ the unique solution to 
$$
(-\Delta)^{s}w_{n}+V_{0}w_{n}=h_{n} \mbox{ in } \R^{3}.
$$
Then $w_{n}(x)\rightarrow 0$ as $|x|\rightarrow \infty$ uniformly in $n\in \mathbb{N}$, and by comparison $0\leq v_{n}\leq w_{n}$ in $\R^{3}$. Moreover, in view of \eqref{hzero} and $\phi_{|w_{n}|}^{t}\geq 0$, it holds
\begin{align*}
(-\Delta)^{s}w_{n}+\frac{V_{0}}{2}w_{n}\leq h_{n}-\frac{V_{0}}{2}w_{n}\leq 0 \mbox{ in } B_{R_{2}}^{c}.
\end{align*}
Choose $R_{3}=\max\{R_{1}, R_{2}\}$ and we set 
\begin{align}\label{HZ4}
c=\inf_{B_{R_{3}}} w>0 \mbox{ and } \tilde{w}_{n}=(b+1)w-c w_{n}.
\end{align}
where $b=\sup_{n\in \mathbb{N}} \|w_{n}\|_{L^{\infty}(\R^{3})}<\infty$. 
Our goal is to show that 
\begin{equation}\label{HZ5}
\tilde{w}_{n}\geq 0 \mbox{ in } \R^{3}.
\end{equation}
Firstly, we observe that
\begin{align}
&\lim_{|x|\rightarrow \infty} \sup_{n\in \mathbb{N}}\tilde{w}_{n}(x)=0,  \label{HZ0N} \\
&\tilde{w}_{n}\geq bc+w-bc>0 \mbox{ in } B_{R_{3}} \label{HZ0},\\
&(-\Delta)^{s} \tilde{w}_{n}+\frac{V_{0}}{2}\tilde{w}_{n}\geq 0 \mbox{ in } \R^{3}\setminus B_{R_{3}} \label{HZ00}.
\end{align}
Now, we argue by contradiction, and we assume that there exists a sequence $(\bar{x}_{j, n})\subset \R^{3}$ such that 
\begin{align}\label{HZ6}
\inf_{x\in \R^{3}} \tilde{w}_{n}(x)=\lim_{j\rightarrow \infty} \tilde{w}_{n}(\bar{x}_{j, n})<0. 
\end{align}
From (\ref{HZ0N}), we can deduce that $(\bar{x}_{j, n})$ is bounded, and, up to subsequence, we may assume that there exists $\bar{x}_{n}\in \R^{3}$ such that $\bar{x}_{j, n}\rightarrow \bar{x}_{n}$ as $j\rightarrow \infty$. 
Thus, (\ref{HZ6}) yields
\begin{align}\label{HZ7}
\inf_{x\in \R^{3}} \tilde{w}_{n}(x)= \tilde{w}_{n}(\bar{x}_{n})<0.
\end{align}
Using the minimality of $\bar{x}_{n}$ and the representation formula for the fractional Laplacian (see Lemma 3.2 in \cite{DPV}), we can see that 
\begin{align}\label{HZ8}
(-\Delta)^{s}\tilde{w}_{n}(\bar{x}_{n})=\frac{c_{3, s}}{2} \int_{\R^{3}} \frac{2\tilde{w}_{n}(\bar{x}_{n})-\tilde{w}_{n}(\bar{x}_{n}+\xi)-\tilde{w}_{n}(\bar{x}_{n}-\xi)}{|\xi|^{3+2s}} d\xi\leq 0.
\end{align}
Taking into account (\ref{HZ0}) and (\ref{HZ6}), we obtain that $\bar{x}_{n}\in \R^{3}\setminus B_{R_{3}}$.
This together with (\ref{HZ7}) and (\ref{HZ8}) imply 
$$
(-\Delta)^{s} \tilde{w}_{n}(\bar{x}_{n})+\frac{V_{0}}{2}\tilde{w}_{n}(\bar{x}_{n})<0,
$$
which contradicts (\ref{HZ00}).
Thus (\ref{HZ5}) holds, and using (\ref{HZ1}) and $v_{n}\leq w_{n}$ we get
\begin{align*}
0\leq v_{n}(x)\leq w_{n}(x)\leq \frac{(b+1)}{c}w(x)\leq \frac{\tilde{C}}{1+|x|^{3+2s}} \mbox{ for all } n\in \mathbb{N}, x\in \R^{3},
\end{align*}
for some constant $\tilde{C}>0$. 
Therefore, recalling the definition of $v_{n}$, we can see that  
\begin{align*}
|\hat{u}_{n}|(x)&=|u_{n}|\left(\frac{x}{\e_{n}}\right)=v_{n}\left(\frac{x}{\e_{n}}-\tilde{y}_{n}\right) \\
&\leq \frac{\tilde{C}}{1+|\frac{x}{\e_{n}}-\tilde{y}_{n}|^{3+2s}} \\
&=\frac{\tilde{C} \e_{n}^{3+2s}}{\e_{n}^{3+2s}+|x- \e_{n} \tilde{y}_{n}|^{3+2s}} \\
&\leq \frac{\tilde{C} \e_{n}^{3+2s}}{\e_{n}^{3+2s}+|x-\eta_{\e_{n}}|^{3+2s}}.
\end{align*}
This ends the proof of Theorem \ref{thm1}.
\end{proof}

\smallskip
\noindent {\bf Acknowledgements.}
The author would like to thank the anonymous referee for her/his careful reading of the manuscript and valuable  suggestions that improved the presentation of the paper.

\end{document}